\documentclass[11pt,a4paper]{article}
\usepackage[linesnumbered,ruled]{algorithm2e}
\usepackage{titlesec}
\SetAlCapNameFnt{\small }
\SetAlCapFnt{\small }
\usepackage[title]{appendix}
\usepackage{soul}
\usepackage{amsmath,amssymb,amsbsy,amsthm,calc,cases,enumerate,url,framed,fancyhdr,lipsum,tikz,enumitem,natbib,etoolbox,bbm}
\usepackage{graphicx}
\usepackage[pageanchor,hypertexnames=false]{hyperref}
\usepackage{minitoc}
\usepackage{multirow} 
\usepackage{tabularx}

\setlength{\textwidth}{\paperwidth-4cm}
\setlength{\topmargin}{-1.5cm}
\setlength{\textheight}{\paperheight-5cm}
\setlength{\footskip}{36pt}

\setlength{\oddsidemargin}{(\paperwidth-\textwidth)/2-1in} 
\setlength{\evensidemargin}{(\paperwidth-\textwidth)/2-1in} 
\setlength{\parindent}{0cm}
\setlength{\parskip}{10pt plus 1pt minus 1pt}


\setlength{\abovedisplayskip}{0pt}
\setlength{\belowdisplayskip}{0pt}
\setlength{\abovedisplayshortskip}{0pt}
\setlength{\belowdisplayshortskip}{0pt}


\newcommand{\unparskip}{\vspace{-\parskip}}

\titlespacing\section{0pt}{12pt plus 4pt minus 2pt}{0pt plus 2pt minus 2pt}
\titlespacing\subsection{0pt}{12pt plus 4pt minus 2pt}{0pt plus 2pt minus 2pt}
\titlespacing\subsubsection{0pt}{12pt plus 4pt minus 2pt}{0pt plus 2pt minus 2pt}

\newtheorem{theorem}{Theorem}
\newtheorem{corollary}{Corollary}
\newtheorem{lemma}{Lemma}%
\usepackage{bbm}
\usepackage{stmaryrd}
\newtheorem{proposition}{Proposition}%
\newtheorem{assumption}{Assumption}%


\BeforeBeginEnvironment{proof}{\unparskip}
\BeforeBeginEnvironment{proof_outline}{\unparskip}
  
\BeforeBeginEnvironment{theorem}{\vspace{0.5\parskip}}
\BeforeBeginEnvironment{corollary}{\vspace{0.5\parskip}}
\BeforeBeginEnvironment{lemma}{\vspace{0.5\parskip}}
\BeforeBeginEnvironment{proposition}{\vspace{0.5\parskip}}
  
\BeforeBeginEnvironment{thm}{\vspace{0.5\parskip}}
\BeforeBeginEnvironment{cor}{\vspace{0.5\parskip}}
\BeforeBeginEnvironment{prop}{\vspace{0.5\parskip}}
\BeforeBeginEnvironment{lem}{\vspace{0.5\parskip}}

\newcommand{\iid}{\overset{\mathrm{iid}}{\sim}}

\newcommand{\R}{\mathbb{R}}

\newcommand{\dotcup}

\usepackage{xr}

\theoremstyle{remark}
\newtheorem{example}{Example}%
\theoremstyle{remark}
%


\usepackage[normalem]{ulem}

\begin{document}

\title{Asymptotics for parametric martingale posteriors}

\author{Edwin Fong$^{1,}$\thanks{Corresponding author. Email: chefong@hku.hk} \ and Andrew Yiu$^{2}$  \\ \\
$^1$Department of Statistics and Actuarial Science, University of Hong Kong\\
$^2$Department of Statistics, University of Oxford
}
\date{}

\maketitle
\vspace{-5mm}

\begin{abstract}
The martingale posterior framework is a generalization of Bayesian inference where one elicits a sequence of one-step ahead predictive densities instead of the likelihood and prior. 
Posterior sampling then involves the imputation of unseen observables, and can then be carried out in an expedient and parallelizable manner using {predictive resampling} without requiring Markov chain Monte Carlo. Recent work has investigated the use of plug-in parametric predictive densities, combined with stochastic gradient descent, to specify a parametric martingale posterior. This paper investigates the asymptotic properties of this class of parametric martingale posteriors. In particular, two central limit theorems based on martingale limit theory are introduced and applied. The first is a {predictive} central limit theorem, which enables a significant acceleration of the predictive resampling scheme through a hybrid sampling algorithm based on a normal approximation. The second is a Bernstein-von Mises result, which is novel for martingale posteriors, and provides methodological guidance on attaining desirable frequentist properties.
We demonstrate the utility of the theoretical results in simulations and a real data example. 
\end{abstract}


\section{Introduction}

\subsection{Martingale posteriors}
The martingale posterior \citep{Fong2023} is a generalization of the Bayesian posterior which places {prediction} at the heart of Bayesian inference. The key insight is the equivalence between posterior uncertainty on the parameter of interest and the predictive uncertainty of yet-to-be seen observables, which allows the elicitation of a sequence of one step ahead predictive densities to replace the usual likelihood and prior construction of a Bayesian model. 
Given observations $Y_{1:n}$, posterior computation now involves the following sequential imputation
\begin{align*}
    Y_{n+1} \sim p(\cdot \mid Y_{1:n}), \quad Y_{n+2}\sim p(\cdot \mid Y_{1:n+1}), \quad \ldots  \quad Y_{N} \sim p(\cdot \mid Y_{n+1:N-1}),
\end{align*}
taking $N \to \infty$, where $p(\cdot \mid Y_{1:i})$ is the one step ahead predictive density conditional on the first $i$ observations. The martingale posterior is then the distribution of the parameter of interest computed as a functional of the limiting imputed population, which we write informally as $\theta(Y_{1:\infty})$. 

One key benefit of the martingale posterior is the ability to work with a much larger class of models for Bayesian inference. These include conditionally identically distributed sequences \citep{Berti2004, Fong2023}, quantile estimates \citep{Fong2024}, and plug-in parametric predictive densities \citep{Walker2022,Holmes2023}, the last of which forms the focus of this paper. Surprisingly, the sequence of predictives may often be elicited in a completely prior-free manner, allowing for noninformative Bayesian inference without a prior distribution. 
In addition to gains in modelling flexibility, posterior computation is also drastically different and allows for significant gains in computation speed. The sequential imputation scheme, also known as {predictive resampling}, relies on recursive simulation and updating of the predictive density with a suitable truncation point $N$, removing the need for Markov chain Monte Carlo. This sampling scheme can also be entirely parallelized and utilize modern graphics processing units, and is free of the convergence challenges faced by Markov chain Monte Carlo.

This paper focuses on developing asymptotic theory for the parametric martingale posterior.  In particular, we will show two central limit theorems under different asymptotic regimes. The first will involve predictive asymptotics, where we take the imputed population size to infinity. The second will consider frequentist asymptotics, where the number of independent and identically distributed observations grows to infinity, as in the Bernstein-von Mises theorem. We will show that the theoretical results will have practical impacts on accelerating posterior computation and on attaining frequentist coverage.

\subsection{Related works}
There has been a recent resurgence in direct focus on the predictive distribution within Bayesian inference, including works such as \cite{Fortini2020,Berti2021, Fong2023} and references within.
The parametric Bayesian bootstrap is introduced and explored in 
\cite{Walker2022,Holmes2023, Wang2024}, with an extension to mixture models in \cite{Cui2023}.  Recently, \cite{Fortini2024} also considers the parametric martingale posterior for logistic regression, and \cite{Garelli2024} considers the predictive asymptotics of  parametric predictives based on a mean and variance estimate.
Our work unifies the examples in these works, and we will provide a concrete suggestion on an appropriate learning rule for good frequentist behaviour, which is missing from the literature. 

Predictive central limit theorems have been previously explored in the literature by works such as \cite{Fortini2020,Berti2021,Fortini2023}. In particular, \cite{Fortini2020} and \cite{Favaro2024} utilize a result of \cite{Crimaldi2009}
to construct asymptotic credible intervals for Newton's algorithm, which can also be framed as a nonparametric martingale posterior.
\cite{Fong2024} also utilizes a central limit theorem to carry out approximate posterior sampling from a nonparametric quantile martingale posterior.
Closest to our first result to come is the predictive central limit theorem for the logistic regression example provided in \cite{Fortini2024}, which we will extend to more general settings shortly. More generally, the parametric martingale posterior is closely connected to the parametric bootstrap \citep{Efron1971}, the Bayesian bootstrap \citep{Rubin1981}, as well as generalized Bayesian methods \citep{Bissiri2016, Knoblauch2022}.

\section{Methodology}

In this section, we introduce a specific form of the parametric martingale posterior distribution, where the sequence of predictive densities is parametric. Predictive sequences of this type have been investigated in \cite{Walker2022,Holmes2023,Fortini2024, Garelli2024}, and the predictive sequence below encompasses them in a sense. Let $\{p_\theta(y): \theta \in \Theta\}$ denote a family of probability density functions on $y \in \mathcal{Y}$, which will serve as our predictive density. A classic example is the normal family with fixed variance, where $p_\theta(y) = \mathcal{N}(y \mid \theta,\sigma^2)$, where $\mathcal{Y} = \R$, $\Theta = \R$ and $\sigma^2$ is a fixed constant. For ease of exposition, we begin with assuming $\Theta \subseteq \R$, with the trivial extension to the multivariate case provided in Section \ref{sec:mv} and Section \ref{sec:app_mv} of the Appendix. For the remainder of the paper, we make the following assumptions on $\log p_\theta(y)$, which is the score function of the model. 
\begin{assumption}\label{as:existence}
For each $\theta \in \Theta \subseteq \R$, we require the following, where expectations are understood to be over $Y \sim p_\theta$.
The score function $s(\theta,y)= \partial\log p_\theta(y) /\partial \theta$ exists, is finite, and has mean $0$, that is 
    $
    E_\theta\left\{s(\theta,Y) \right\} = 0.
    $
Furthermore, the score function is square-integrable, that is 
    $\mathcal{I}(\theta) =  E_\theta\left\{s(\theta,Y)^2\right\} < \infty,$
   where $\mathcal{I}(\theta)$ is the Fisher information. Finally, we require that $\mathcal{I}(\cdot)$ is continuous at $\theta$ and satisfies  $\mathcal{I}(\theta)> 0$.
\end{assumption}
Suppose we have observed $Y_{1:n}$ drawn from $p_{\theta^*}$, where $\theta^* \in \R$ is some true underlying value. We will discuss this assumption of model well-specification in a later section. Now consider the plug-in predictive density $p_n(y) = p_{\theta_n}(y)$, which will act as our starting point for the martingale posterior. Here, $\theta_n$ is an estimate of $\theta$ computed from $Y_{1:n}$, e.g. the maximum likelihood estimate. We now introduce the recursive update which will let us impute the remainder of the population.

Consider the random sequence of predictive densities, $\left(p_{\theta_N}(y)\right)_{N = n+1,n+2,\ldots}$ where $Y_{N+1} \sim p_{\theta_N}(\cdot)$ and the parameter estimate satisfies
\begin{equation}\label{eq:sgd}
\theta_{N} = \theta_{N-1} + N^{-1} \,\mathcal{I}(\theta_{N-1})^{-1} \, s(\theta_{N-1},Y_{N})
\end{equation}
for all $N > n$. Crucially, the learning rate of $N^{-1}$ ensures we have
\begin{align}\label{eq:alpha}
\sum_{N = n+1}^\infty  N^{-1}= \infty, \quad \sum_{N= n+1}^\infty N^{-2} < \infty,
\end{align}
as is standard in stochastic approximation. All of the results in this paper hold if we replace $N^{-1}$ with a general sequence $\alpha_N$, as long as the above is satisfied. The update (\ref{eq:sgd}) is  considered for logistic regression in \cite{Fortini2024} without the Fisher information and in the general case in \cite{Walker2022,Holmes2023,Wang2024}.

The above can be interpreted as the online learning rule for the sequence of the predictive densities and has a clear connection to stochastic approximation \citep{Lai2003}, also referred to as stochastic gradient descent.
 We can thus leverage tools from the rich stochastic approximation literature to help study the implied martingale posterior. The key difference with the traditional stochastic approximation setup is that the data $(Y_{N})_{N >n}$ is imputed, instead of being independently and identically distributed from $p_{\theta^*}$. 
The preconditioning of the gradient by the inverse Fisher information is also akin to the Hessian matrix used in Newton's method for optimization, and is also termed the natural gradient in the machine learning literature \citep{Martens2020}.
This deliberate choice will have ramifications for the asymptotic variance later.

Define the filtration $\mathcal{F}_N = \sigma(Y_{n+1},\ldots,Y_{N})$, where $Y_{1:n}$ is considered fixed for now. It is clear that $\theta_N$ is a martingale, as we have
$E\left\{s(\theta_N,Y_{N+1})\mid \mathcal{F}_N\right\} = 0$
by assumption. Under appropriate assumptions, which will be investigated in detail in the next section, we then have that $\sup_N E\left(|\theta_N|\right)< \infty$, so $\theta_N$ is a martingale adapted to $\mathcal{F}_N$ and bounded in $L_1$. From Doob's martingale convergence theorem \citep{Doob1953}, there exists a finite $\theta_\infty \in \R$ such that
$\theta_N \to \theta_\infty$
almost surely. The distribution of the random variable $\theta_\infty$ conditional on $Y_{1:n}$ is then termed the {parametric martingale posterior}.

Up until now, we have omitted discussion on the initial estimate $\theta_n$. 
As this work will investigate the frequentist properties of the parametric martingale posterior, the results will depend on the chosen estimate $\theta_n$.
In particular, we could enforce the same update (\ref{eq:sgd})
for $N = 1,\ldots, n$ where $\theta_0 \in \R$ is an initial value, which is a form of coherency as the estimation matches predictive resampling. 
As discussed in \cite{Walker2022}, one can then view Bayesian inference as applying update (\ref{eq:sgd}) for independent and identically distributed observations $Y_{1:n}$, then continuing the same learning scheme with imputed data $Y_{n+1:\infty}$\, from $p_{\theta_N}$ once we are depleted of real observations. 
Alternatively, it may be simpler and acceptable to utilize a well-understood estimator such as the maximum likelihood estimate for $\theta_n$, as long as it is asymptotically equivalent to using (\ref{eq:sgd}).

\section{Predictive central limit theorem}
\subsection{Predictive asymptotics}
We begin our study with consideration of {predictive} asymptotics, which we distinguish from {frequentist} asymptotics. In this context, we treat $Y_{1:n}$ as fixed, and consider the convergence of $\theta_N$ to $\theta_\infty$
as $N \to \infty$, which was also considered by \cite{Doob1953}. In other words, we want to study predictive resampling as our imputed population $Y_{n+1:N}$ grows with $N \to \infty$. We will dedicate some time to emphasize this rather unique type of asymptotics that is inherently Bayesian, and this type of asymptotics is only made explicit through the predictive view point.

In this section, we will consider the limiting distribution of $\left({\theta_\infty - \theta_N}\right)$, scaled appropriately,
as $N \to \infty$, utilizing martingale limit theory \citep{Hall2014}. Intuitively, the above will quantify how much uncertainty will remain, as well as the distribution of this uncertainty, when we only see a finite $N$ members of the population, instead of the full infinite population. Practically, one can also imagine studying the convergence rate of the predictive resampling scheme as $\theta_N \to \theta_\infty$ almost surely. However, we must emphasize that $\theta_\infty$ is {random}, so we are considering convergence of a random object $\theta_N$ to another random object $\theta_\infty$. An unintuitive result will be that the limiting law of $\theta_\infty$ will have random components, which is in stark contrast to the frequentist asymptotics. As a result, we argue that predictive asymptotics are only helpful for the {computational} aspect of the parametric martingale posterior, whereas a Bernstein-von Mises type result (as in Section \ref{sec:bvm}) is of greater statistical relevance.

Predictive asymptotics have previously been studied in \cite{Fortini2020, Fortini2024}, and they provide insightful interpretations of the above type of asymptotics. We consider our results as a generalization of theirs, with a tailoring to the parametric case. Interestingly, the assumptions are somewhat more tedious in the parametric case, as $\theta \in \Theta \subseteq \R$ may not have compact support.
This is in contrast to the nonparametric case, where the martingale constructed is usually the cumulative distribution function, which is bounded by 1. 
 Another  novelty we demonstrate is that quantification of the above distribution will actually allow us to justify a computational scheme which drastically accelerates posterior sampling, which will have strong practical benefits.

\subsection{Martingale central limit theorem}
 To find the limiting distribution under predictive resampling, we now utilize the seminal work in martingale central limit theory \citep{Hall2014}, which draws strong analogies between independent and identically distributed variables to martingales.  However, while elegant, the assumptions for martingale central limit theorems can become quite technical, which is especially true in our case as our estimates $\theta_N$ may not lie in a compact set.  A discussion of this is also provided in \cite{Walker2022}.

For martingale central limit theorems to hold, we require that our martingale is bounded in $L_2$, that is
$\sup_N E\left(\theta_N^2\right) < \infty$. This is stronger than the bounded in $L_1$ condition required for the existence of $\theta_\infty$, and is equivalent to the bounded variance condition in traditional central limit theorems. However, as we will see shortly, a stronger Lindeberg condition is actually needed for a martingale central limit theorem to hold. To that end, we introduce the following set of assumptions which help control the martingale under predictive resampling.

\begin{assumption}\label{as:UI}
Let $Z_N$ denote the natural gradient, that is
\begin{align}\label{eq:Z}
    Z_N = \mathcal{I}(\theta_{N-1})^{-1}\, s(\theta_{N-1}, Y_N).
\end{align}
The sequence $\left(Z_N^2\right)_{N > n}$ is uniformly integrable under predictive resampling with (\ref{eq:sgd}) starting from the initial $\theta_n$.
\end{assumption}

The above object $Z_N$ is sometimes termed the natural gradient, and is also the efficient influence function for $\theta$ under well-specification of the parametric model \citep{van2000},  which we require to be square uniformly integrable. 
The above condition is then sufficient for a martingale version of Lindeberg's condition to hold. While the condition is easy to state, it may be challenging to check, which is typical of assumptions required for martingale central limit theorems \citep{Heyde1979}. This difficulty arises due to the fact that we are converging to a random $\theta_\infty$, which can lie anywhere in $\R$. As a result, we cannot always bound $Z_N$, where boundedness was previously utilized in the results of \cite{Fortini2020, Fortini2024}. Fortunately, we will introduce an easy to check assumption which holds for many choices of $p_\theta(y)$ as long as the learning rate satisfies (\ref{eq:alpha}). We postpone the verification of Assumption \ref{as:UI} until Section \ref{sec:UI}.

 We now show that Assumption \ref{as:UI} implies another key condition and that the martingale is bounded in $L_2$, which is required for the central limit theorem.
\begin{lemma}\label{lem:L2}
   Under Assumption \ref{as:UI}, we have that $\left(\mathcal{I}(\theta_{N-1})^{-1}\right)_{N > n}$ is uniformly integrable. Furthermore, $\theta_N$ is a martingale bounded in $L_2$, so  $\theta_N \to \theta_\infty$ almost surely.
\end{lemma}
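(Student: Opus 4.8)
The plan is to reduce everything to a single conditional second-moment identity for the natural gradient, after which both uniform integrability and $L_2$-boundedness fall out of standard facts. The key observation is that, conditional on $\mathcal{F}_{N-1}$, the imputed draw satisfies $Y_N \sim p_{\theta_{N-1}}$, so that by the definition of the Fisher information in Assumption \ref{as:existence},
\[
E\!\left(Z_N^2 \mid \mathcal{F}_{N-1}\right) = \mathcal{I}(\theta_{N-1})^{-2}\, E\!\left\{s(\theta_{N-1},Y_N)^2 \mid \mathcal{F}_{N-1}\right\} = \mathcal{I}(\theta_{N-1})^{-2}\,\mathcal{I}(\theta_{N-1}) = \mathcal{I}(\theta_{N-1})^{-1}.
\]
Thus $\mathcal{I}(\theta_{N-1})^{-1}$ is precisely a conditional expectation of the family $\left(Z_N^2\right)_{N>n}$, which is uniformly integrable by Assumption \ref{as:UI}. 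This is where the preconditioning by the inverse Fisher information earns its keep.

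For the first claim I would invoke the standard result that conditional expectations of a uniformly integrable family remain uniformly integrable: if $(X_N)$ is uniformly integrable and $(\mathcal{G}_N)$ are sub-$\sigma$-algebras, then $(E(X_N\mid\mathcal{G}_N))$ is uniformly integrable. The short argument writes, for $W_N = E(X_N\mid \mathcal{G}_N)$, the bound $E\{W_N\,\Ind(W_N>K)\} = E\{X_N\,\Ind(W_N>K)\} \le E\{X_N\,\Ind(X_N>M)\} + M\,\Pr(W_N>K)$, and then controls the last term by Markov's inequality together with the $L_1$-boundedness implied by uniform integrability, sending $M$ and then $K$ to infinity. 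Applying this with $X_N = Z_N^2$, $\mathcal{G}_N = \mathcal{F}_{N-1}$, and the identity above yields uniform integrability of $\left(\mathcal{I}(\theta_{N-1})^{-1}\right)_{N>n}$.

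For the second claim, $\theta_N$ is already known to be a martingale from the zero-mean property of the score, so its increments $N^{-1}Z_N$ form a martingale difference sequence and are orthogonal in $L_2$. Since uniform integrability of $\left(Z_N^2\right)$ gives $C := \sup_{N}E(Z_N^2) < \infty$, orthogonality and the fact that $\theta_n$ is fixed under conditioning on $Y_{1:n}$ give
\[
E\!\left(\theta_N^2\right) = \theta_n^2 + \sum_{k=n+1}^{N} k^{-2}\,E\!\left(Z_k^2\right) \le \theta_n^2 + C\sum_{k=n+1}^{\infty} k^{-2} < \infty,
\]
where finiteness of the series is exactly (\ref{eq:alpha}). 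Hence $\sup_N E(\theta_N^2)<\infty$, and the almost sure convergence $\theta_N\to\theta_\infty$ follows from Doob's martingale convergence theorem, since boundedness in $L_2$ implies boundedness in $L_1$.

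The individual steps are routine; the only nontrivial input is the lemma that uniform integrability survives conditioning, and the conceptual point is recognizing that the identity $E(Z_N^2\mid\mathcal{F}_{N-1}) = \mathcal{I}(\theta_{N-1})^{-1}$ links Assumption \ref{as:UI} simultaneously to both conclusions. I expect no serious obstacle beyond stating that conditioning lemma cleanly and verifying the martingale-difference orthogonality that underpins the displayed expansion of $E(\theta_N^2)$.
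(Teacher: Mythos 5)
Your proposal is correct and follows essentially the same route as the paper: the identity $E(Z_N^2\mid\mathcal{F}_{N-1})=\mathcal{I}(\theta_{N-1})^{-1}$, uniform integrability inherited by conditional expectations, and the orthogonal martingale-difference expansion of $E(\theta_N^2)$ combined with summability of $\sum k^{-2}$. Your write-up is in fact slightly more careful than the paper's (you spell out the conditioning lemma and retain the $\theta_n^2$ term in the second-moment expansion), but there is no substantive difference in approach.
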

We are now equipped to state the main theorem of this section.
\begin{theorem}\label{th:pred_CLT}
For $X_N = \theta_N - \theta_{N-1}$, we define
$$
V_N^2 = \sum_{i=N+1}^\infty E\left(X_i^2 \mid \mathcal{F}_{i-1}\right), \quad s_N^2 = \sum_{i=N+1}^\infty E\left(X_i^2\right) = E\left\{(\theta_\infty - \theta_N)^2\right\}.
$$
Under Assumptions \ref{as:existence} and \ref{as:UI}, we have
$$
\frac{V_N^2}{s_N^2} \rightarrow \frac{\mathcal{I}(\theta_\infty)^{-1}}{E\left\{\mathcal{I}(\theta_\infty)^{-1}\right\}}
$$
almost surely, and 
\begin{equation}\label{eq:pred_CLT}
    V_N^{-1} \left(\theta_\infty - \theta_N \right) \to\mathcal{N}(0,1)
\end{equation}
in distribution.
\end{theorem}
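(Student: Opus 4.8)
The plan is to reduce the problem to the predictable quadratic variation of the tail sum $\theta_\infty - \theta_N = \sum_{i=N+1}^\infty X_i$ and then to invoke a stable martingale central limit theorem, self-normalising at the end to strip away the random limiting variance. The starting point is a direct computation of the conditional second moments. Conditional on $\mathcal{F}_{i-1}$ the imputed $Y_i$ is drawn from $p_{\theta_{i-1}}$, so the score identity built into Assumption~\ref{as:existence} gives $E\{s(\theta_{i-1},Y_i)^2 \mid \mathcal{F}_{i-1}\} = \mathcal{I}(\theta_{i-1})$. Since $X_i = i^{-1} Z_i$ with $Z_i$ as in \eqref{eq:Z}, this yields the explicit form
\begin{equation*}
E\rbr{X_i^2 \mid \mathcal{F}_{i-1}} = i^{-2}\,\mathcal{I}(\theta_{i-1})^{-1},
\end{equation*}
so that $V_N^2 = \sum_{i=N+1}^\infty i^{-2}\,\mathcal{I}(\theta_{i-1})^{-1}$ and $s_N^2 = \sum_{i=N+1}^\infty i^{-2}\,E\{\mathcal{I}(\theta_{i-1})^{-1}\}$.

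For the almost sure claim I would use a weighted-average (Toeplitz) argument. By Lemma~\ref{lem:L2} we have $\theta_{i-1} \to \theta_\infty$ almost surely, and the continuity and positivity of $\mathcal{I}$ in Assumption~\ref{as:existence} give $\mathcal{I}(\theta_{i-1})^{-1} \to \mathcal{I}(\theta_\infty)^{-1}$ almost surely. Because the weights $i^{-2}$ satisfy $\sum_{i>N} i^{-2} \sim N^{-1}$, the normalised numerator obeys $V_N^2 / \sum_{i>N} i^{-2} \to \mathcal{I}(\theta_\infty)^{-1}$ almost surely. For the denominator, Lemma~\ref{lem:L2} additionally supplies uniform integrability of $\mathcal{I}(\theta_{i-1})^{-1}$, so Vitali's theorem upgrades the almost sure convergence to $E\{\mathcal{I}(\theta_{i-1})^{-1}\} \to E\{\mathcal{I}(\theta_\infty)^{-1}\}$ and hence $s_N^2 / \sum_{i>N} i^{-2} \to E\{\mathcal{I}(\theta_\infty)^{-1}\}$. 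Dividing the two and cancelling the common $\sum_{i>N} i^{-2}$ factor delivers the stated limit.

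For the central limit theorem I would apply a stable martingale central limit theorem, for instance \citet[Theorem~3.2]{Hall2014}, to the triangular array $\{s_N^{-1} X_{N+j}\}_{j \ge 1}$. The conditional-variance condition holds by construction, as $s_N^{-2}\sum_{i>N} E(X_i^2 \mid \mathcal{F}_{i-1}) = V_N^2/s_N^2$ converges to the random limit $W = \mathcal{I}(\theta_\infty)^{-1}/E\{\mathcal{I}(\theta_\infty)^{-1}\}$ from the first part. The conditional Lindeberg condition is where Assumption~\ref{as:UI} enters: rewriting the event $\{|X_i| > \epsilon s_N\}$ as $\{|Z_i| > \epsilon s_N i\}$ and using $s_N i \ge s_N(N+1) \to \infty$ together with $s_N^{-2}\sum_{i>N} i^{-2} = O(1)$, the square-uniform integrability of $(Z_i^2)_{i>n}$ forces the truncated second moments to vanish uniformly in $i$, and hence their weighted sum to vanish. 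The array theorem then yields stable convergence $s_N^{-1}(\theta_\infty - \theta_N) \to \sqrt{W}\,\mathcal{N}(0,1)$, a scale mixture of normals whose random variance is exactly the limit $W$ identified above.

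The final and most delicate step is the self-normalisation by $V_N$ rather than $s_N$. Since $V_N$ depends on the imputed future through $\theta_{i-1}$ for $i > N$, it is not $\mathcal{F}_N$-measurable, so it cannot be treated as a deterministic scaling and absorbed into the array directly. The resolution is to exploit that the convergence just obtained is \emph{stable}, not merely in distribution: writing $V_N^{-1}(\theta_\infty - \theta_N) = (s_N/V_N)\cdot s_N^{-1}(\theta_\infty - \theta_N)$ and recalling $s_N/V_N \to W^{-1/2}$ almost surely from the first claim, stability permits multiplying the stably convergent sequence by the almost surely convergent factor $s_N/V_N$ to obtain $W^{-1/2}\sqrt{W}\,\mathcal{N}(0,1) = \mathcal{N}(0,1)$, which is \eqref{eq:pred_CLT}. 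I expect this interplay between stable convergence and self-normalisation to be the main obstacle, as it is precisely what cancels the random mixing variance $W$ and converts the mixture-of-normals limit into the clean standard normal.
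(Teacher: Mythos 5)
Your proposal is correct and follows essentially the same route as the paper: the same computation $E(X_i^2\mid\mathcal{F}_{i-1})=i^{-2}\mathcal{I}(\theta_{i-1})^{-1}$, the same Toeplitz weighted-average argument combined with uniform integrability (Vitali) for the ratio $V_N^2/s_N^2$, the same use of uniform integrability of $(Z_i^2)$ to verify the Lindeberg condition via the rewriting $\{|X_i|>\epsilon s_N\}=\{|Z_i|>\epsilon s_N i\}$ with $s_N^2 i^2\gtrsim N$, and the same exploitation of stable convergence to replace the deterministic norming $s_N$ by the random norming $V_N$. The only cosmetic difference is that you invoke the array form of the stable martingale CLT directly on $\{s_N^{-1}X_{N+j}\}_{j\ge1}$, whereas the paper packages this as a tail-sum corollary (\citet[Corollary 3.5]{Hall2014}, \citet[Exercise 6.7]{Hausler2015}); these are the same result.
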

\begin{proof}
We outline the proof, with details deferred to Section \ref{sec:app_predclt} of the Appendix.
   The proof is an application of a variant of the tail sum martingale central limit theorem, e.g. \citet[Corollary 3.5]{Hall2014} or \citet[Exercise 6.7]{Hausler2015}.   To summarize, Assumption \ref{as:UI} implies a conditional Lindeberg condition, and furthermore implies that the normalized variance $V_N^2/s_N^2$ converges almost surely. These two conditions are then sufficient for the martingale central limit theorem for tail sums to hold. 
\end{proof}
Many variants of the martingale central limit theorem exist, which we specialize for our use cases in Sections \ref{sec:app_tail} and \ref{sec:app_infmart} in the Appendix.  A key component of the above result is that the scaled limit of the normalizing variance $V_N^2$ is {random} and equal to $\mathcal{I}(\theta_\infty)^{-1}$, so the variance of the limiting distribution of $s_N^{-1}(\theta_\infty - \theta_N)$ is also random. Here, $s_N^{-1}$ is the deterministic rate, whereas the ratio $V_N^2/s_N^2$ plays the role of the random variance. For large $N$, the heuristic argument is
$$
V_N^2 \approx \mathcal{I}(\theta_\infty)^{-1} \sum_{i=N}^\infty i^{-2}, \quad s_N^2 \approx E\left\{\mathcal{I}(\theta_\infty)^{-1}\right\}\sum_{i=N}^\infty i^{-2},
$$
where $\mathcal{I}(\theta_\infty)^{-1}$ is random but $E\left\{\mathcal{I}(\theta_\infty)^{-1}\right\}$ is constant.
We will have $s_N^{-1}\propto {N}^{1/2}$, which is the usual parametric rate.

The random limit of $V_N$ is in stark contrast to classical central limit theorems, where the normalizing variance will usually converge to a constant.
The ability to normalize $(\theta_\infty - \theta_N)$ by $V_N$ and thus write the limit distribution in (\ref{eq:pred_CLT}) independently of $\theta_\infty$ is crucial for the practical use of Theorem \ref{th:pred_CLT} for approximate posterior sampling, as we will see shortly.
The key to this form of (\ref{eq:pred_CLT}) is precisely the stable convergence of martingale central limit theorems \citep[Chapter 3]{Hall2014} in the sense of \cite{Renyi1963}, where stable convergence is stronger than convergence in distribution and weaker than convergence in probability. The importance of stable convergence is also emphasized in \cite{Fortini2020,Fortini2024}. In the interest of space, we provide an extended discussion on stable convergence in Section \ref{sec:stab} of the Appendix, where it is needed for some of the technical proofs.

\subsection{Uniform integrability condition}\label{sec:UI}
Assumption \ref{as:UI} is unwieldy to work with in practice, as it is a condition on the entire predictive sampling procedure for $ N > n$.
Fortunately, we have the following helpful result which 
allows us to check Assumption \ref{as:UI} by only considering a single time step.
\begin{proposition}\label{prop:Z4}
    Let $Z(\theta,Y) = \mathcal{I}(\theta)^{-1}s(\theta, Y)$, where $Y \sim p_\theta$. Suppose there exists non-negative constants $B,C< \infty$ such that the following holds for all $\theta \in \Theta \subseteq \R$,
    \begin{align*}
    E_\theta\left\{Z(\theta,Y)^4\right\} \leq B + C \theta^4.
    \end{align*}
   Then Assumption \ref{as:UI} is satisfied. 
\end{proposition}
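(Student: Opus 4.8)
The plan is to reduce the trajectory-wide condition in Assumption \ref{as:UI} to the single-step hypothesis by establishing the stronger statement that $(Z_N^2)_{N>n}$ is bounded in $L^2$, i.e.\ $\sup_{N>n} E(Z_N^4) < \infty$; since boundedness in $L^p$ for any $p>1$ implies uniform integrability, this suffices. To control $E(Z_N^4)$ I would condition on $\mathcal{F}_{N-1}$ and use that, under predictive resampling, $Y_N \mid \mathcal{F}_{N-1} \sim p_{\theta_{N-1}}$, so that
$$
E\!\left(Z_N^4 \mid \mathcal{F}_{N-1}\right) = E_{\theta_{N-1}}\!\left\{Z(\theta_{N-1},Y)^4\right\} \leq B + C\,\theta_{N-1}^4 .
$$
Taking expectations gives $E(Z_N^4) \leq B + C\,E(\theta_{N-1}^4)$, so everything reduces to the fourth-moment bound $\sup_{N\geq n} E(\theta_N^4) < \infty$, which I expect to be the crux of the argument.

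To obtain this, I would set up a recursion for $u_N = E(\theta_N^4)$ from the update (\ref{eq:sgd}). Expanding $\theta_N^4 = (\theta_{N-1} + N^{-1}Z_N)^4$ and taking the conditional expectation given $\mathcal{F}_{N-1}$, the linear term vanishes because $E(Z_N \mid \mathcal{F}_{N-1}) = \mathcal{I}(\theta_{N-1})^{-1} E_{\theta_{N-1}}\{s(\theta_{N-1},Y)\} = 0$ by Assumption \ref{as:existence}; this cancellation of the $O(N^{-1})$ term is essential. The surviving terms involve $E(Z_N^2\mid\mathcal{F}_{N-1})$, $E(Z_N^3\mid\mathcal{F}_{N-1})$ and $E(Z_N^4\mid\mathcal{F}_{N-1})$ at orders $N^{-2}$, $N^{-3}$ and $N^{-4}$ respectively. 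Conditional Lyapunov (power-mean) inequalities bound the lower conditional moments by powers of the $L^4$ bound, e.g.\ $E(|Z_N|^3\mid\mathcal{F}_{N-1}) \leq (B+C\theta_{N-1}^4)^{3/4}$, and combining these with the elementary inequalities $(a+b)^r \leq a^r + b^r$ for $r\in(0,1)$, $\lvert\theta\rvert \leq 1 + \theta^4$ and $\theta^2 \leq 1 + \theta^4$ yields a bound of the form
$$
E\!\left(\theta_N^4 \mid \mathcal{F}_{N-1}\right) \leq (1 + a_N)\,\theta_{N-1}^4 + b_N ,
$$
where both $a_N$ and $b_N$ are $O(N^{-2})$ and hence summable by (\ref{eq:alpha}).

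Taking expectations gives the deterministic recursion $u_N \leq (1+a_N) u_{N-1} + b_N$ with $\sum_{k>n} a_k < \infty$ and $\sum_{k>n} b_k < \infty$, so a discrete Gr\"onwall argument yields $u_N \leq \big(u_n + \sum_{k>n} b_k\big)\prod_{k>n} (1+a_k) \leq \big(u_n + \sum_{k>n} b_k\big)\exp\!\big(\sum_{k>n} a_k\big) < \infty$, uniformly in $N$; here $u_n = \theta_n^4 < \infty$ since $\theta_n$ is fixed under predictive asymptotics. This establishes $\sup_{N} E(\theta_N^4) < \infty$, hence $\sup_{N} E(Z_N^4) < \infty$ and the uniform integrability of $(Z_N^2)_{N>n}$. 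The main obstacle is the fourth-moment recursion: the argument works only because the martingale (mean-zero score) structure removes the $O(N^{-1})$ multiplicative term, leaving summable $O(N^{-2})$ corrections that the learning rate (\ref{eq:alpha}) can absorb.
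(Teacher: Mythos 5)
Your proposal is correct and follows essentially the same route as the paper's proof: both reduce to $\sup_N E(\theta_N^4)<\infty$ via the conditional fourth-moment bound, kill the $O(N^{-1})$ term using the mean-zero score, control the lower conditional moments by H\"{o}lder/power-mean inequalities together with $(a+b)^p\leq a^p+b^p$ and $|\theta|,\theta^2\leq 1+\theta^4$, and close the recursion $E(\theta_N^4\mid\mathcal{F}_{N-1})\leq(1+D_1N^{-2})\theta_{N-1}^4+D_2N^{-2}$ with a product/exponential (discrete Gr\"{o}nwall) bound exploiting summability of $N^{-2}$. The only cosmetic difference is how the final Gr\"{o}nwall estimate is packaged; the substance is identical.
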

The intuition is that the above in combination with (\ref{eq:alpha}) implies $\sup_{N> n} E\left(Z_N^4\right) < \infty$, which is sufficient for the uniform integrability of $\left(Z_N^2\right)_{N >n}$. 
Although it is likely that bounding $E_\theta\left\{Z(\theta,Y)^{2+ \delta}\right\}$ for some $\delta > 0$  suffices, choosing $\delta = 2$ is particularly straightforward due to the recursive form of $\theta_N$. 
The above is relatively easy to check and is only dependent on the predictive model $p_\theta$; we now demonstrate this in the next examples.
\begin{example}\label{ex:UI_assump}
    Here we verify Assumption \ref{as:UI} when $Z(\theta,Y)$ is not bounded. Suppose our parametric model is the normal model with mean zero but unknown variance 
$p_\theta(y) = \mathcal{N}(0,\theta)$,
where $\theta \in \R^+$. The score function and Fisher information are respectively
\begin{equation*}
\begin{aligned}
s(\theta,y) &=  \frac{y^2 - \theta}{2\theta^2}, \quad 
\mathcal{I}(\theta) 
&= \frac{1}{2\theta^2}\cdot
\end{aligned} 
\end{equation*}
We then have $\mathcal{I}(\theta)^{-1}$ continuous in $\theta$ as needed. We highlight that $\mathcal{I}(\theta), \mathcal{I}(\theta)^{-1}$ and $Z(\theta,Y)$ are all unbounded, which is the cause of our difficulties. We can utilize Proposition \ref{prop:Z4} by computing $E\left\{Z(\theta,Y)^4\right\}$, where here we have
$Z(\theta,Y)^4=  \left(Y^2 - \theta\right)^4$.
Since $Y^2/\theta \sim \chi^2(1)$, which has a 4th central moment equal to 60, we have
\begin{align*}
E_\theta\left\{\left(Y^2 - \theta\right)^4\right\}&= 60\, \theta^4
\end{align*}
for all $\theta \in \R^+$. We can thus apply Proposition \ref{prop:Z4} with $B = 0$ and $C = 60$.\vspace{5mm}
\end{example}

\begin{example}\label{ex:t}
We now verify Assumption \ref{as:UI} for an example that is not in the exponential family, and in particular is heavy-tailed. Consider a location-scale Student-$t$ distribution with location $\theta$, scale $\tau$, and degrees of freedom $\nu$, which has the density
\begin{align*}
    p_\theta(y) = \frac{\Gamma\left(\frac{\nu + 1}{2}\right)}{\Gamma\left(\frac{\nu}{2}\right)\tau\,(\pi \nu)^{{1}/{2}} } \left\{1 + \frac{1}{\nu}\left(\frac{y -\theta}{\tau}\right)^2\right\}^{-\frac{\nu + 1}{2}}.
\end{align*}
For simplicity we assume $\tau = 1$. The update term can then be computed as
\begin{align*}
    Z(\theta,Y) =\frac{(\nu + 3)(Y-\theta)}{\nu + (Y-\theta)^2}
\end{align*}
which can be done using standard results, e.g. \cite{Lange1989}. We can then compute 
\begin{align*}
    Z(\theta,Y)^4 = \frac{(\nu+3)^4(Y-\theta)^4}{\left\{\nu + (Y- \theta)^2\right\}^4} \leq \frac{(\nu + 3)^4}{16\nu^2}
\end{align*}
where we have used the fact that $x^2/(\nu + x^2)^2 \leq (4\nu)^{-1}$ for $\nu > 0$.
Here, there is no dependence on $\theta$, so we can immediately apply Proposition \ref{prop:Z4} with $C = 0$. 
\end{example}

\subsection{Accelerating predictive resampling}
Predictive resampling with (\ref{eq:sgd}) requires the truncation at some finite $N$ in order for feasible computation, and while convergence appears relatively quick, the computational cost still grows linearly with $N$. 
The key idea of this section is that the form of Theorem \ref{th:pred_CLT}
allows us to approximate
\begin{align*}
    \theta_\infty \approx \theta_N + V_N\varepsilon,
\end{align*}
where $\varepsilon \sim \mathcal{N}(0,1)$, which will hopefully allow us to attain accurate sampling for a smaller value of $N$ and also approximate the gap between $\theta_\infty$ and $\theta_N$.

However, a minor difficulty is that we need a way of approximating the {random} variance $V_N^2$, which depends on the future, yet-to-be sampled values of $Y_{N+1:\infty}$. As the point is to avoid sampling $Y_{N+1:\infty}$ to save computation, we require an approximation, which is also discussed in \cite{Fortini2020}.  A suitable approximation is
\begin{align}\label{eq:pred_as_var}
    \widehat{V}_N^2 = \mathcal{I}(\theta_N)^{-1} \sum_{i = N}^\infty i^{-2},
\end{align}
where we have assumed that $\mathcal{I}(\theta_N)^{-1}$ is sufficiently close to $\mathcal{I}(\theta_\infty)^{-1}$, which seems reasonable given $\mathcal{I}(\theta)^{-1}$ is continuous.  As $\mathcal{I}(\theta_N)^{-1}$ is already being computed as the gradient preconditioner, there is no additional computation required. The following proposition justifies the above usage.
\begin{corollary}\label{corr:approx_variance}
    Under Assumptions \ref{as:existence} and \ref{as:UI}, we have that
    \begin{align*}
        \widehat{V}_N^{-1}\left(\theta_\infty - \theta_N\right)\to\mathcal{N}(0,1)
    \end{align*}
    in distribution.
\end{corollary}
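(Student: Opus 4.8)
The plan is to deduce the result directly from Theorem \ref{th:pred_CLT} via Slutsky's theorem, by showing that the explicit approximation $\widehat{V}_N^2$ is asymptotically equivalent to the true conditional-variance tail sum $V_N^2$. Concretely, I would write
$$
\widehat{V}_N^{-1}\left(\theta_\infty - \theta_N\right) = \frac{V_N}{\widehat{V}_N}\, V_N^{-1}\left(\theta_\infty - \theta_N\right),
$$
where the second factor converges in distribution to $\mathcal{N}(0,1)$ by Theorem \ref{th:pred_CLT}. Because the target limit of the ratio $V_N/\widehat{V}_N$ is the deterministic constant $1$, ordinary Slutsky (rather than the stable-convergence machinery emphasized elsewhere) is all that is required: it suffices to prove $\widehat{V}_N^2 / V_N^2 \to 1$ almost surely, which yields $V_N/\widehat{V}_N \to 1$ in probability and hence the product converges to $\mathcal{N}(0,1)$ in distribution.

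First I would obtain a closed form for $V_N^2$. With $X_i = \theta_i - \theta_{i-1} = i^{-1}\mathcal{I}(\theta_{i-1})^{-1}s(\theta_{i-1},Y_i)$ and $Y_i \sim p_{\theta_{i-1}}$ conditional on $\mathcal{F}_{i-1}$, Assumption \ref{as:existence} gives $E\{s(\theta_{i-1},Y_i)^2 \mid \mathcal{F}_{i-1}\} = \mathcal{I}(\theta_{i-1})$, so that $E(X_i^2 \mid \mathcal{F}_{i-1}) = i^{-2}\mathcal{I}(\theta_{i-1})^{-1}$ and therefore
$$
V_N^2 = \sum_{i=N+1}^\infty i^{-2}\,\mathcal{I}(\theta_{i-1})^{-1}.
$$
This exhibits $V_N^2$ as an $i^{-2}$-weighted tail sum of the sequence $\mathcal{I}(\theta_{i-1})^{-1}$, to be compared against $\widehat{V}_N^2 = \mathcal{I}(\theta_N)^{-1}\sum_{i=N}^\infty i^{-2}$.

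Next I would show that both $V_N^2$ and $\widehat{V}_N^2$ are equivalent to $\mathcal{I}(\theta_\infty)^{-1}\sum_{i=N}^\infty i^{-2}$. By Lemma \ref{lem:L2} we have $\theta_N \to \theta_\infty$ almost surely, and since $\mathcal{I}(\cdot)^{-1}$ is continuous (Assumption \ref{as:existence}) with $\theta_\infty$ finite, $\mathcal{I}(\theta_{i-1})^{-1} \to \mathcal{I}(\theta_\infty)^{-1}$ almost surely along each path. A tail-weighted average of a convergent sequence converges to the same limit: for any $\varepsilon>0$, all indices $i>N$ eventually have $\mathcal{I}(\theta_{i-1})^{-1}$ within $\varepsilon$ of $\mathcal{I}(\theta_\infty)^{-1}$, so the convex combination with weights $i^{-2}/\sum_{j>N} j^{-2}$ lies within $\varepsilon$ too, giving $V_N^2 / \sum_{i=N+1}^\infty i^{-2} \to \mathcal{I}(\theta_\infty)^{-1}$ almost surely. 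Likewise $\widehat{V}_N^2/\sum_{i=N}^\infty i^{-2} = \mathcal{I}(\theta_N)^{-1} \to \mathcal{I}(\theta_\infty)^{-1}$ almost surely, and the shift in the lower summation index is harmless because $\sum_{i=N}^\infty i^{-2} / \sum_{i=N+1}^\infty i^{-2} = 1 + N^{-2}/\sum_{i=N+1}^\infty i^{-2} \to 1$, the tail sum being of order $N^{-1}$. Dividing the two equivalences yields $\widehat{V}_N^2/V_N^2 \to 1$ almost surely, and Slutsky then closes the argument.

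\textbf{Main obstacle.} The only genuinely delicate point will be the tail-weighted-average step, namely confirming that replacing the path-dependent weights $\mathcal{I}(\theta_{i-1})^{-1}$ by the single value $\mathcal{I}(\theta_N)^{-1}$ (equivalently, by the limit $\mathcal{I}(\theta_\infty)^{-1}$) does not alter the limiting ratio. Here I must use that the $i^{-2}$ weights concentrate all their mass on arbitrarily large indices as $N \to \infty$, and that $\mathcal{I}(\theta_\infty)^{-1}$ is almost surely finite and strictly positive (from $\theta_\infty \in \R$ and $\mathcal{I}(\theta)>0$ in Assumption \ref{as:existence}) so that the ratios are well defined. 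Everything else is routine; note that Assumption \ref{as:UI}, although required for Theorem \ref{th:pred_CLT} and Lemma \ref{lem:L2}, enters this corollary only through the almost-sure convergence $\theta_N \to \theta_\infty$ and the predictive central limit theorem it underwrites.
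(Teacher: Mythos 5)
Your proposal is correct and follows essentially the same route as the paper: the authors also write $\widehat{V}_N^{-1}(\theta_\infty-\theta_N)$ as the product of $V_N/\widehat{V}_N$ with the Theorem \ref{th:pred_CLT} quantity, deduce $V_N/\widehat{V}_N\to 1$ almost surely from continuity of $\mathcal{I}(\cdot)^{-1}$ and the tail-weighted-average convergence $r_N^{-2}V_N^2\to\mathcal{I}(\theta_\infty)^{-1}$ (their Lemma \ref{app:lem:var}), and conclude by Slutsky. The only difference is that you spell out the weighted-average step and the index-shift detail explicitly, which the paper delegates to the appendix.
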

\begin{proof}
    As $\mathcal{I}(\theta)^{-1}$ is continuous, we have $\mathcal{I}(\theta_N)^{-1} \to \mathcal{I}(\theta_\infty)^{-1}$ almost surely, so
   ${V_N}/{\widehat{V}_N} \to 1$
    almost surely. Applying Slutsky's theorem to  Theorem \ref{th:pred_CLT} then gives us the result.
\end{proof}

 \cite{Fortini2020,Fortini2024,Fong2024} suggest to directly use the initial estimate with $\theta_n + \widehat{V}_n\varepsilon$ as the approximation to the martingale posterior, where $n$ is the number of real observations. However, we highlight that this may inaccurate if $n$ is too small. Intuitively, for small $n$, the martingale posterior may quite far from a Gaussian distribution, and we cannot generate more real observations. The key idea behind our method is that we are free to impute as many simulated data points as desired given a computational budget, so we can just impute $Y_{n+1:N}$ until we reach a point where the Gaussian approximation is indeed accurate. We thus recommend a hybrid scheme, where we carry out regular predictive resampling for a moderate value of $N$ within the computation budget, then finish-off predictive resampling by adding $\widehat{V}_N \varepsilon$. This is illustrated in Algorithm \ref{alg:hybrid} below.

 \begin{figure}[!h]
\center
\begin{minipage}{.5\linewidth}
\begin{algorithm}[H]
\caption{Hybrid predictive resampling for a single posterior sample.}\label{alg:hybrid}
 Estimate $\theta_n$ from $Y_{1:n}$\\
 \For{$i \gets n+1$ \textnormal{\textbf{to}} $N$}{
  Draw $Y_{i} \sim p_{\theta_{i-1}} $ \\
   $\theta_i \leftarrow \theta_{i-1} +i^{-1} Z_i$ \\}
  Draw $\varepsilon \sim \mathcal{N}(0,1)$\\
 $\widehat{\theta}_\infty \leftarrow \theta_N + \widehat{V}_N \varepsilon$  \\
Output $\widehat{\theta}_\infty$
\end{algorithm}
\end{minipage}
\end{figure}

The key is that $N$ can be chosen to be much smaller than usual, as the remaining unsampled variation can be approximated by a Gaussian variable with random variance.
To choose a suitable $N$, one potential guideline is to choose $N$ such that $N+n = K p$ where $p$ is the dimensionality of $\theta$, and $K$ is a chosen constant, for example $K = 100$. We can choose $K$ sufficiently large so the asymptotic approximation holds well.
We now show in a simulation example that the hybrid scheme will be much better than truncation or applying the Gaussian approximation directly.\vspace{5mm}

\begin{example}\label{ex:exp}
Consider the exponential distribution as the predictive $p_\theta(y) = \text{Exp}(\theta)$, where $\theta$ is the scale parameter. The estimate of $\theta_N$ is simply the mean, which has the recursive update
\begin{align*}
 \theta_N   = \theta_{N-1} + N^{-1}(Y_N - \theta_{N-1}).
\end{align*}
We can easily apply Proposition \ref{prop:Z4} and compute (\ref{eq:pred_as_var}); see Section \ref{sec:expon} of the Appendix. To demonstrate the benefits of the hybrid scheme, we simulate $n = 10$ data points from $\text{Exp}(\theta = 1)$, and compare the parametric martingale posterior obtained using the hybrid scheme with truncation at $N = 30$ to the ground truth which is estimated by truncating at a large value of $N= 20000$ (which is essentially exact numerically). As baselines, we compare to the direct Gaussian approximation of \cite{Fortini2020,Fortini2024} and the truncated parametric martingale posterior at $N = 30$ without the Gaussian correction. Figure \ref{fig:hybrid} illustrates the kernel density estimates of $B = 50000$ parametric martingale posterior samples for the various sampling schemes. We see that the hybrid and exact scheme are very close, while the truncated scheme is noticeably too narrow and the Gaussian is inaccurate due to the skewness of the  martingale posterior. In terms of computation time on an Apple M2 Pro chip, the exact sampler required 6.4s, whereas the truncated/hybrid  sampler required $6.3 \times 10^{-4}$s, indicating a massive speed-up roughly equal to the ratio of the truncation values. This example illustrates that the first few imputed samples account for most of the non-Gaussianity of the martingale posterior, resulting in an accurate approximation via the hybrid scheme.

\begin{figure}
\begin{center}
\includegraphics[width=0.65\textwidth]{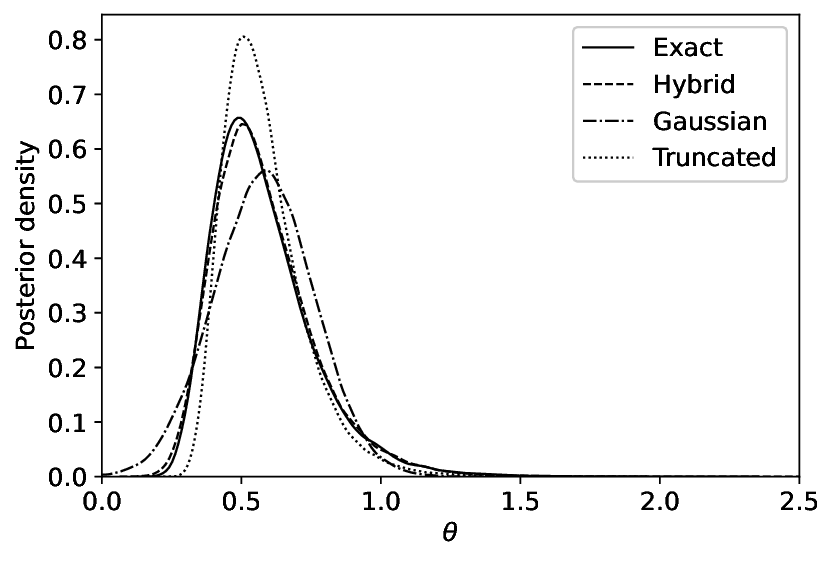}
\end{center}
\vspace{-7mm}
\caption{Kernel density estimate plots of $B = 50000$ parametric martingale posterior samples for the exact (solid), hybrid (dashes), truncated (dot) and Gaussian (dot-dash) sampling schemes.}
\label{fig:hybrid}
\end{figure}
 \end{example}

\section{Bernstein-von Mises theorem}\label{sec:bvm}
\subsection{Frequentist asymptotics}
In this section, we turn to a frequentist study of the parametric martingale posterior. We now have $Y_{1:n}$  independently and identically distributed according to $P^*$, where we will assume that the model is well-specified, i.e. there exists $\theta^*$ such that $p^* = p_{\theta^*}$. We consider model misspecification in Section \ref{sec:misspec}.
We would then like to study the martingale posterior  as $n \to \infty$.  We begin by first illustrating posterior consistency, before showing a Bernstein-von Mises theorem.

Again, we consider the update (\ref{eq:sgd}) for predictive resampling. The notation is slightly more involved due to the two data regimes for $Y_{1:n}$ and $\widetilde{Y}_{n+1:\infty}$, where we now use $\widetilde{Y}$ and double indexing to distinguish between simulated and observed data. The index $n$ corresponds to the observed samples, whilst $i$ indexes the imputed data. 
   Consider the parametric martingale posterior starting at $\theta_n$ with sample size $n$, that is for $i \geq 1$:
\begin{align*}
    \widetilde{Y}_{ni}  \sim p_{\theta_{n,i-1}}, \quad 
    \theta_{ni} = \theta_{n,i-1} + (n+i)^{-1} \mathcal{I}\left( \theta_{n,i-1}\right)^{-1} s\left(\theta_{n,i-1}, \widetilde{Y}_{ni}\right)
\end{align*}
 where $\theta_{n0} = \theta_n$ and $(\widetilde{Y}_{ni})_{i \geq 1}$ indicates the simulated population after observing $n$ observations from $P^*$. 
  As before, we will write the update function or natural gradient as $  Z_{ni} = \mathcal{I}\left( \theta_{n,i-1}\right)^{-1} s\left(\theta_{n,i-1}, \widetilde{Y}_{ni}\right)$ for shorthand. 
  
  As we are interested in the frequentist properties of the martingale posterior, we can first consider taking $i \to \infty$, where $\theta_{n\infty}$ corresponds to the almost sure limit of the martingale for each $n$, assuming we have the appropriate conditions for martingale convergence. We can then study the distribution of $\theta_{n\infty}$ as $n \to \infty$. To that end, we have the following assumption.
  
  \begin{assumption}\label{as:bvm}
  Suppose  $\mathcal{I}(\theta)^{-1}$ is continuously differentiable in a neighbourhood of $\theta^*$. 
  Furthermore, assume the conditions of Proposition \ref{prop:Z4} hold. 
  \end{assumption}
  
Intuitively, the first part of Assumption \ref{as:bvm} ensures that the variance of the update $Z_{ni}$ does not change too drastically with the initial $\theta_n$. As before, the second part is to prevent the variance of the martingale posterior from growing too quickly when predictive resampling. As we showed in Lemma \ref{lem:L2}, $\theta_{n\infty}$ exists almost surely as $\left(\theta_{ni}\right)_{i \geq 1}$ is a martingale in $L_2$ for each $n\geq 1$.
  
  In the proofs of the results to come, it is actually sufficient for the array $\left(Z_{ni}^2\right)_{i \geq 1, n \geq 1}$ to be uniformly integrable, but this is more arduous than Assumption \ref{as:UI} as it will also depend on the sequence $\theta_n$.
 We thus opt to work with the conditions of Proposition \ref{prop:Z4},  which is simpler to verify and implies the uniform integrability condition if $(\theta_n)_{n \geq 1}$ is convergent; see Proposition \ref{app:prop:Z4_bvm} in the Appendix for further details. In practice, checking Assumption \ref{as:bvm} is the same exercise as for the predictive asymptotic result, which we have demonstrated earlier.

\subsection{Posterior consistency}
In order to obtain posterior consistency, we merely require that the sequence $(\theta_n)_{n\geq 1}$ is a consistent estimator. For example, consistency of the stochastic approximation estimator $\theta_n$ via (\ref{eq:sgd}) is usually guaranteed by the below.
\begin{assumption}\label{as:consistency}
 The expected score function, $M(\theta) = \int  s(\theta,y) \, dP^*(y)$, exists and has a unique zero at $\theta^*$. Furthermore, $M(\theta) >0$ if $\theta  < \theta^*$ and $M(\theta) <0$ if $\theta  >\theta^*$. 
\end{assumption}
Under Assumption \ref{as:consistency} and additional regularity conditions, we can show that $\theta_n \to \theta^*$  $P^*$-almost surely using the almost supermartingale theorem of  \cite{Robbins1971}. Alternatively, we can let $\theta_n$ be a well-understood point estimator with good properties, such as the maximum likelihood estimator; see Section \ref{sec:init_estimate} in the Appendix for more details.

We follow a similar approach to \cite{Fong2024}  to study the consistency of the parametric martingale posterior based on Markov's inequality. 
\begin{proposition}
Suppose $\left(\theta_n\right)_{n \geq 1}$ is estimated from $Y_{1:n}$ drawn from $P^*$ and is strongly consistent at $\theta^*$.
Under Assumptions \ref{as:existence} and \ref{as:bvm}, the martingale posterior is strongly consistent at $\theta^*$, that is $\Pi(\theta_{n\infty} \in U^c \mid Y_{1:n}) \to 0$ as $n \to \infty$ $P^*$-almost surely for every neighborhood $U$ of $\theta^*$, where $\theta_{n\infty}$ is distributed according to the martingale posterior centred at $\theta_n$. 
\end{proposition}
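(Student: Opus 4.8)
The plan is to bound the posterior mass outside a neighbourhood via a conditional Chebyshev inequality and then control the two resulting terms separately. Fix $\epsilon > 0$ and take $U = (\theta^* - \epsilon, \theta^* + \epsilon)$. Since $(\theta_{ni})_{i \geq 1}$ is an $L_2$-bounded martingale for each $n$ by Lemma \ref{lem:L2}, it converges in $L_2$ to $\theta_{n\infty}$, so $E(\theta_{n\infty} \mid Y_{1:n}) = \theta_n$. Writing the mean-squared deviation from $\theta^*$, using orthogonality of the martingale increments together with $E(Z_{ni}^2 \mid \mathcal{F}_{n,i-1}) = \mathcal{I}(\theta_{n,i-1})^{-1}$, gives the decomposition
\begin{align*}
E\left\{(\theta_{n\infty} - \theta^*)^2 \mid Y_{1:n}\right\} = (\theta_n - \theta^*)^2 + V_n, \quad V_n := \sum_{i=1}^\infty (n+i)^{-2}\, E\left\{\mathcal{I}(\theta_{n,i-1})^{-1} \mid Y_{1:n}\right\},
\end{align*}
where the interchange of sum and conditional expectation is justified by Tonelli. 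Conditional Chebyshev then yields $\Pi(\theta_{n\infty} \in U^c \mid Y_{1:n}) \leq \epsilon^{-2}\{(\theta_n - \theta^*)^2 + V_n\}$, so it suffices to show both terms vanish $P^*$-almost surely.

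The squared-bias term $(\theta_n - \theta^*)^2 \to 0$ $P^*$-almost surely is immediate from the assumed strong consistency of $\theta_n$. The work is in the variance term $V_n$. First I would convert the fourth-moment bound of Proposition \ref{prop:Z4} into a linear bound on the inverse Fisher information: by Cauchy--Schwarz, $\mathcal{I}(\theta)^{-1} = E_\theta\{Z(\theta,Y)^2\} \leq [E_\theta\{Z(\theta,Y)^4\}]^{1/2} \leq B^{1/2} + C^{1/2}\theta^2$. Applied at $\theta = \theta_{n,i-1}$ and combined with the second-moment recursion $E(\theta_{ni}^2 \mid Y_{1:n}) = E(\theta_{n,i-1}^2 \mid Y_{1:n}) + (n+i)^{-2} E\{\mathcal{I}(\theta_{n,i-1})^{-1} \mid Y_{1:n}\}$, this produces the affine recursion $b_{ni} \leq b_{n,i-1}\{1 + C^{1/2}(n+i)^{-2}\} + B^{1/2}(n+i)^{-2}$ for $b_{ni} := E(\theta_{ni}^2 \mid Y_{1:n})$, with $b_{n0} = \theta_n^2$.

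Solving this recursion is the key step. Since $\sum_{i\geq 1}(n+i)^{-2} \leq n^{-1}$, the multiplicative factors telescope into $\prod_{j\geq 1}\{1 + C^{1/2}(n+j)^{-2}\} \leq \exp(C^{1/2}/n)$, which is bounded uniformly in $n$, giving the pathwise bound $\sup_i b_{ni} \leq \exp(C^{1/2}/n)\{\theta_n^2 + B^{1/2}/n\}$. Feeding this back through the Cauchy--Schwarz bound gives $E\{\mathcal{I}(\theta_{n,i-1})^{-1}\mid Y_{1:n}\} \leq B^{1/2} + C^{1/2}\sup_i b_{ni}$ uniformly in $i$, so $V_n \leq n^{-1}\{B^{1/2} + C^{1/2}\sup_i b_{ni}\}$. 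As $\theta_n \to \theta^*$ almost surely, the bracketed factor converges to the finite constant $B^{1/2} + C^{1/2}(\theta^*)^2$, whence $V_n = O(n^{-1}) \to 0$ $P^*$-almost surely, completing the argument.

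The main obstacle is exactly this variance control: because $\theta_{ni}$ is not confined to a compact set, without a quantitative handle the imputed variance could in principle accumulate without bound over the infinite predictive-resampling horizon. The fourth-moment condition of Proposition \ref{prop:Z4} is what tames this, since coupled with the summability $\sum (n+i)^{-2} < \infty$ it converts the second-moment recursion into a convergent product and yields the $O(n^{-1})$ pathwise bound on $V_n$. I note that only the moment condition and the continuity of $\mathcal{I}(\cdot)^{-1}$ (from Assumptions \ref{as:existence} and \ref{as:bvm}) are needed here; the continuous differentiability of $\mathcal{I}(\cdot)^{-1}$ is reserved for the sharper Bernstein--von Mises result to follow.
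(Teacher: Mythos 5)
Your proposal is correct and follows essentially the same route as the paper: a conditional Markov/Chebyshev bound, the bias--variance decomposition $E\{(\theta_{n\infty}-\theta^*)^2\mid Y_{1:n}\}=(\theta_n-\theta^*)^2+V_n$ via the martingale property, strong consistency for the bias, and the fourth-moment condition of Proposition \ref{prop:Z4} to force $V_n\to 0$. The only (cosmetic) difference is that you control $V_n$ through a self-contained second-moment recursion obtained by Cauchy--Schwarz, whereas the paper routes through uniform integrability of $(Z_{ni}^2)$ established by the analogous fourth-moment recursion in its appendix; both yield the same $O(n^{-1})$ bound on the posterior variance.
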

\begin{proof}
  Let $\epsilon > 0$ be the size of the neighborhood $U$. We apply Markov's inequality:
    \begin{align*}
        \Pi\left(|\theta_{n\infty} - \theta^*|> \epsilon \mid Y_{1:n}\right) \leq \frac{1}{\epsilon^2}E\left\{(\theta_{n\infty} - \theta^*)^2 \mid Y_{1:n}\right\}.
    \end{align*}
    The expectation can be written as
    \begin{align*}
       E\left\{(\theta_{n\infty} - \theta^*)^2 \mid Y_{1:n}\right\}= E\left\{(\theta_{n\infty} - \theta_n)^2 \mid Y_{1:n}\right\}+  \left(\theta_n - \theta^*\right)^2,
    \end{align*}
    where the cross-term is zero from the martingale, and the first term is simply the posterior variance. We show in Lemma \ref{app:lem:sn_UI} of the Appendix that the posterior variance goes to 0 as $n \to \infty$. Finally, $\theta_n \to \theta^*$ $P^*$-almost surely, giving us the result.
\end{proof}

We see that posterior consistency follows from the posterior variance approaching 0 and consistency of the posterior mean. The above can be weakened to convergence in $P^*$-probability if $\theta_n$ is only weakly consistent. Finally, \cite{Fong2024} extends the above to derive posterior contraction rates in the nonparametric case, but we omit this in favour of brevity, and focus directly on the Bernstein-von Mises result.

\subsection{Martingale central limit theorem}

 We now consider the full Bernstein-von Mises result.
 To begin, we will consider a {deterministic} sequence $\left(\theta_n\right)_{n \geq 1}$ which converges to $\theta^*$, before considering the case where $\theta_n$ is estimated from $Y_{1:n} \iid P^*$.  We then have the following main result.
\begin{theorem}\label{th:bvm}
    Let $(\theta_n)_{n \geq 1}$ be a  deterministic sequence which converges to $\theta^*$ as $n \to \infty$. Under Assumptions \ref{as:existence} and \ref{as:bvm}, we have that
    \begin{align*}
   r_n^{-1}\left(\theta_{n\infty} - \theta_n\right) \to \mathcal{N}\left\{0,\mathcal{I}(\theta^*)^{-1}\right\}
    \end{align*}
    in distribution as $n \to \infty$, where $r_n^2 = {\sum_{i = n+1}^\infty i^{-2}}$.
    \end{theorem}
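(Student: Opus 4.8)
The plan is to recognise $\theta_{n\infty}-\theta_n$ as a tail sum of martingale differences and to apply a martingale central limit theorem for arrays, now with the row index $n$ and an infinite column index $i$, in the same spirit as Theorem \ref{th:pred_CLT} but sending $n\to\infty$ rather than the truncation point. Writing $X_{ni}=\theta_{ni}-\theta_{n,i-1}=(n+i)^{-1}Z_{ni}$, each row $(X_{ni})_{i\geq 1}$ is a martingale difference sequence with respect to $\mathcal{F}_{n,i-1}=\sigma(\widetilde Y_{n1},\ldots,\widetilde Y_{n,i-1})$, and $\theta_{n\infty}-\theta_n=\sum_{i=1}^\infty X_{ni}$ converges in $L_2$ by Lemma \ref{lem:L2}. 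Using the model identity $E_\theta\{Z(\theta,Y)^2\}=\mathcal{I}(\theta)^{-2}E_\theta\{s(\theta,Y)^2\}=\mathcal{I}(\theta)^{-1}$ from Assumption \ref{as:existence}, the conditional variance of an increment is $E(X_{ni}^2\mid\mathcal{F}_{n,i-1})=(n+i)^{-2}\mathcal{I}(\theta_{n,i-1})^{-1}$, and since $r_n^2=\sum_{i=1}^\infty(n+i)^{-2}$ the normalisation by $r_n$ is exactly the deterministic parametric rate. The conceptual contrast with Theorem \ref{th:pred_CLT} is that the limiting conditional variance will now be the \emph{deterministic} constant $\mathcal{I}(\theta^*)^{-1}$ rather than the random $\mathcal{I}(\theta_\infty)^{-1}$, because the trajectory concentrates at $\theta^*$ as $n\to\infty$. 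The two conditions to verify for the array CLT (e.g.\ \citet[Corollary 3.1]{Hall2014}) are convergence of the conditional variance to this constant and a conditional Lindeberg condition.

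For the conditional variance I would show
\[
r_n^{-2}\sum_{i=1}^\infty E(X_{ni}^2\mid\mathcal{F}_{n,i-1})=r_n^{-2}\sum_{i=1}^\infty(n+i)^{-2}\mathcal{I}(\theta_{n,i-1})^{-1}\longrightarrow\mathcal{I}(\theta^*)^{-1}
\]
in probability. Since the weights $(n+i)^{-2}/r_n^2$ sum to one, it suffices to replace each $\mathcal{I}(\theta_{n,i-1})^{-1}$ by $\mathcal{I}(\theta^*)^{-1}$, and by the continuous differentiability of $\mathcal{I}(\cdot)^{-1}$ near $\theta^*$ in Assumption \ref{as:bvm} the error is controlled by $\sup_i|\theta_{ni}-\theta^*|$. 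The crux is therefore to establish the trajectory concentration $\sup_i|\theta_{ni}-\theta^*|\to 0$ in probability. As $\theta_n\to\theta^*$ is given, I reduce this to $\sup_i|\theta_{ni}-\theta_n|\to 0$, for which I use the $L_2$ bound
\[
E\{(\theta_{n\infty}-\theta_n)^2\}=\sum_{i=1}^\infty(n+i)^{-2}E\{\mathcal{I}(\theta_{n,i-1})^{-1}\}=O(r_n^2)=O(n^{-1}),
\]
where $\sup_{n,i}E\{\mathcal{I}(\theta_{n,i-1})^{-1}\}=\sup_{n,i}E(Z_{ni}^2)<\infty$ follows from the uniform integrability of the array $(Z_{ni}^2)$ supplied by Proposition \ref{prop:Z4} together with the convergence of $(\theta_n)$ (Proposition \ref{app:prop:Z4_bvm}). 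Doob's $L_2$ maximal inequality applied to the martingale $(\theta_{ni}-\theta_n)_i$ then upgrades this to $E\{\sup_i(\theta_{ni}-\theta_n)^2\}=O(n^{-1})$, giving the required uniform concentration.

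For the conditional Lindeberg condition I would take the Lyapunov route via fourth moments. On the high-probability event that the trajectory stays within a fixed neighbourhood of $\theta^*$, Proposition \ref{prop:Z4} bounds $E(Z_{ni}^4\mid\mathcal{F}_{n,i-1})\leq B+C\theta_{n,i-1}^4$ uniformly, so, using $\sum_i(n+i)^{-4}=O(n^{-3})$ and $r_n^4=O(n^{-2})$,
\[
\frac{1}{\epsilon^2 r_n^4}\sum_{i=1}^\infty(n+i)^{-4}E(Z_{ni}^4\mid\mathcal{F}_{n,i-1})\lesssim\frac{1}{\epsilon^2}\,\frac{\sum_i(n+i)^{-4}}{r_n^4}=O(n^{-1})\to 0,
\]
which dominates the truncated conditional second moments. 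With both conditions verified, the array CLT yields stable convergence of $r_n^{-1}(\theta_{n\infty}-\theta_n)$; because the limiting conditional variance is the deterministic constant $\mathcal{I}(\theta^*)^{-1}$, the mixing distribution degenerates and the limit is exactly $\mathcal{N}\{0,\mathcal{I}(\theta^*)^{-1}\}$. (If an infinite column index is inconvenient for the cited array result, one truncates at a growing cutoff and bounds the remainder directly from the $O(r_n^2)$ displacement estimate.)

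The main obstacle I anticipate is the apparent circularity in the conditional-variance step: controlling $\mathcal{I}(\theta_{n,i-1})^{-1}$ along the path requires the path to stay near $\theta^*$, yet bounding the path displacement requires control of $\mathcal{I}(\theta_{n,i-1})^{-1}$. This is broken by the array uniform integrability of $(Z_{ni}^2)$, which furnishes the path-independent bound $\sup_{n,i}E\{\mathcal{I}(\theta_{n,i-1})^{-1}\}<\infty$, and hence the $O(r_n^2)$ displacement bound that feeds Doob's inequality. A secondary technical point is that the $\theta^4$ term in Proposition \ref{prop:Z4} and the ``high-probability event'' arguments must be handled conditionally along a random path; the cleanest device is a stopping time at the first exit from a neighbourhood of $\theta^*$, whose occurrence is negligible by the concentration established above.
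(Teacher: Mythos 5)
Your proposal is correct and follows the same overall architecture as the paper's proof: cast $\theta_{n\infty}-\theta_n=\sum_{i\ge1}X_{ni}$ as a row of an infinite martingale array, verify that the normalised conditional variance converges in probability to the constant $\mathcal{I}(\theta^*)^{-1}$, and verify a Lindeberg condition. Where you diverge is in how the two conditions are checked. For the variance condition, the paper establishes the termwise $L_1$ bound $\sup_{i}E\{|\mathcal{I}(\theta_{ni})^{-1}-\mathcal{I}(\theta^*)^{-1}|\}\to0$ by splitting each expectation over the events $\{\theta_{ni}\in I\}$ and $\{\theta_{ni}\in I^{C}\}$ for a fixed compact $I\subset U$, using local Lipschitzness inside and uniform integrability of $\left(\mathcal{I}(\theta_{ni})^{-1}\right)$ together with Markov's inequality outside; you instead obtain uniform path concentration $\sup_{i}|\theta_{ni}-\theta^*|\to0$ in probability via Doob's $L_2$ maximal inequality and then localise with a stopping time at the first exit from $U$. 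Your route is arguably cleaner and gives a strictly stronger conclusion about the trajectory (the paper only controls $\sup_i P(\theta_{ni}\in I^C)$, not the probability that the whole path exits), at the cost of needing the stopped-martingale device to rule out pathological behaviour of $\mathcal{I}(\cdot)^{-1}$ off $U$ --- which you correctly flag; note also that if you invoke an array CLT normalised by $s_n$ rather than by $V_n$ or $r_n$, you still need the deterministic convergence $r_n^{-2}s_n^2\to\mathcal{I}(\theta^*)^{-1}$, which requires the paper's $L_1$ argument rather than your in-probability one, so it is cleanest to stick with a conditional-variance-normalised statement as you do. For the Lindeberg condition, the paper uses the uniform integrability of $\left(Z_{ni}^2\right)_{i\ge1,n\ge1}$ directly, whereas you pass to a Lyapunov fourth-moment bound; since Assumption \ref{as:bvm} already imposes the fourth-moment condition of Proposition \ref{prop:Z4} and Proposition \ref{app:prop:Z4_bvm} yields $\sup_{n,i}E\left(Z_{ni}^4\right)<\infty$, your Lyapunov route is available, simpler, and does not even require the high-probability-event restriction you mention (the unconditional bound $r_n^{-4}\sum_i(n+i)^{-4}E\left(Z_{ni}^4\right)=O(n^{-1})$ suffices, giving the conditional Lindeberg sum in $L_1$). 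The paper's UI-based argument is the one that would survive a weakening of the moment assumption to square uniform integrability alone.
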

    \begin{proof}
    The key is to construct an infinite martingale array, where each row is the sequence $(\theta_{ni})_{i \geq 1}$ arising from predictive resampling starting at $\theta_{n0} = \theta_n$. In this case, we have one imputed population per $\theta_n$. Assumption \ref{as:bvm} then implies the required Lindeberg condition and variance convergence result for the martingale central limit theorem for infinite arrays (e.g. \citet[Theorem 3.6]{Hall2014}). We detail the full proof in Section \ref{sec:app_bvm} of the Appendix. 
    \end{proof}

We now extend the above to where $\theta_n$ is estimated from data and is no longer deterministic. 
In the below, we require strong consistency of the estimator due to the dependence on the deterministic sequence when applying the martingale central limit theorem; we leave the extension to weakly consistent estimators for future work as we believe it requires a novel proof technique.
\begin{theorem}\label{th:bvm_as}
    Suppose $\left(\theta_n\right)_{n \geq 1}$ is estimated from $Y_{1:n}$ drawn from $P^*$ and is strongly consistent at $\theta^*$. 
    Under Assumptions \ref{as:existence} and \ref{as:bvm}, we have
    \begin{align*}
       r_n^{-1}\left(\theta_{n\infty} - \theta_n\right) \to \mathcal{N}\left\{0,\mathcal{I}(\theta^*)^{-1}\right\} 
    \end{align*}
    in distribution $P^*$-almost surely as $n \to \infty$, where $r_n^2 = {\sum_{i = n+1}^\infty i^{-2}}$.
\end{theorem}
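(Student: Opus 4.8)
The plan is to reduce Theorem \ref{th:bvm_as} to the already-established deterministic version, Theorem \ref{th:bvm}, by conditioning on the observed data $Y_{1:n}$ and exploiting strong consistency to argue pathwise. The crucial structural observation is that, once we condition on $Y_{1:\infty}$, the initial estimate $\theta_n$ is a fixed number and the imputed population $(\widetilde{Y}_{ni})_{i \geq 1}$ is generated by fresh randomness through the recursion, which depends on the observed data \emph{only} through the starting value $\theta_{n0} = \theta_n$. Concretely, one models the imputation noise as a source of randomness independent of $Y_{1:n}$ (e.g. uniform variables pushed through the relevant inverse distribution functions), so that the conditional law of $r_n^{-1}(\theta_{n\infty} - \theta_n)$ given $Y_{1:\infty}$ coincides exactly with the law obtained by running predictive resampling from the deterministic initial value $\theta_n(\omega)$, i.e. the object analysed in Theorem \ref{th:bvm}.

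First I would invoke strong consistency to fix a measurable event $\Omega_0$ with $P^*(\Omega_0) = 1$ on which $\theta_n(\omega) \to \theta^*$. For each $\omega \in \Omega_0$, the realized sequence $c_n := \theta_n(\omega)$ is a genuine deterministic sequence converging to $\theta^*$, and hence satisfies the hypothesis of Theorem \ref{th:bvm}. Applying that theorem to $(c_n)_{n \geq 1}$ yields $r_n^{-1}(\theta_{n\infty} - c_n) \to \mathcal{N}\{0, \mathcal{I}(\theta^*)^{-1}\}$ in distribution, where the randomness is over the imputation alone. Since this is precisely the conditional law of $r_n^{-1}(\theta_{n\infty} - \theta_n)$ given the data, and the conclusion holds for every $\omega \in \Omega_0$, we obtain convergence in distribution $P^*$-almost surely, as claimed.

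The one point requiring care is that the ingredients feeding into Theorem \ref{th:bvm}---namely the Lindeberg and variance-convergence conditions for the infinite martingale array---continue to hold along almost every realized path. Here Assumption \ref{as:bvm} is entirely a condition on the model family $p_\theta$ (continuous differentiability of $\mathcal{I}(\cdot)^{-1}$ near $\theta^*$ together with the fourth-moment bound of Proposition \ref{prop:Z4}), and therefore does not depend on the particular realization. The subtler ingredient is the uniform integrability of the array $(Z_{ni}^2)$, which, as noted before Proposition \ref{app:prop:Z4_bvm}, follows from Proposition \ref{prop:Z4} provided $(\theta_n)$ is convergent; on $\Omega_0$ the realized sequence $c_n$ is convergent and hence bounded, so the bound $E_\theta\{Z(\theta,Y)^4\} \leq B + C\theta^4$ supplies the required uniform fourth-moment control across the array.

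I expect the main obstacle to be the measure-theoretic justification of the conditioning step: one must verify that the imputation mechanism can genuinely be decoupled from $Y_{1:n}$ given $\theta_n$, so that the conditional distribution of $\theta_{n\infty}$ given the data equals the deterministic-start law to which Theorem \ref{th:bvm} applies. This is exactly where strong, rather than merely weak, consistency is essential: almost-sure convergence of $\theta_n$ lets us fix a single null-exceptional set $\Omega_0$ and treat $(c_n)$ as a fixed convergent sequence for the array central limit theorem, whereas convergence only in $P^*$-probability would not permit this pathwise reduction---matching the authors' stated reason for deferring the weakly consistent case.
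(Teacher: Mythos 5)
Your proposal is correct and follows essentially the same route as the paper: invoke strong consistency to fix a full-measure event on which $\theta_n(\omega)\to\theta^*$ is a deterministic convergent sequence, then apply Theorem \ref{th:bvm} pathwise, noting that the conditional law of the imputation given the data is exactly the deterministic-start law. Your additional remarks on decoupling the imputation randomness and on Assumption \ref{as:bvm} being a model-level condition are elaborations of the same argument rather than a different approach.
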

\begin{proof}
By assumption, we have that $\theta_n \to \theta^*$ $P^*$-almost surely. Consider a fixed $\omega \in A$ where $P^*(A) = 1$, then $\left(\theta_n(\omega):n \geq 1\right)$ is a deterministic sequence which converges to $\theta^*$, so we can apply Theorem \ref{th:bvm}. Since Theorem \ref{th:bvm} holds for any convergent sequence $\theta_n \to \theta^*$, the martingale central limit theorem holds with $P^*$-probability 1.
\end{proof}

\subsection{Centring and asymptotic variance}
For traditional Bayesian models, Bernstein-von Mises results are centred around an efficient sequence of estimators; that is, any sequence $\hat{\theta}_{n}$ satisfying
\begin{equation*}
    {n}^{1/2}(\hat{\theta}_{n} - \theta^{*}) = {{n}^{-1/2}}\sum_{i=1}^{n}\mathcal{I}(\theta^*)^{-1}s(\theta^{*}, Y_{i}) + o_{P^{*}}(1).
\end{equation*}
For such a sequence, we have ${n}^{1/2}(\hat{\theta}_{n} - \theta^{*}) \rightarrow \mathcal{N}\left\{0,\mathcal{I}(\theta^*)^{-1}\right\}$ in distribution, and since the asymptotic variance $\mathcal{I}(\theta^*)^{-1}$ matches that of the posterior, we deduce that central credible sets are also asymptotic confidence sets \citep[e.g.][]{van2000}. The efficient centring sequence arises automatically from the shape of the likelihood rather than the choice of prior. In contrast, our results above are always centred around the chosen initial estimate $\theta_{n}$ computed from $Y_{1:n}$. As a result, any guarantees with respect to $\theta^*$ instead depend solely on properties of the estimate $\theta_n$. In this section, we address the implications of this different form of centring and  assess the asymptotic variance for coverage.

  We continue to  assume the model is well-specified. The normalizing rate $r_n$ can  be replaced with ${n}^{-1/2}$, as they are asymptotically equivalent. Theorem \ref{th:bvm_as} then states that  $\pi(\theta_{n\infty}\mid Y_{1:n}) \approx \mathcal{N}\{\theta_n, n^{-1}\mathcal{I}(\theta^*)^{-1}\}$. We thus only have correct coverage of posterior credible intervals, as with the usual Bernstein-von Mises result, if $r_n^{-1} (\theta_n - \theta^*) \to \mathcal{N}\left\{0,\mathcal{I}(\theta^*)^{-1}\right\}$ in distribution as $n \to \infty$. Fortunately, asymptotic normality of maximum likelihood estimates \citep{van2000} or estimates obtained from stochastic approximation is well-studied \citep{Lai2003}, so we can show that $\theta_n$ obtained from (\ref{eq:sgd}) will satisfy the required  central limit theorem above if the model is well-specified and under additional regularity conditions.  As these are standard results, we leave this discussion for Section \ref{sec:init_estimate} in the Appendix.

 Additionally, our result highlights that preconditioning by $\mathcal{I}(\theta)^{-1}$ in (\ref{eq:sgd}) is responsible for 
  the magnitude of the asymptotic posterior variance, which is crucial for good frequentist properties of the induced parametric martingale posterior if $\theta_n$ is an efficient estimator.
  Meanwhile, the learning rate $N^{-1}$ in (\ref{eq:sgd}) dictates that the posterior variance shrinks at rate $n^{-1}$, so our recommended choice is again appropriate in this setting.

In summary, for parametric martingale posteriors, there is a noticeable divide between posterior centring and posterior uncertainty, as they are studied separately and can even be specified from distinct update rules/estimators.
This is noticeably different to traditional Bayesian models, where the point estimate is implied by the full posterior distribution. 
Reassuringly, under correct model specification and  regular settings, the frequentist properties of traditional Bayesian posteriors and parametric martingale posteriors indeed do match.

\section{Extensions}
\subsection{Multivariate parameters}\label{sec:mv}
Although the results so far are only for the univariate case, direct extensions to the multivariate case are just applications of the Cram\'{e}r-Wold device, although stable convergence will now play a greater role. The key difference is that the update term $Z_N$ as in (\ref{eq:Z}) will now be a vector which depends on the score vector and the Fisher information matrix, and we will work with the norms $\|Z(\theta,Y)\|^4$ and  $\|\theta\|^4$  for the analogous version of Proposition \ref{prop:Z4}. 
We provide the main results in Section \ref{sec:app_mv} of the Appendix, with examples in Section \ref{sec:emp}. 

\subsection{Beyond independent and identically distributed}
It is possible to extend the parametric martingale posterior to certain dependent settings, such as regression or time series models \citep{Walker2022,Holmes2023, Moya2024}. In this section, we outline extensions of the theory and methods to the regression setting. Let $X \in \mathcal{X} \subseteq \R^p$, and $Y \in \mathcal{Y}$ where $\mathcal{Y}\subseteq \R$ or $\mathcal{Y}= \{0,1\}$ in the continuous and binary case respectively, and let $p_{\theta}(y \mid x)$ denote the conditional model. We assume that $Y_i \mid X_i \sim p_{\theta^*}(y \mid x)$, and $(X_i)_{i \geq 1}$ is a deterministic covariate sequence (i.e. the fixed design setting).

As discussed in \cite{Holmes2023,Fong2023,Fortini2024}, it is most natural to devote modelling efforts to the conditional distribution of $Y \mid X$. For predictive resampling, we then draw $X_{n+1:\infty}$ from the deterministic covariate sequence, or independently and identically distributed from the empirical distribution of the observed covariates $X_{1:n}$ if the full sequence of covariates is not known.
In the random design case, we can use the Bayesian bootstrap to draw $X_{n+1:\infty}$, but we do not consider that here.

 Predictive resampling then involves first drawing $X_{N}$ from the chosen scheme,  then subsequently drawing $Y_{N}  \sim p_{\theta_{N-1}}(\cdot \mid X_{N})$, and finally computing 
\begin{align*}
    \theta_{N} = \theta_{N-1} + N^{-1}\mathcal{I}(\theta_{N-1})^{-1} s(\theta_{N-1}, Y_{N}; X_{N})
\end{align*}
where $s(\theta, Y; X) = \nabla_{\theta} \log p_{\theta}(Y \mid X)$ and 
\begin{align*}
    \mathcal{I}(\theta; X) = E_\theta\left\{s(\theta, Y; X)\, s(\theta, Y; X)^T \mid X\right\}, \quad  \mathcal{I}(\theta) = \lim_{n \to \infty}\frac{1}{n}\sum_{i = 1}^n\mathcal{I}(\theta; X_i).
    \end{align*}
One can verify quite easily that the choice of the method to impute $X_{n+1:\infty}$ does not impede the martingale condition.  In practice, if $\mathcal{I}(\theta)$ is not known, we can approximate $\mathcal{I}(\theta)$ with
    \begin{align*}
       \widehat{\mathcal{I}}_n(\theta) = \frac{1}{n}\sum_{i = 1}^n\mathcal{I}(\theta; X_i).
    \end{align*}
    In this case, $\widehat{\mathcal{I}}_n(\theta)$ is estimated once from the observed $X_{1:n}$ and held fixed during predictive resampling.
To obtain the central limit theorems, we can simply apply the multivariate extension of Theorems \ref{th:pred_CLT} and \ref{th:bvm_as}, where the martingale is with respect to the filtration generated by the pairs $(Y_N,X_N)_{N > n}$. We provide detailed results in Section \ref{sec:app_reg} of the Appendix, with an additional logistic regression example in Section \ref{sec:app_logreg} of the Appendix.

  \subsection{Model misspecification}\label{sec:misspec}
Unlike in traditional Bayes, Theorem \ref{th:bvm_as} holds true irrespective of whether the model is well-specified or not, as long as $\theta_n$ converges $P^*$-almost surely. This is another concrete demonstration of the separation between the centring and asymptotic variance of the parametric martingale posterior. We also highlight that the asymptotic variance of the parametric martingale posterior does not depend on $P^*$, so the uncertainty is more parametric in a sense.  In contrast, for traditional Bayes, under regularity conditions, the asymptotic variance of the regular Bayesian posterior is $J(\theta^*) = -E_{P^*}\{\partial s(\theta^*,Y)/ \partial \theta\}$ instead of $\mathcal{I}(\theta^*)$ \citep{Kleijn2012}. 

Under model misspecification, the asymptotic variance of the initial estimate $\theta_n$ will no longer match that of the parametric martingale posterior, so we will not attain correct coverage, which also occurs in the Bayesian case. However, the additional flexibility of the learning rate in (\ref{eq:sgd}) offers a potential solution, which is akin to a tempered likelihood in the regular Bayesian case \citep{Holmes2017}. Specifically, if we replace $N^{-1}$ in (\ref{eq:sgd}) with $a N^{-1}$ for $a \in \R^+$, we can appropriately inflate or shrink $\mathcal{I}(\theta^*)$ to match the asymptotic variance of the initial estimate $\theta_n$.  As the condition (\ref{eq:alpha}) will still be satisfied, all of our results will still hold.
 This can also be extended to the multivariate case, where $a$ is a chosen positive definite matrix instead of a scalar. Unlike tempering in regular Bayes, this gives us the freedom to adjust the entire covariance structure of the posterior distribution, instead of just a uniform re-scaling (e.g. as in \cite{Lyddon2019}).

\section{Empirical studies}\label{sec:emp}
\subsection{Setup}
We outline the implementation details for the empirical studies. We implement the parametric martingale posterior in the Julia programming language \citep{Bezanson2017}, which offers high efficiency for iterative updates. For baseline comparators requiring Markov chain Monte Carlo, we rely on the efficient No-U-Turn Sampler \citep{Hoffman2014} implemented in the \texttt{Turing.jl} package \citep{Ge2018}  in Julia. All experiments are run on an Apple M2 Pro CPU.

\subsection{Simulations}\label{sec:sim}

This section presents a simulation study which compares the parametric martingale posterior to traditional Bayesian inference in a bivariate normal model. For $y \in \R^2$, our predictive model is $p_\theta(y) = \mathcal{N}(y \mid \mu,\Sigma)$,
where  $\mu = [\mu_1, \mu_2]^T$ is a bivariate vector and $\Sigma$ is a $2 \times 2$ covariance matrix with entries $\Sigma_{11} = s_1$, $\Sigma_{22} = s_2$ and $\Sigma_{12} = \Sigma_{21} = s_{12}$. We thus consider $\theta = (\mu_1,\mu_2,s_1,s_2,s_{12})$. For the parametric martingale posterior, it is not too challenging to verify that the natural gradient takes the form $Z_{\mu_j}(\theta,Y) = Y_j - \mu_j$ for the means, $Z_{s_j}(\theta,Y) =(Y_j -\mu_j)^2- s_j$ for the variances and $Z_{s_{12}}(\theta,Y) = (Y_1- \mu_1)(Y_2 - \mu_2) - s_{12}$ for the covariance.
For hybrid predictive resampling, $\mathcal{I}(\theta)^{-1}$ can be easily computed, and the conditions required for the multivariate versions of Theorems \ref{th:pred_CLT} and \ref{th:bvm_as} can be verified.
We omit these details for brevity, with full derivations provided in Section \ref{sec:normal_mv} of the Appendix. For the initial estimate $\theta_n$, we use the sample estimates of $\mu$ and $\Sigma$ as they are unbiased and easy to implement, and negligibly different to (\ref{eq:sgd}). 

For the regular Bayesian model, we opt for weakly informative priors for all parameters \citep{Gelman2006,Gelman2008}. In particular, we have
\begin{align*}
    \mu_j \sim \mathcal{N}(0,10^2), \quad {s_j}^{1/2} \sim \text{Half-Cauchy}(0,5^2), \quad \rho \sim \text{Unif}(-1,1)
\end{align*}
where $s_{12} = \rho ({s_1 s_2})^{1/2}$. We highlight that it is not straightforward to elicit prior distributions on covariance matrices which are noninformative or weakly informative \citep{Fuglstad2020,Pinkney2024}. On the other hand, this is not a necessary consideration for the parametric martingale posterior, which is entirely prior-free. Furthermore, due to the martingale, the posterior mean of the martingale posterior will always be equal to the initial estimate $\theta_n$.  

We set the truth to $\theta^* = (-0.5,1.0,1.0,0.5,0.7)$, and evaluate the coverage and average length of 95\% credible intervals for different sample sizes $n = (20,50,500)$. For each  setting of $n$, we consider 5000 sampled datasets for the parametric martingale posterior. Due to the requirement of Markov chain Monte Carlo and its associated cost, we only consider 500 sampled datasets for the Bayesian posterior. We generate $B = 2000$ posterior samples for both methods, and use a truncation of $N = n+50$ for hybrid predictive resampling.

Table \ref{tab2} illustrates the results of the simulations for $(\mu_1,s_1,s_{12})$, where we leave $(\mu_2, s_2)$ for the Appendix as the results are practically the same.
We see that the parametric martingale posterior undercovers slightly for $n = 20$, but attains accurate coverage once $n$ is increased, in agreement with Theorem \ref{th:bvm_as}. 
The Bayesian posterior on the other hand attains coverage for all $n$ at the cost of noticeably wider intervals. This discrepancy between the two posteriors disappears as $n$ is increased, which is unsurprising as they are asymptotically equivalent. This agrees with the observation made by \cite{Moya2024} that the Bayesian and martingale posterior variance differ by order $n^{-2}$, which is likely the effect of the prior distribution.
The above results highlight the value of a well-specified prior when $n$ is small, but it is obvious that a poorly specified prior distribution would greatly hurt coverage. The parametric martingale posterior may be more suitable when $n$ is of reasonable size or there is little prior information, and it is in a sense a default or `frequentist' posterior \citep{Holmes2023}.

\begin{table}
    \caption{Coverage and average length of 95\% credible intervals of traditional Bayesian posterior (Markov Chain Monte Carlo) and the parametric martingale posterior (hybrid predictive resampling) on different sample sizes 
}
\center
\begin{small}
{   \begin{tabular}{ccrrcrrcrr}
        && \multicolumn{2}{c}{$n=20$} & &\multicolumn{2}{c}{$n=100$}&&\multicolumn{2}{c}{$n=500$}\vspace{2mm} \\
        Method& Parameter & Cov & Length&&Cov & Length && Cov &  \\ 
        \multirow{5}{*}{Bayesian posterior}&$\mu_1$ & 93.6& 9.2&& 92.6 & 3.9&& 94.2&1.7\\ 
& $s_1$& 94.8 & 15.3 && 95.8 &5.8&& 93.0 & 2.5\\
& $s_{12}$& 95.2 & 10.8&& 95.4& 4.1 && 93.6 & 1.8\\[5pt]
   &{Max. SE} & 1.1& 0.2 && 1.1 & 0.04 && 1.1 &0.008\\
   & Run-time &   \multicolumn{2}{c}{1.2s}&&\multicolumn{2}{c}{3.7s}&& \multicolumn{2}{c}{14.7s} \\[7pt]
        \multirow{5}{*}{Martingale posterior} 
  & $\mu_1$ & 93.0 & 8.6&& 93.3 & 3.9&& 94.4& 1.7\\ 
    & $s_1$ &  91.3 & 12.0&& 94.4& 5.5&& 94.8& 2.5\\ 
      & $s_{12}$ &90.0 &8.1 && 94.1& 3.8&& 94.6& 1.7 \\[5pt]
      &{Max. SE} & 0.4& 0.06  && 0.4 &0.01 &&   0.3 & 0.002\\[2pt]
         & Run-time &   \multicolumn{2}{c}{0.003s} && \multicolumn{2}{c}{0.003s}&& \multicolumn{2}{c}{0.003s}\vspace{5mm}\\
         \multicolumn{10}{l}{\footnotesize Cov, coverage;  SE, standard error. Coverage is in \%; length has been multiplied by 10. }\\
         \multicolumn{10}{l}{\footnotesize Run-time is per sample; results are over 500 and 5000 repeats for Bayes and the martingale}\\
         \multicolumn{10}{l}{\footnotesize posterior respectively.}
    \end{tabular}}
    \end{small}

    \label{tab2}
\end{table}

A much more stark contrast appears for computation time, where the parametric martingale posterior requires orders of magnitude less time than traditional Bayes. This highlights the massive computational gains made possible by side-stepping Markov chain Monte Carlo. Another benefit of the martingale posterior sampling scheme is that the samples are independent, unlike the output from Markov chain Monte Carlo. Finally, the run-time for the  martingale posterior does not appear to increase much with $n$. In fact, we could technically decrease $N$ as $n$ increases, so the time for hybrid predictive resampling would actually decrease with $n$. The computation time of the martingale posterior is thus bottle-necked by the time for the initial estimation $\theta_n$, which is still negligible at the above sample sizes, but may become noticeable for sufficiently large $n$.

    \subsection{Real data example}\label{sec:rwd}
We demonstrate the parametric martingale posterior on a  robust regression task, using the AIDS Clinical Trials Group Study 175
dataset from the UCI Machine Learning Depository \citep{Asuncion2007,Hammer1996}. The dataset consists of $n = 2139$ patients with HIV who were randomized to 4 treatment arms: (1) zidovudine + didanosine; (2) zidovudine + zalcitabine; (3) didanosine; (4) zidovudine. Following a similar setup to \cite{Hines2022,Li2023},  we consider the continuous outcome as the CD4 count at $20\pm5$ weeks, which is a measure of immune function after some time on treatment. In addition to the treatment group variable, we consider 12 baseline variables given in Section \ref{sec:app_rwd} of the Appendix.
For pre-processing, we normalize all continuous covariates and the outcome to have mean 1 and standard deviation 1, and  introduce 3 dummy variables for the treatment group variable, with zidovudine as the reference. Including the intercept, our design matrix thus consists of $p = 16$ variables. 

A quick check reveals that the excess kurtosis of CD4 count at baseline is equal to 1.8, indicating that the outcome may also be heavy-tailed. We thus opt to use the heavy-tailed linear Student-$t$ model \citep{Lange1989,Geweke1993}. Let $\theta =[\beta, \tau^2]^T$ where $\beta\in \mathbb{R}^{p}$ are the regression coefficients (including an intercept) and $\tau \in \mathbb{R}$ is the scale parameter. Suppose we observe $(Y_{1:n}, X_{1:n})$ where $Y_i \in \mathbb{R}, X_i \in \mathbb{R}^p$. The model is then 
\begin{align*}
   Y_i = \beta^T X_i + \tau \varepsilon_i,  \quad \varepsilon_i \sim \text{Student-}t(\nu),
\end{align*}
where
$\nu > 1$ is the fixed degrees of freedom, which gives $p_{\theta}(y \mid x)$ as the location-scale Student-$t$ distribution. A natural plug-in predictive $p_{\theta_n}(y \mid x)$ where $\theta_n$ is the maximum likelihood estimate can then be used for the parametric martingale posterior.
We highlight that in this case, the estimation of $\theta_n$ requires slightly more work due to the non-concavity of the log-likelihood: we utilize iteratively reweighted least squares \citep{Lange1989} repeated with multiple initializations to estimate the global maximum. 
The asymptotic theory then follows from standard $M$-estimation theory.
Finally, we treat $\nu$ as a hyperparameter and set it to $\nu = 5$. In practice, we could also select it by maximizing the prequential log-likelihood  \citep{Dawid1984}
taking the form $\sum_{i = 1}^n \log p_{\theta_{i-1}}(Y_i \mid X_i)$,
which is closely connected to the marginal likelihood \citep{Gneiting2007,Fong2020}. 

\begin{figure}
\begin{center}
\includegraphics[width=0.55\textwidth]{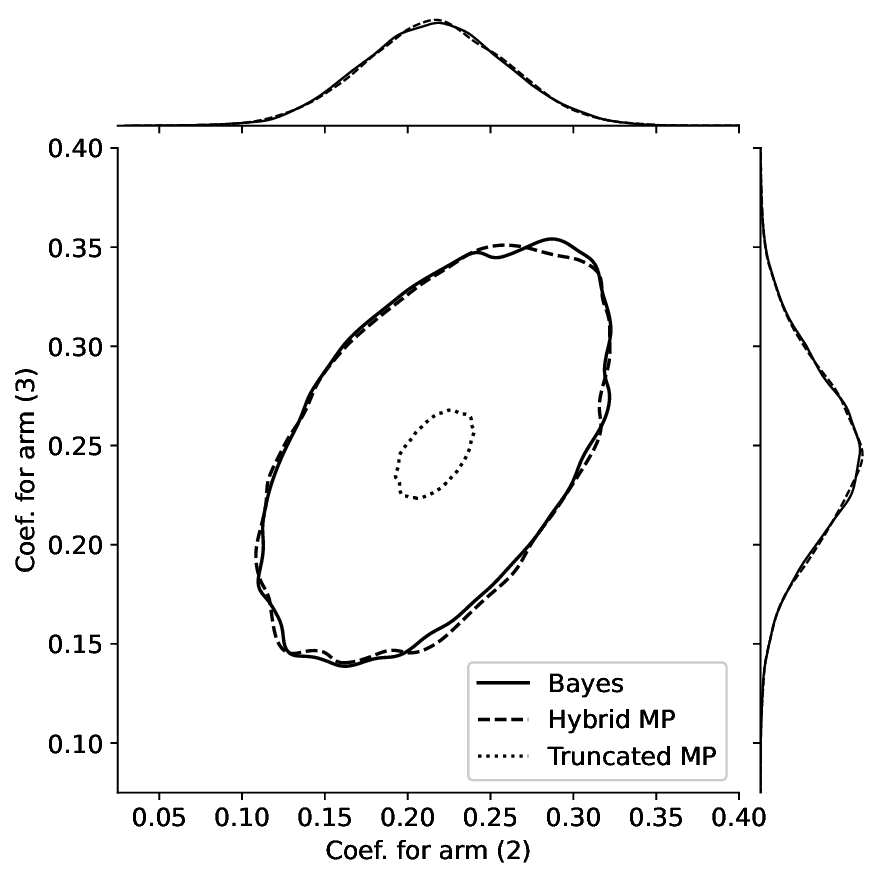}
\end{center}
\vspace{-7mm}
\caption{Posterior 95\% probability contour of the coefficients for treatment arms (2) and (3)
for the Bayes posterior (solid), MP with hybrid sampling (dashes), and MP with truncation $N = n+100$ (dot); 
Marginal kernel density plots are shown above/right, where we omit the truncated MP for clarity;
MP, martingale posterior.}
\label{fig:biv_aids}
\end{figure}

For the parametric martingale posterior, we keep ${\nu}=5$ fixed and set the initial estimate  $\theta_n$  as the maximum likelihood estimate obtained from 10 repeated optimizations. We truncate at $N = n+ 50000$ and $N = n+100$ for exact and hybrid predictive resampling respectively.  One can show that the natural gradient is $Z_n(\theta, Y ;X) = [Z_{\beta}(\theta,Y;X)^T, \,\, Z_{\tau^2}(\theta,Y;X)]^T$ where
\begin{align*}
    Z_{\beta}(\theta,Y;X) &= \left\{ \frac{\tau(\nu + 3)R}{ \nu  +R^2}\right\}\Sigma_{n,x}^{-1}X,
\quad Z_{\tau^2}(\theta,Y;X) =  \frac{\tau^2(\nu + 3)\left(R^2 - 1\right)}{\nu  +R^2}
\end{align*}
for $R = (Y- \beta^T X)/\tau$, and $\Sigma_{n,x} = n^{-1}\sum_{i = 1}^n X_i X_i^T$. We show in Section \ref{sec:app_robreg} of the Appendix that the above update satisfies all conditions for the regression extension of Theorem \ref{th:pred_CLT} as long as $\Sigma_{n,x}$ is positive definite. Under weak assumptions on the sequence of design points $X_{1:n}$ {and additional regularity conditions for consistency of the maximum likelihood estimate}, we also have the regression extension of Theorem \ref{th:bvm_as}. For the Bayesian model, we set the same value of ${\nu}=5$, and again elicit the weakly informative priors 
$$\beta_j \sim \mathcal{N}(0,10^2), \quad \tau \sim \text{Half-Cauchy}(0,5^2).$$ We generate $B = 10000$ posterior samples in both cases.

Figure \ref{fig:biv_aids} illustrates the posterior 95\% contours for $(\beta_j,\beta_k)$ corresponding to  treatment arms (2) and (3) under the Bayes posterior, the martingale posterior with truncation $N =  n+100$ and the hybrid scheme. We exclude the plot of the exact scheme ($N = n+50000$) as this is visually indistinguishable from the hybrid scheme - we provide this comparison in the Appendix.
We see that the martingale posterior truncated at $N = n+100$ drastically underestimates the uncertainty, but this is corrected by the addition of the Gaussian correction in the hybrid scheme. 
The martingale posterior with hybrid predictive resampling is very similar to to the Bayes posterior, which is unsurprising given the weakly informative prior. We see that both the Bayesian and martingale posterior suggests that treatment arms (2) and (3) are more effective than the reference arm (4), which is in agreement with \cite{Hammer1996}.

Table \ref{tab3} illustrates the run-times. Regular Bayes with Markov chain Monte Carlo required 132.4s for posterior sampling. On the other hand, the martingale posterior with $N = n + 50000$ and $N = n + 100$ required 14.4s and 0.03s respectively, with  hybrid predictive resampling requiring a negligible extra amount of time compared to the latter. As before, the run-time for predictive resampling is roughly proportional to $N$, and the hybrid case is the fastest by a very large margin. 
\begin{table}[!ht]
    \caption{Run-times on the ACTG 175 dataset }
    \center
    \small
{   \begin{tabular}{cc}
        Method& \hspace{4mm}Run-time \hspace{4mm} \vspace{1mm}\\ 
        Bayes &132.4s\\
            MP ($N = n+50000$)& 14.4s\\
            MP (Hybrid, $N = n+100$) & 0.03s \vspace{2mm}\\
            \multicolumn{2}{l}{\footnotesize MP, martingale posterior. }
    \end{tabular}}
    \label{tab3}
\end{table}

\section{Discussion}
In this work, we have leveraged martingale central limit theorems to accelerate predictive resampling and provide frequentist guarantees for the parametric martingale posterior. The experiments demonstrate that the parametric martingale posterior can be significantly faster than traditional Bayes, with results that match that of a posterior arising from a noninformative prior. 
We now outline a few interesting future directions of research. Firstly, it would be interesting to consider how one might use the parametric martingale posterior when there is informative prior information, which could open up predictive resampling as a posterior approximation scheme. It is also of much interest to extend the above methodology to models with structure such as hierarchical models, where Bayesian methods arguably have the most application. Finally, it will be interesting to see if the above results and hybrid predictive resampling scheme can be applied to the general nonparametric setting beyond that considered in \cite{Fong2024}.\vspace{5mm}

\section*{Acknowledgement}
A. Yiu receives funding from Novo Nordisk. We thank Chris Holmes and Ziyu Wang for the interesting discussions.

\section*{Code}
Code for reproducing the results can be found at \url{https://github.com/edfong/parametric_MP}.

\bibliographystyle{apalike}
\bibliography{paper-ref}

\newpage

\begin{appendices}

\setcounter{equation}{0}
\setcounter{theorem}{0}
\setcounter{proposition}{0}
\setcounter{lemma}{0}
\setcounter{corollary}{0}
\setcounter{assumption}{0}

\renewcommand{\theequation}{\thesection.\arabic{equation}}
\renewcommand{\thetheorem}{\thesection.\arabic{theorem}}
\renewcommand{\theproposition}{\thesection.\arabic{proposition}}
\renewcommand{\thelemma}{\thesection.\arabic{lemma}}
\renewcommand{\thecorollary}{\thesection.\arabic{corollary}}
\renewcommand{\theassumption}{\thesection.\arabic{assumption}}

\renewcommand{\thetable}{\thesection.\arabic{table}}
\renewcommand{\thefigure}{\thesection.\arabic{figure}}

\section{Prerequisites}
\subsection{Stable convergence}\label{sec:stab}
In this section, we briefly outline the concept of stable convergence, and provide some useful technical results for some of the later proofs, in particular for the multivariate result. We recommend \cite{Hausler2015} for a detailed exposition of stable convergence and limit theorems, and the discussion provided in \cite{Crimaldi2009, Fortini2020,Fortini2024} for the use of stable convergence within predictive asymptotics. 

Stable convergence, as introduced in \citet{Renyi1963}, is a notion of convergence of random variables that is stronger than convergence in distribution and weaker than convergence in probability. It plays a particularly strong role in martingale central limit theorems, where normings in the central limit theorem can often have random limits.
For studying the predictive asymptotics of $(\theta_\infty - \theta_N)$ through predictive resampling, the limiting distribution may have random components (e.g. the variance) as the parameter that we are converging to is random. This is in stark contrast to classical asymptotics, where $\theta$ is converging to some fixed $\theta^*$, and normings usually have deterministic limits.

Let $(X_N)_{N \geq 1}$ and $({\eta}_N)_{N \geq 1}$ denote sequences of random variables. Suppose there exists another random variable $\eta$, which is positive almost surely, such that ${\eta}_N \to \eta$ in probability.  Intuitively, stable convergence is exactly what allows us to replace a statement like
$X_N \to \mathcal{N}(0,\eta^2)$ with ${\eta}_N^{-1}X_N \to \mathcal{N}(0,1)$. In the case where $\eta$ is a constant and $X_N$ converges in distribution, the above is just an application of Slutsky's theorem, where ${\eta}_N$ is a weakly consistent estimator of the unknown $\eta$. However, when $\eta$ is random, convergence of distribution is insufficiently strong for this interchanging of normings, with stable convergence being the appropriate strengthening; see \citet[Chapter 1]{Hausler2015} for some examples on this. The interchanging of normings is key to writing limit theorems in a form that is useful for statistical inference, or in our case for approximating posterior sampling.

We now provide a few definitions with sufficient generality for our needs. Let $(\Omega,\mathcal{F}, P)$ be a probability space, and $\mathcal{G}\subseteq \mathcal{F}$ be a sub-$\sigma$-algebra. Furthermore, let $(X_N)_{N \geq 1}$ and $X$ denote $(\mathbb{R}, \mathcal{B}(\mathbb{R}))$-valued random variables on the same probability space. We say that $X_N \to X$ $\mathcal{G}$-stably if for every $f \in L_1(P)$ and $h \in C_B(\mathbb{R})$, we have
\begin{align*}
  E\left[fE\left\{h(X_n) \mid \mathcal{G}\right\}\right] \to  E\left[fE\left\{h(X) \mid \mathcal{G}\right\}\right].
\end{align*}
If in addition to stability, we have that $\sigma(X)$ and $\mathcal{G}$ are independent, then we say that $X_N \to X$ $\mathcal{G}$-mixing. There are many different equivalent definitions of stable convergence, e.g. \citet[Theorem 3.17]{Hausler2015}. In terms of conditional probability distributions, the following condition may be easier to interpret. For each $F \in  \mathcal{G}$ with $P(F) > 0$,  assume that for each continuity point $x$ of $X$, we have
\begin{align*}
P(X_n\leq x \mid F) \to P(X \leq x \mid F)
\end{align*}
where $P(\cdot \mid F) = P(\cdot \cap F )/P(F)$ is the conditional probability given $F$.
It is clear that this is a strengthening of convergence in distribution, as choosing $\mathcal{G} = \{\emptyset, \Omega\}$ returns us the regular convergence in distribution as $P(\cdot \mid{\Omega}) = P(\cdot)$. Stable convergence then strengthens convergence in distribution by choosing a larger sub-$\sigma$-algebra $\mathcal{G}$.
Fortunately, the convergence in martingale central limit theorems is often already shown to be stable, so we will not need to check this directly.

More important to us are the consequences of stable convergence. In particular, we will need the following three results. The first is the continuous mapping result for stable convergence.
The second is exactly the analogy of Slutsky's theorem when the norming is random, and the third is a version of the Cram\'{e}r-Wold theorem, which we will need for the multivariate case later on. We present specialized versions of the results for convenience, with the general case in reference.

\begin{proposition}[{{\citet[Theorem 3.18]{Hausler2015}}}]\label{app:prop:cont_map_stable}
    Let $X_n$ and $X$ be $(\mathbb{R}^d, \mathcal{B}(\mathbb{R}^d))$-valued random variables, where $X_n \to X$ $\mathcal{G}$-stably. If $g: \mathbb{R}^d \to \mathbb{R}^p$ is a measurable  $P_X$-almost surely continuous function, then 
\begin{align*}
    g(X_n) \to g(X)
\end{align*}
$\mathcal{G}$-stably.
\end{proposition}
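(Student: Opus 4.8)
The plan is to unwind the definition of $\mathcal{G}$-stable convergence and reduce the claim to the classical continuous mapping theorem for weak convergence on $\R^d$. Fix $\phi \in C_B(\R^p)$; the goal is to show that $E[f\,E\{\phi(g(X_n))\mid\mathcal{G}\}] \to E[f\,E\{\phi(g(X))\mid\mathcal{G}\}]$ for every $f \in L_1(P)$. Using the identity $E[f\,E\{\psi\mid\mathcal{G}\}] = E[E\{f\mid\mathcal{G}\}\,\psi]$, valid for any bounded $\psi$, it suffices to establish $E[f\,\phi(g(X_n))] \to E[f\,\phi(g(X))]$ for bounded $\mathcal{G}$-measurable $f$, and by the decomposition $f = f^+ - f^-$ we may assume $f \ge 0$; the general $L_1$ case then follows by standard approximation. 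The essential difficulty is that $h := \phi\circ g$ is bounded and measurable but only $P_X$-almost surely continuous, whereas the definition of stable convergence supplies convergence only against genuinely continuous bounded test functions. Bridging this gap is the crux of the argument.

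First I would introduce the finite measures on $(\R^d,\mathcal{B}(\R^d))$ defined by $\nu_n(B) = E[f\,\Ind_{\{X_n\in B\}}]$ and $\nu(B) = E[f\,\Ind_{\{X\in B\}}]$. Applying the definition of $\mathcal{G}$-stable convergence to each $h \in C_B(\R^d)$, with the fixed $\mathcal{G}$-measurable $f$, gives $\int h\,d\nu_n \to \int h\,d\nu$, so $\nu_n \to \nu$ weakly; taking $h \equiv 1$ shows they share the common total mass $E[f]$. Assuming $E[f] > 0$, the normalised measures $\bar\nu_n = \nu_n/E[f]$ and $\bar\nu = \nu/E[f]$ are probability measures with $\bar\nu_n \Rightarrow \bar\nu$.

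The key step transfers the negligibility of the discontinuity set from $P_X$ to $\bar\nu$. Let $D_g$ denote the discontinuity set of $g$, and note that the discontinuity set of $h = \phi\circ g$ is contained in $D_g$ since $\phi$ is continuous everywhere. Because $\nu(B) = \int_B E(f\mid X=x)\,P_X(dx)$ we have $\nu \ll P_X$, and hence $\bar\nu(D_g) = 0$ from the hypothesis $P_X(D_g) = 0$. Thus $g$ is continuous $\bar\nu$-almost everywhere, and the classical continuous mapping theorem for weak convergence on $\R^d$ (via the Skorokhod-representation coupling, or Billingsley's mapping theorem) yields $g_*\bar\nu_n \Rightarrow g_*\bar\nu$ on $\R^p$. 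In particular $\int \phi\,d(g_*\bar\nu_n) \to \int \phi\,d(g_*\bar\nu)$, which after multiplying by $E[f]$ and rewriting is exactly $E[f\,\phi(g(X_n))] \to E[f\,\phi(g(X))]$.

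Reassembling the pieces, reinstating general $f \in L_1(P)$ via the conditional-expectation identity, establishes $g(X_n) \to g(X)$ $\mathcal{G}$-stably. The main obstacle, and the only genuinely analytic input, is the step where $h = \phi\circ g$ is merely almost-everywhere continuous rather than continuous: this is precisely where the absolute-continuity observation $\nu \ll P_X$ is indispensable, since it is what upgrades the hypothesis $P_X(D_g) = 0$ to $\bar\nu(D_g) = 0$ and thereby licenses the classical mapping theorem. Everything else is routine bookkeeping with the definition of stable convergence.
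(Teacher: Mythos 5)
Your proof is correct, but note that the paper does not actually prove this statement: Proposition \ref{app:prop:cont_map_stable} is imported verbatim from \citet[Theorem 3.18]{Hausler2015} and used as a black box, so there is no internal argument to compare against. What you have written is essentially the standard textbook derivation: reduce via the tower identity to $\mathcal{G}$-measurable integrands, encode the stable convergence as weak convergence of the $f$-weighted laws $\nu_n(B)=E[f\,\Ind_{\{X_n\in B\}}]$, observe that $\nu\ll P_X$ so the discontinuity set of $g$ remains null, and invoke Billingsley's mapping theorem. The absolute-continuity step is indeed the crux, and you have identified it correctly. Two minor points worth making explicit if this were to be written out in full: the discontinuity set $D_g$ of an arbitrary function is an $F_\sigma$ and hence Borel, so $\bar\nu(D_g)$ is well defined; and the case $E[f]=0$ is trivial since $f\geq 0$ forces $f=0$ almost surely, making both sides vanish. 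With those remarks the argument is complete and self-contained, which is arguably more than the paper itself provides for this prerequisite.
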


\begin{proposition}[{{\citet[Theorem 3.18, Corollary 6.3]{Hausler2015}}}]\label{app:prop:slutsky_stable}
Let $X_n$ and $X$ be $(\mathbb{R}^d, \mathcal{B}(\mathbb{R}^d))$-valued random variables, where $X_n \to X$ $\mathcal{G}$-stably. Furthermore, let $Y_n$ and $Y$ be $(\mathbb{R}, \mathcal{B}(\mathbb{R}))$-valued random variables, and suppose $Y_n \to Y$ in probability where $Y$ is $\mathcal{G}$-measurable. We then have
\begin{align*}
    (X_n,Y_n) \to (X,Y)
\end{align*}
$\mathcal{G}$-stably. Continuous mapping then implies
\begin{align*}
    X_n Y_n \to XY
\end{align*}
$\mathcal{G}$-stably. Furthermore, if $Y > 0$ almost surely, then we have
\begin{align*}
    \frac{X_n}{Y_n} \to \frac{X}{Y}
\end{align*}
$\mathcal{G}$-stably.
\end{proposition}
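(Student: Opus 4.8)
The plan is to prove the three assertions in the order stated, since each relies on the previous one. The substantive step is the joint stable convergence $(X_n,Y_n)\to(X,Y)$; the product and quotient claims then follow mechanically from the continuous mapping theorem for stable convergence, Proposition~\ref{app:prop:cont_map_stable}. Since the whole statement is a specialized restatement of \citet[Theorem 3.18, Corollary 6.3]{Hausler2015}, the goal is to exhibit the structure rather than reprove the general machinery.

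For the joint convergence I would proceed in two stages. First, since $Y$ is $\mathcal{G}$-measurable, I would show that pairing $X_n$ with the fixed limit $Y$ preserves stability, i.e. $(X_n,Y)\to(X,Y)$ $\mathcal{G}$-stably. Working from the defining condition, for $f\in L_1(P)$ and $h\in C_B(\mathbb{R}^{d+1})$ it suffices to treat product test functions $h(x,y)=h_1(x)h_2(y)$ by a standard approximation argument; then $\mathcal{G}$-measurability of $Y$ lets $h_2(Y)$ be pulled out of the conditional expectation, so that $E\{h_1(X_n)h_2(Y)\mid\mathcal{G}\}=h_2(Y)\,E\{h_1(X_n)\mid\mathcal{G}\}$, and absorbing $h_2(Y)$ into the test variable $f'=fh_2(Y)\in L_1(P)$ reduces the claim to the assumed stable convergence of $X_n$. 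Second, I would upgrade $(X_n,Y)$ to $(X_n,Y_n)$ using $Y_n\to Y$ in probability: since the perturbation $Y_n-Y\to0$ in probability, a Slutsky-type estimate against bounded uniformly continuous test functions shows it does not affect the stable limit. This two-stage argument is precisely where the $\mathcal{G}$-measurability of $Y$ is indispensable, and I expect it to be the main obstacle; without it one obtains joint convergence in distribution but not stability, which is too weak for the interchanging of normings that motivates the result.

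With joint stability in hand, the product claim follows by applying Proposition~\ref{app:prop:cont_map_stable} to $g(x,y)=xy$, which is continuous on all of $\mathbb{R}^d\times\mathbb{R}$ and hence trivially $P_{(X,Y)}$-almost surely continuous, giving $X_nY_n\to XY$ $\mathcal{G}$-stably. For the quotient, I would apply the same proposition to $g(x,y)=x/y$, which is continuous on the open set $\{y\neq0\}$; since $Y>0$ almost surely, the law $P_{(X,Y)}$ is concentrated on $\{y>0\}\subseteq\{y\neq0\}$, so $g$ is $P_{(X,Y)}$-almost surely continuous and Proposition~\ref{app:prop:cont_map_stable} yields $X_n/Y_n\to X/Y$ $\mathcal{G}$-stably. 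The remaining details of both continuous mapping applications are routine.
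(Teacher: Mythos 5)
The paper does not actually prove this proposition: it is quoted directly from \citet[Theorem 3.18, Corollary 6.3]{Hausler2015} and used as a black box (e.g.\ in the proofs of Theorems \ref{app:th:tailsum_CLT_final} and \ref{th:pred_CLT_mv}), so there is no in-paper argument to compare yours against. Your sketch is a correct and standard route to the result, and it isolates the right key point: the $\mathcal{G}$-measurability of $Y$ is precisely what lets $h_2(Y)$ be pulled through the conditional expectation and absorbed into the $L_1(P)$ test function, which is why stability --- rather than mere joint convergence in distribution --- survives the pairing; the subsequent perturbation by $Y_n - Y \to 0$ in probability and the two applications of Proposition \ref{app:prop:cont_map_stable} are routine, as you say. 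Two details would need a sentence each in a full write-up, though neither is a gap in the logic. First, the reduction to product test functions $h_1(x)h_2(y)$ relies on their linear span being convergence-determining on $\mathbb{R}^{d+1}$; the uniform approximation of a general $h \in C_B(\mathbb{R}^{d+1})$ by such combinations only holds on compact sets, so you need tightness of the laws of $(X_n, Y)$ to control the remainder --- this follows since stable convergence of $X_n$ implies convergence in distribution (take $f \equiv 1$), hence tightness. Second, for the quotient you must fix a measurable extension of $g(x,y) = x/y$ on $\{y = 0\}$ so that Proposition \ref{app:prop:cont_map_stable} applies, and strictly speaking nothing in the hypotheses forces $Y_n \neq 0$ for finite $n$, so $X_n/Y_n$ is only defined via that same convention; this is a blemish of the statement as quoted rather than of your argument. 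What the paper's citation buys is economy and deference to a careful reference; what your sketch buys is a self-contained account that makes visible exactly where the $\mathcal{G}$-measurability hypothesis is consumed.
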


\begin{proposition}[{{\citet[Corollary 3.19]{Hausler2015}}}]\label{app:prop:cramwold_stable}
Let $X_n$ and $X$ be $(\mathbb{R}^d, \mathcal{B}(\mathbb{R}^d))$-valued random variables, where $X_n \to X$ $\mathcal{G}$-stably. Suppose we have
\begin{align*}
    t^T X_n \to  t^T X
\end{align*}
$\mathcal{G}$-stably for every $t \in \mathbb{R}^d$. Then $X_n \to X$ $\mathcal{G}$-stably.
\end{proposition}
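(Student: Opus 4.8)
The statement is the stable-convergence analogue of the Cram\'er--Wold device, and I would prove it by recasting stable convergence in terms of conditional characteristic functionals and then pulling the one-dimensional hypothesis back through the linear maps $x \mapsto t^T x$. The plan splits cleanly into a reformulation, a determining-class reduction, and a short pull-back argument.

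First I would rewrite the definition in a form amenable to characteristic functions. By the tower property, $E[f\,E\{h(X_n)\mid \mathcal{G}\}] = E[E\{f\mid\mathcal{G}\}\,h(X_n)]$, and as $f$ ranges over $L_1(P)$ the weight $E\{f\mid\mathcal{G}\}$ ranges over all $\mathcal{G}$-measurable $L_1$ functions; a standard truncation then shows $X_n \to X$ $\mathcal{G}$-stably is equivalent to $E\{Z\,h(X_n)\} \to E\{Z\,h(X)\}$ for every bounded $\mathcal{G}$-measurable $Z$ and every $h \in C_B(\mathbb{R}^d)$. Next I would invoke that the complex exponentials $x \mapsto \exp(\mathrm{i}\,t^T x)$ form a convergence-determining class, so that to verify stable convergence it suffices to check, for every $t \in \mathbb{R}^d$ and every bounded $\mathcal{G}$-measurable $Z$, that $E\{Z\exp(\mathrm{i}\,t^T X_n)\} \to E\{Z\exp(\mathrm{i}\,t^T X)\}$. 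This is the stable analogue of L\'evy's continuity theorem; crucially, because the candidate limit $X$ and its $\mathcal{G}$-conditional law are given in advance, only the determination half is needed, so no tightness argument is required to extract a limiting object.

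The pull-back is then immediate. Fixing $t$ and writing $g(y) = \exp(\mathrm{i} y)$, whose real and imaginary parts lie in $C_B(\mathbb{R})$, we have $\exp(\mathrm{i}\,t^T X_n) = g(t^T X_n)$. Applying the hypothesis that $t^T X_n \to t^T X$ $\mathcal{G}$-stably to the scalar test function $g$ and the weight $Z$ yields $E\{Z\,g(t^T X_n)\} \to E\{Z\,g(t^T X)\}$, which is exactly the exponential convergence required. Since $t$ was arbitrary, this holds for all $t$, and the reformulation then delivers $X_n \to X$ $\mathcal{G}$-stably.

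The main obstacle is the determining-class passage: upgrading convergence of the conditional characteristic functionals (exponentials only) to convergence against all of $C_B(\mathbb{R}^d)$. In the stable setting the limit is a Markov kernel (a random probability measure) rather than a fixed law, so the usual L\'evy continuity argument must be run ``weighted by $Z$'' and uniformly over $\mathcal{G}$. This is precisely what the Portmanteau-type equivalences and continuity theorems of \citet[Chapter 3]{Hausler2015} package, and I would cite that machinery rather than reprove it; once it is in hand, the remaining steps are routine.
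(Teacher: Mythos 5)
Your proposal is correct. Note, however, that the paper does not prove this proposition at all: it is stated as an imported result with a pointer to \citet[Corollary 3.19]{Hausler2015}, so there is no in-paper argument to compare against --- what you have written is essentially the textbook proof of the cited corollary. Your three steps are all sound: the reformulation with bounded $\mathcal{G}$-measurable weights $Z$ in place of $E\{f \mid \mathcal{G}\}$ is justified by the tower property plus truncation (uniformly in $n$, since $h$ is bounded); the pull-back through $y \mapsto \exp(\mathrm{i}y)$ is immediate from the scalar hypothesis applied to $\cos$ and $\sin$; and the determining-class step is the only place where real work is hidden. If you want to make that step self-contained rather than citing the machinery of \citet[Chapter 3]{Hausler2015}, the cleanest route is a change of measure: for $Z \geq 0$ with $E(Z) = 1$, the laws of $X_n$ under $Z\,dP$ have characteristic functions $E\{Z\exp(\mathrm{i}\,t^T X_n)\}$ converging pointwise to the characteristic function of the law of $X$ under $Z\,dP$, which is a genuine probability measure, so the classical L\'evy continuity theorem (whose tightness half is then automatic, as you observe) gives $E\{Z h(X_n)\} \to E\{Z h(X)\}$ for all $h \in C_B(\mathbb{R}^d)$; general bounded $Z$ follows by splitting into positive and negative parts and renormalizing. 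One last remark on the statement itself rather than your proof: as printed, the proposition's first sentence already hypothesizes $X_n \to X$ $\mathcal{G}$-stably, which makes the conclusion vacuous; that clause is evidently a copy-paste slip and should be deleted, and your proof correctly treats only the linear-functional hypothesis as given.
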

In this work, we will only be concerned with the stable convergence of martingales. Let $\left(S_N, \mathcal{F}_N\right)_{N \geq 1}$ be a martingale bounded in $L_2$, and let $\mathcal{F}_\infty = \sigma\left(\cup_{N\geq 1}\mathcal{F}_N\right)$. In our work, we will let $\mathcal{G} = \mathcal{F}_\infty$, the random variable $X_n$ will be a scaled version of $S_N$, and $Y$ will be the sum of conditional variances of the martingale.  
An excellent detailed exposition of stable limit theorems for martingales is provided in \citet[Chapter 6]{Hausler2015}. 

\subsection{Tail sum martingales}\label{sec:app_tail}
In this subsection, we introduce a few standard results in connection with  martingale central limit theorems for tail sums. These results can be used to study the convergence of P\'{o}lya urns, so it is unsurprising that they will help us to study the predictive asymptotics of the parametric martingale posterior.

We first introduce some notation for tail sum martingales.
Let $\left(S_N, \mathcal{F}_N\right)_{N \geq 1}$ be a martingale bounded in $L_2$, and let $\mathcal{F}_\infty = \sigma\left(\cup_{N\geq 1}\mathcal{F}_N\right)$. Denote the martingale differences by $X_i = S_i - S_{i-1}$, where $X_0  = S_0$ is a constant, and define the conditional and unconditional variance respectively:
\begin{align}\label{app:eq_condit_var}
V_N^2 = \sum_{i=N+1}^\infty E\left(X_i^2 \mid \mathcal{F}_{i-1}\right), \quad s_N^2 = \sum_{i=N+1}^\infty E\left(X_i^2\right).
\end{align}
Both terms above are finite (almost surely for $V_N^2$) as the martingale is bounded in $L_2$. The martingale central limit theorem for tail sums has taken on quite a few different forms with various assumptions and choice of norming.
\cite{Heyde1977,Johnstone1978} and \citet[Corollary 3.5]{Hall2014} are some earlier examples. A detailed comparison of different forms of assumptions is given in \citet[Chapter 3.2]{Hall2014} and \citet[Chapter 6.3]{Hausler2015}.
For our purposes, the following version based on the conditional variance $V_N^2$ and a conditional Lindeberg assumption will be the most helpful.
\begin{theorem}[{{\citet[Exercise 6.7]{Hausler2015}}}]\label{app:th:tailsum_CLT}
Let $\left(S_N, \mathcal{F}_N\right)_{N \geq 1}$ be a martingale bounded in $L_2$ with martingale differences $X_i$, and let $\mathcal{F}_\infty = \sigma\left(\cup_{N\geq 1}\mathcal{F}_N\right)$. Furthermore, let $V_N^2$ and $s_N^2$ be defined as in (\ref{app:eq_condit_var}).
Assume that
\begin{align}\label{app:tailsum_as1}
        s_N^{-2}\sum_{i = N+1}^\infty E\left\{X_i^2 \mathbbm{1}\left(X_i^2> \varepsilon^2 s_N^2\right) \mid \mathcal{F}_{i-1}\right\} \to 0 
\end{align}
in probability for all $\varepsilon > 0$, and
\begin{align*}
    s_N^{-2}V_N^2 \to \eta^2 
\end{align*}
in probability for some random variable $\eta^2$ where $\eta^2 >0$ almost surely. Under the above assumptions, we have that
\begin{align}\label{app:tailsum_res1}
    s_N^{-1}\sum_{i = N+1}^\infty X_i \to \mathcal{N}(0,\eta^2)
\end{align}
$\mathcal{F}_\infty$-stably. 
\end{theorem}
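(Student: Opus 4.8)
The plan is to reduce this tail-sum statement to the standard forward stable martingale central limit theorem for arrays by a time reindexing. First I would record the consequences of $L_2$-boundedness: by the martingale convergence theorem $S_N \to S_\infty$ almost surely and in $L_2$, so the tail sum $\sum_{i=N+1}^\infty X_i = S_\infty - S_N$ is well defined, orthogonality of the increments gives $E\{(S_\infty - S_N)^2\} = s_N^2$, and the target becomes $s_N^{-1}(S_\infty - S_N) \to \mathcal{N}(0,\eta^2)$ $\mathcal{F}_\infty$-stably. Since each $E(X_i^2\mid\mathcal{F}_{i-1})$ is $\mathcal{F}_\infty$-measurable, so is $\eta^2$, and hence $\mathcal{N}(0,\eta^2)$ is a genuine $\mathcal{F}_\infty$-measurable mixed normal, which is what makes $\mathcal{F}_\infty$-stability meaningful.

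Second, for each $N$ I would form the (infinite-row) martingale array $X_{N,k} = s_N^{-1} X_{N+k}$ adapted to $\mathcal{G}_{N,k} = \mathcal{F}_{N+k}$ for $k \ge 1$. Its row sum is $\sum_{k\ge 1} X_{N,k} = s_N^{-1}(S_\infty - S_N)$, and because the original filtration is increasing the array satisfies the nesting requirement $\mathcal{G}_{N,k}\subseteq \mathcal{G}_{N+1,k}$, while the initial $\sigma$-fields $\mathcal{G}_{N,0}=\mathcal{F}_N$ increase to $\mathcal{F}_\infty$. Under this reindexing the two hypotheses translate verbatim: the sum of conditional variances of row $N$ is $\sum_{k\ge1} E(X_{N,k}^2\mid\mathcal{G}_{N,k-1}) = s_N^{-2}V_N^2 \to \eta^2$ in probability, and since $|X_{N,k}|>\varepsilon$ is the event $X_{N+k}^2 > \varepsilon^2 s_N^2$, the array's conditional Lindeberg sum is exactly (\ref{app:tailsum_as1}), which tends to $0$ in probability.

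Third, to pass from these conditions to the conclusion I would truncate each row at a horizon $M_N$ chosen so that $s_{N+M_N}/s_N \to 0$; this is possible since $s_N^2 = \sum_{i>N}E(X_i^2)$ is the tail of the convergent series $\sum_i E(X_i^2)=E(S_\infty^2)<\infty$ and hence $s_N^2\downarrow 0$, so one may pick $M_N$ with $s_{N+M_N}^2\le s_N^2/N$. I would then apply the finite-row stable martingale array central limit theorem (e.g. \citet[Theorem 3.2]{Hall2014} or its counterpart in \citet{Hausler2015}) to $\{X_{N,k}:1\le k\le M_N\}$ and control the discarded remainder. The truncated conditional variance is $s_N^{-2}(V_N^2 - V_{N+M_N}^2)\to\eta^2$, because $s_N^{-2}V_{N+M_N}^2 = (s_{N+M_N}^2/s_N^2)(V_{N+M_N}^2/s_{N+M_N}^2)\to 0\cdot\eta^2=0$, while truncation only shrinks the nonnegative Lindeberg sum; thus the finite-row theorem gives stable convergence of the truncated sum to $\mathcal{N}(0,\eta^2)$. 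The remainder $s_N^{-1}(S_\infty - S_{N+M_N})$ has $L_2$-norm $s_{N+M_N}/s_N\to0$, so it vanishes in probability and Proposition \ref{app:prop:slutsky_stable} (the stable form of Slutsky's theorem) transfers the stable limit to the full row sum $s_N^{-1}(S_\infty - S_N)$. Since the initial $\sigma$-fields increase to $\mathcal{F}_\infty$, the stability is with respect to $\mathcal{F}_\infty$, yielding (\ref{app:tailsum_res1}).

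The main obstacle is precisely that the rows are genuinely infinite: a naive truncation at a fixed horizon $M$ fails because $s_{N+M}^2/s_N^2\to 1$, so the discarded tail is never asymptotically negligible. The resolution is the growing horizon $M_N$ together with the conditional-variance hypothesis, which is exactly what certifies that the far tail of each row carries vanishing variance. Verifying the remaining auxiliary conditions of the finite-row array theorem (negligibility of the maximal jump and boundedness of $E(\max_k X_{N,k}^2)$) from the conditional Lindeberg condition under $L_2$-boundedness is routine, and is where most of the residual bookkeeping lies.
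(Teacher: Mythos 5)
This statement is one the paper does not prove at all: it is imported verbatim as \citet[Exercise 6.7]{Hausler2015} (equivalently \citet[Corollary 3.5]{Hall2014}), so there is no in-paper argument to compare against. Your proof is, as far as I can check, a correct solution of that exercise, and it follows the natural route: the reindexing $X_{N,k}=s_N^{-1}X_{N+k}$, $\mathcal{G}_{N,k}=\mathcal{F}_{N+k}$ does turn the two hypotheses into exactly the conditional Lindeberg and conditional-variance conditions of the forward array theorem, the nesting $\mathcal{G}_{N,k}\subseteq\mathcal{G}_{N+1,k}$ is automatic from monotonicity of the original filtration, and your growing horizon $M_N$ with $s_{N+M_N}^2\leq s_N^2/N$ is exactly what is needed to make the discarded tail $s_N^{-1}(S_\infty-S_{N+M_N})$ vanish in $L_2$ while keeping the truncated conditional variance converging to $\eta^2$ (your factorization $s_N^{-2}V_{N+M_N}^2=(s_{N+M_N}^2/s_N^2)(V_{N+M_N}^2/s_{N+M_N}^2)$ is the right way to see this). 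You correctly identify the one genuine obstacle, namely that a fixed truncation horizon fails because $s_{N+M}^2/s_N^2\to1$. The "residual bookkeeping" you defer is indeed routine: the finite-row stable theorem under conditional Lindeberg, conditional variance convergence and nesting is available off the shelf as \citet[Corollary 3.1]{Hall2014}, so you need not re-derive the maximal-jump and $E(\max_k X_{N,k}^2)$ conditions yourself. The only alternative worth noting is that the truncation step can be absorbed entirely by invoking a stable central limit theorem for one-sided \emph{infinite} martingale arrays directly (\citet[Theorem 6.1]{Hausler2015}; the paper states a specialization as Theorem \ref{app:th:infinite_array_clt}), applied to the array $(X_{N,k})_{k\geq1}$ without cutting the rows; that is presumably the intended solution of the cited exercise, and it trades your explicit $M_N$ construction for the (already proved) general theorem. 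Both routes are sound and deliver the same $\mathcal{F}_\infty$-stability, since $\sigma(\cup_{N,k}\mathcal{G}_{N,k})=\mathcal{F}_\infty$.
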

In our use cases, the unconditional Lindeberg condition will be easier to check, which we can show implies the conditional Lindeberg condition as given below.
\begin{lemma}[Unconditional Lindeberg condition]\label{app:lem:uncondit_lind}
Consider following the unconditional Lindeberg condition. Assume that for all $\varepsilon > 0$,
    \begin{align} \label{app:tailsum_as3} 
s_N^{-2}\sum_{i = N+1}^\infty E\left\{X_i^2 \mathbbm{1}\left(X_i^2> \varepsilon^2 s_N^2\right)\right\} \to 0.
    \end{align}
    Then (\ref{app:tailsum_as1}) holds true.
\end{lemma}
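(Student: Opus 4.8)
The plan is to derive the conditional Lindeberg condition (\ref{app:tailsum_as1}) from the unconditional one (\ref{app:tailsum_as3}) by showing that the former quantity converges to zero in $L_1$, which is stronger than the required convergence in probability. Fix $\varepsilon > 0$ and abbreviate the conditional Lindeberg sum by
\[
L_N = s_N^{-2}\sum_{i=N+1}^\infty E\left\{X_i^2 \mathbbm{1}\left(X_i^2 > \varepsilon^2 s_N^2\right) \mid \mathcal{F}_{i-1}\right\}.
\]
First I would check that $L_N$ is a well-defined non-negative random variable. Since the martingale is bounded in $L_2$, the normalizing constant $s_N^2 = \sum_{i>N} E(X_i^2)$ is finite and, crucially, \emph{deterministic}; hence the event $\{X_i^2 > \varepsilon^2 s_N^2\}$ is $\mathcal{F}_i$-measurable, each conditional expectation is well-defined and non-negative, and $L_N \geq 0$.

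The key step is to observe that the expectation of $L_N$ is exactly the unconditional Lindeberg sum appearing in (\ref{app:tailsum_as3}). Because every summand is non-negative, I would invoke Tonelli's theorem to interchange the expectation with the infinite sum, and then apply the tower property to each term:
\[
E(L_N) = s_N^{-2}\sum_{i=N+1}^\infty E\left[E\left\{X_i^2 \mathbbm{1}\left(X_i^2 > \varepsilon^2 s_N^2\right) \mid \mathcal{F}_{i-1}\right\}\right] = s_N^{-2}\sum_{i=N+1}^\infty E\left\{X_i^2 \mathbbm{1}\left(X_i^2 > \varepsilon^2 s_N^2\right)\right\}.
\]
By assumption (\ref{app:tailsum_as3}), the right-hand side tends to $0$, so $E(L_N) \to 0$ and therefore $L_N \to 0$ in $L_1$.

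Finally, I would upgrade $L_1$ convergence to convergence in probability via Markov's inequality: for every $\delta > 0$, $P(L_N > \delta) \leq \delta^{-1} E(L_N) \to 0$, which is exactly (\ref{app:tailsum_as1}). The argument is short, and the only genuine points to verify are the measurability of the truncation indicator---which hinges on $s_N^2$ being a deterministic threshold rather than a random one---and the exchange of expectation and the infinite tail sum, both of which follow immediately from non-negativity. I therefore do not anticipate a substantive obstacle beyond this bookkeeping.
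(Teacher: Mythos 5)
Your proposal is correct and follows essentially the same route as the paper: both apply Tonelli's theorem and the tower property to identify the expectation of the conditional Lindeberg sum with the unconditional one, then use non-negativity (via Markov's inequality) to pass from $L_1$ convergence to convergence in probability. Your write-up simply makes explicit the measurability and Markov steps that the paper leaves implicit.
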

\begin{proof}
The proof is based on \cite{Ganssler1978}, which we include for completion. See also \citet[Remark 6.8]{Hausler2015}. For each $\varepsilon > 0$, we have from     (\ref{app:tailsum_as3}):
    \begin{align*}
        s_N^{-2}\sum_{i = N+1}^{\infty} E\left\{X_i^2 \mathbbm{1}\left(X_i^2> \varepsilon^2 s_N^2\right)\right\} = E\left[s_N^{-2}\sum_{i = N+1}^{\infty} E\left\{X_i^2 \mathbbm{1}\left(X_i^2> \varepsilon^2 s_N^2\right)\mid \mathcal{F}_{i-1}\right\}\right]\to 0,
    \end{align*}
    where we have applied Tonelli's theorem. Since the inner term is non-negative, this implies 
    $$s_N^{-2}\sum_{i = N+1}^{\infty} E\left\{X_i^2 \mathbbm{1}\left(X_i^2> \varepsilon^2 s_N^2\right)\mid \mathcal{F}_{i-1}\right\}\to 0$$
    in probability.
\end{proof}

\subsection{Infinite martingale arrays}\label{sec:app_infmart}
For the Bernstein-von Mises result, we will require a different construction. In this case, we will have both $N \to \infty$ and $n \to \infty$, so an infinite martingale array is a natural choice. Martingale central limit theorems for infinite arrays is a straightforward generalization of the standard martingale central limit theorem, and is analogous to the classical Lindeberg-Feller central limit theorem, e.g. see \citet[Theorem 3.6]{Hall2014}. Stable convergence will not play as important of a role here, as we will generally have a limiting variance which is deterministic. 

We begin with some notation. For each $n \geq 1$, let $\left(S_{ni}, \mathcal{F}_{ni}\right)_{i \geq 1}$ denote a square-integrable martingale, so $\left(S_{ni}, \mathcal{F}_{ni}: i \geq 1, n \geq 1\right)$ is a square-integrable infinite martingale array.
We define $X_{ni} = S_{ni}- S_{n,i-1}$, where $X_{n0} = S_{n0}$ are constants, and  assume that
 \begin{align} \label{app:infin_array_as1} 
\sum_{j = 1}^\infty E\left(X_{nj}^2\right) < \infty
\end{align}
for each $n\geq 1$, so each row is a martingale bounded in $L_2$ as $s_n^2<\infty$.  This in turn implies that  for each $n$, there exists $S_{n\infty}$ such that  $S_{ni} \to S_{n\infty}$ almost surely and in $L_2$  as $i \to \infty$. Furthermore, the above guarantees the following are finite almost surely:
\begin{align}\label{app:eq_condit_var_n}
V_{n}^2 = \sum_{i=1}^\infty E\left(X_{ni}^2 \mid \mathcal{F}_{n,i-1}\right), \quad s_{n}^2 = \sum_{i= 1}^\infty E\left(X_{ni}^2\right).
\end{align}
 We now state the result martingale central limit theorem for one-sided infinite martingale arrays.
\begin{theorem}[{{\citet[Exercise 6.2]{Hausler2015}}}]\label{app:th:infinite_array_clt}
Let $\left(S_{ni}, \mathcal{F}_{ni}; i \geq 1, n \geq 1\right)$ be a square-integrable infinite martingale array, with martingale differences $X_{ni}$ which satisfy (\ref{app:infin_array_as1}). Furthermore, let $V_n^2$ and $s_n^2$ be defined according to (\ref{app:eq_condit_var_n}).
Assume that for all $\varepsilon > 0$,
 \begin{align*}
\sum_{i = 1}^{\infty} E\left\{X_{ni}^2 \mathbbm{1}\left(X_{ni}^2> \varepsilon^2 \right)\mid \mathcal{F}_{n,i-1}\right\} \to  0
\end{align*}
in probability, and
    \begin{align*} 
       V_{n}^2 \to \eta^2
\end{align*}
in probability, where $\eta^2 > 0$ is a positive constant almost surely. Under the above assumptions, we have that
\begin{align*}
    \sum_{i = 1}^{\infty} X_{ni}\to\mathcal{N}(0,\eta^2)
\end{align*}
in distribution.
\end{theorem}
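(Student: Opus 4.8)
The plan is to deduce this infinite one-sided array result from the classical martingale central limit theorem for finite triangular arrays (e.g., \citet[Theorem 3.2]{Hall2014}, or the finite-array results in \citet[Chapter 6]{Hausler2015}) by a diagonal truncation argument, controlling the discarded tail in $L_2$ and absorbing it via Slutsky's theorem. As preliminary facts, square-integrability together with (\ref{app:infin_array_as1}) guarantees that each row $(S_{ni})_{i \geq 1}$ converges almost surely and in $L_2$ to a limit $S_{n\infty}$, so that $\sum_{i=1}^\infty X_{ni} = S_{n\infty}$ is well defined; moreover the conditional and unconditional variances $V_n^2$ and $s_n^2$ in (\ref{app:eq_condit_var_n}) are finite (almost surely for $V_n^2$), with the partial conditional variances $V_{nm}^2 = \sum_{i=1}^m E(X_{ni}^2 \mid \mathcal{F}_{n,i-1})$ increasing to $V_n^2$ almost surely as $m \to \infty$.

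First I would select, for each $n$, a truncation level $i_n \to \infty$ that renders the tail of the array asymptotically negligible in two senses simultaneously. Since for each fixed $n$ both $\sum_{i > m} E(X_{ni}^2)$ and $\sum_{i > m} E(X_{ni}^2 \mid \mathcal{F}_{n,i-1})$ tend to $0$ (the former deterministically, the latter almost surely) as $m \to \infty$, I would choose $i_n$ large enough that $\sum_{i > i_n} E(X_{ni}^2) \leq n^{-1}$ and $P\{\sum_{i > i_n} E(X_{ni}^2 \mid \mathcal{F}_{n,i-1}) > n^{-1}\} \leq n^{-1}$. Writing $T_n = S_{n i_n} = \sum_{i=1}^{i_n} X_{ni}$ for the truncated partial sum, this selection ensures that the truncated conditional variance $V_{n i_n}^2 = V_n^2 - \sum_{i > i_n} E(X_{ni}^2 \mid \mathcal{F}_{n,i-1})$ still converges to $\eta^2$ in probability, since $V_n^2 \to \eta^2$ by hypothesis and the discarded tail vanishes in probability by construction.

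Next I would verify that the finite triangular array $(X_{ni})_{1 \leq i \leq i_n}$ meets the hypotheses of the finite-array theorem. The variance convergence was arranged above, and the conditional Lindeberg condition transfers for free: the truncated sum $\sum_{i=1}^{i_n} E\{X_{ni}^2 \mathbbm{1}(X_{ni}^2 > \varepsilon^2) \mid \mathcal{F}_{n,i-1}\}$ is nonnegative and dominated by the full infinite sum in the hypothesis, which tends to $0$ in probability, so it too tends to $0$ in probability. Applying the finite-array martingale central limit theorem then yields $T_n \to \mathcal{N}(0,\eta^2)$ in distribution, the limit being Gaussian with the deterministic variance $\eta^2$.

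Finally I would remove the truncation. By orthogonality of the martingale increments the remainder satisfies $E\{(S_{n\infty} - T_n)^2\} = \sum_{i > i_n} E(X_{ni}^2) \leq n^{-1} \to 0$, so $S_{n\infty} - T_n \to 0$ in $L_2$ and hence in probability; Slutsky's theorem then upgrades $T_n \to \mathcal{N}(0,\eta^2)$ to $\sum_{i=1}^\infty X_{ni} = S_{n\infty} \to \mathcal{N}(0,\eta^2)$, as claimed. The main obstacle is the diagonal selection of $i_n$: it must simultaneously suppress the conditional-variance tail in probability and the unconditional-variance tail in $L_2$ while preserving the variance-convergence and Lindeberg conditions of the finite-array theorem, and making this rigorous relies on metrizing convergence in probability (equivalently, a Borel--Cantelli-type rate) so that a single sequence $i_n$ handles all the tails at once.
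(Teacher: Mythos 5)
Your proposal is correct, but it proves the theorem by a genuinely different route than the paper. The paper's proof is essentially a citation: it invokes the infinite-array stable limit theorem of H\"{a}usler and Luschgy (their Theorem 6.1 / Exercise 6.2), observing that a constant $\eta^2$ is automatically measurable with respect to the relevant limiting $\sigma$-algebra, so the technical nesting condition $\mathcal{F}_{n,i}\subseteq\mathcal{F}_{n+1,i}$ can be ignored and stability can be downgraded to convergence in distribution. You instead reduce the infinite array to the classical \emph{finite} triangular-array martingale CLT by a diagonal truncation: choosing $i_n$ so that the discarded tail is $O(n^{-1})$ both in unconditional $L_2$ and (with probability $\geq 1-n^{-1}$) in conditional variance, checking that the conditional Lindeberg and variance conditions survive truncation (the former by domination, the latter because the discarded conditional-variance tail vanishes in probability), and then absorbing the $L_2$-negligible remainder $S_{n\infty}-S_{ni_n}$ via Slutsky. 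All of these steps are sound: the a.s.\ finiteness of $V_n^2$ justifies the tail selection, martingale orthogonality gives the $L_2$ bound on the remainder, and the constancy of $\eta^2$ is exactly what lets you apply the finite-array theorem without a nesting condition and lets Slutsky operate on mere convergence in distribution. What your route buys is self-containedness — it rests only on the standard finite-array result — at the cost of the diagonal bookkeeping; what the paper's route buys is brevity and, implicitly, the stronger stable-convergence statement for possibly random $\eta^2$, which your argument would not deliver without further work on the nesting structure across rows. For the theorem as stated, both are complete proofs.
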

\begin{proof}
    The above is shown in \citet[Exercise 6.2]{Hausler2015} for a general $\mathcal{G}_\infty$-measurable $\eta^2$, where $\mathcal{G}_\infty = \sigma\left(\cup_{n = 1}^\infty \cap_{m \geq n}\mathcal{F}_{m, k_n} \right)$. As $\eta^2$ is assumed to be constant, it is automatically $\mathcal{G}_\infty$-measurable \cite[Remark 6.2(b)]{Hausler2015}, so we can apply  \citet[Theorem 6.1]{Hausler2015} without the need to consider the technical nesting condition. We can then drop stability as the norming no longer has a random limit. 
\end{proof}

\setcounter{equation}{0}
\setcounter{theorem}{0}
\setcounter{proposition}{0}
\setcounter{lemma}{0}
\setcounter{corollary}{0}
\setcounter{assumption}{0}
\section{Proof of Proposition \ref{prop:Z4}}
We begin with a proof of Proposition \ref{prop:Z4}.
By assumption, we have $E\left(Z_N^4 \mid \mathcal{F}_{N-1}\right)  \leq  B+ C\theta_{N-1}^4 $ for all $N\geq 1$. H\"{o}lder's inequality gives
\begin{align*}
 E\left(|Z_N|^3 \mid \mathcal{F}_{N-1}\right)  \leq E\left(Z_N^4 \mid \mathcal{F}_{N-1}\right)^{{3}/{4}}, \quad 
    E\left(Z_N^2 \mid \mathcal{F}_{N-1}\right)\leq E\left(Z_N^4 \mid \mathcal{F}_{N-1}\right)^{{1}/{2}}.
\end{align*}
As we have $(a+b)^p \leq a^p + b^p$ for $0 < p < 1$ and $a,b > 0$, this gives
\begin{align*}
 E\left(|Z_N|^3 \mid \mathcal{F}_{N-1}\right)  \leq B^{{3}/{4}} + C^{{3}/{4}}\theta_{N-1}^3 , \quad 
    E\left(Z_N^2 \mid \mathcal{F}_{N-1}\right)\leq B^{{1}/{2}} +  C^{{1}/{2}}\theta_{N-1}^2 .
\end{align*}
We can then compute
\begin{align*}
    E\left(\theta_{N}^4\mid \mathcal{F}_{N-1}\right) &= E\left[ \left\{\theta_{N-1} + N^{-1} Z_{N}\right\}^4 \mid \mathcal{F}_{N-1}\right]\\
    &= \theta_{N-1}^4 + 6 N^{-2} \theta_{N-1}^2 E\left(Z_N^2 \mid \mathcal{F}_{N-1}\right) + 4 N^{-3}  \theta_{N-1} E\left(Z_N^3 \mid \mathcal{F}_{N-1}\right)\\
    &+ N^{-4} E\left(Z_N^4 \mid \mathcal{F}_{N-1}\right) 
\end{align*}
where we have used the fact that $E\left( Z_N \mid \mathcal{F}_{N-1}\right) = 0$. Plugging in the upper bounds,  the following holds for all $N \geq 1$:
\begin{align*}
     E\left(\theta_{N}^4\mid \mathcal{F}_{N-1}\right) &\leq \theta_{N-1}^4 \left(1 + 6 N^{-2} C^{{1}/{2}} +4 N^{-3} C^{{3}/{4}} + N^{-4} C\right)+ 6 N^{-2} B^{{1}/{2}}\theta^2_{N-1} \\
     &+ 4 N^{-3}B^{{3}/{4}} \theta_{N-1}+ N^{-4} B 
\end{align*}
One can show that both $\theta^2$ and $|\theta|$ are upper bounded by $\theta^4 + 1$, so we can write
\begin{align*}
     E\left(\theta_{N}^4\mid \mathcal{F}_{N-1}\right) &\leq \theta_{N-1}^4 \left\{1 + 6 N^{-2} \left(C^{{1}/{2}} + B^{{1}/{2}}\right) +4 N^{-3} \left(C^{{3}/{4}} + B^{{3}/{4}}\right) + N^{-4} C\right\}  + 6 N^{-2} B^{{1}/{2}} \\
     &+ 4N^{-3} B^{{3}/{4}} + N^{-4} B\\
     &\leq \theta_{N-1}^4 \left(1 + D_1 N^{-2}\right)  + D_2 N^{-2} 
\end{align*}
where there exists some non-negative $D_1,D_2 < \infty$ such that  the above holds for all $N \geq 1$. Iterated expectations give us
\begin{align*}
    E\left(\theta_N^4\right) &\leq \theta_n^4 \prod_{i = n+1}^N \left(1 + D_1i^{-2}\right) + D_2\sum_{i = n+1}^N i^{-2} \prod_{j = i+1}^N \left(1 + D_1j^{-2}\right).
\end{align*}
We have
\begin{align*}
    \prod_{i = n+1}^N \left(1 + D_1i^{-2}\right)  &= \exp\left\{\sum_{i = n+1}^N \log \left(1 + D_1i^{-2}\right)\right\} \leq  \exp\left(D_1\sum_{i = n+1}^N i^{-2}\right).
\end{align*}
As $\sum_{i = n+1}^\infty i^{-2} < \infty$, the above gives $\prod_{i = n+1}^\infty\left(1 + D_1i^{-2}\right) <\infty$, which in turn implies that there exists some non-negative $D_3,D_4 < \infty$ such that
\begin{align*}
     \sup_{N > n} E\left(\theta_N^4\right)t &\leq D_3\theta_n^4  + D_4 \sum_{i = n+1}^\infty i^{-2} < \infty.
\end{align*}
Finally, we have 
$$\sup_{N > n}E\left(Z_{N+1}^4\right) \leq  C\sup_{N > n}E\left(\theta_N^4\right) + B < \infty.$$
This is sufficient for $\left(Z_N^2\right)_{N > n}$ to be uniformly integrable.

\setcounter{equation}{0}
\setcounter{theorem}{0}
\setcounter{proposition}{0}
\setcounter{lemma}{0}
\setcounter{corollary}{0}
\setcounter{assumption}{0}
\section{Proof of Lemma \ref{lem:L2}}
   It is not hard to see that
 $E\left(Z_N^2 \mid \mathcal{F}_{N-1}\right) = \mathcal{I}(\theta_{N-1})^{-1}$.
   From Assumption \ref{as:UI}, $\left(Z_N^2\right)_{N > n}$ is in $L_1$, so its conditional expectation is uniformly integrable. For the second part, we highlight that
$\left(\mathcal{I}(\theta_{N-1})^{-1}\right)_{N > n}$ is in $L_1$, so there exists some $K< \infty$ such that $\sup_N E\left\{\left|\mathcal{I}(\theta_{N-1})^{-1}\right|\right\}\leq K$, giving us \begin{align*}
       E\left(\theta_N^2\right) &= \sum_{i = n+1}^N i^{-2} E\left\{\mathcal{I}(\theta_{N-1})^{-1}\right\}
        \leq K \sum_{i = n+1}^N i^{-2} < \infty.
   \end{align*}

\setcounter{equation}{0}
\setcounter{theorem}{0}
\setcounter{proposition}{0}
\setcounter{lemma}{0}
\setcounter{corollary}{0}
\setcounter{assumption}{0}
\section{Proof of Theorem \ref{th:pred_CLT}}\label{sec:app_predclt}
\subsection{Main theorem}
In this section, we cover the proof of Theorem \ref{th:pred_CLT} in detail. First, we require a specialised version of the martingale central limit theorem for our purpose based on Theorem \ref{app:th:tailsum_CLT}.
\begin{theorem}\label{app:th:tailsum_CLT_final}
Let $\left(S_N, \mathcal{F}_N\right)_{N \geq 1}$ be a martingale bounded in $L_2$ with martingale differences $X_i$, and let $\mathcal{F}_\infty = \sigma\left(\cup_{N\geq 1}\mathcal{F}_N\right)$. Furthermore, let $V_N^2$ and $s_N^2$ be defined as in (\ref{app:eq_condit_var}).
Assume that for all $\varepsilon > 0$,
 \begin{align} \label{app:tailsum_as1_final} 
s_N^{-2}\sum_{i = N+1}^\infty E\left\{X_i^2 \mathbbm{1}\left(X_i^2> \varepsilon^2 s_N^2\right)\right\} \to 0,
    \end{align}
    and 
    \begin{align} \label{app:tailsum_as2_final} 
        s_N^{-2}V_N^2 \to \eta^2
    \end{align}
    in probability, where $\eta^2$ is some random variable and $\eta^2 >0$ almost surely. Under the above assumptions, we have that
    \begin{align*}
       s_N^{-1}\sum_{i = N+1}^\infty X_i\to\mathcal{N}(0,\eta^2)
    \end{align*}
    $\mathcal{F}_\infty$-stably, and furthermore we have
\begin{align}\label{app:tailsum_res1_final}
   V_N^{-1} \sum_{i = N+1}^\infty X_i\to\mathcal{N}(0,1)
\end{align}
$\mathcal{F}_\infty$-mixing.
\end{theorem}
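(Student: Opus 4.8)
The plan is to deduce the $s_N$-normalized statement directly from the tail-sum martingale central limit theorem already in hand (Theorem \ref{app:th:tailsum_CLT}), and then to obtain the $V_N$-normalized, mixing version by dividing through and invoking the stable analogue of Slutsky's theorem. First I would dispatch the first display. Hypothesis (\ref{app:tailsum_as1_final}) is the unconditional Lindeberg condition, so Lemma \ref{app:lem:uncondit_lind} yields the conditional Lindeberg condition (\ref{app:tailsum_as1}); together with the variance-ratio hypothesis (\ref{app:tailsum_as2_final}), the assumptions of Theorem \ref{app:th:tailsum_CLT} are met, giving $s_N^{-1}\sum_{i=N+1}^\infty X_i \to \mathcal{N}(0,\eta^2)$ $\mathcal{F}_\infty$-stably. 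This is precisely the first claim. Crucially, the stable limit here is an $\mathcal{F}_\infty$-mixed normal: on a suitable extension of the probability space it admits the representation $\eta Z$, where $\eta = (\eta^2)^{1/2} \geq 0$ is $\mathcal{F}_\infty$-measurable and $Z \sim \mathcal{N}(0,1)$ is independent of $\mathcal{F}_\infty$.

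Next I would handle the denominator. Hypothesis (\ref{app:tailsum_as2_final}) states $s_N^{-2}V_N^2 \to \eta^2$ in probability; since $\eta^2 > 0$ almost surely, the continuous mapping theorem (applied to the square root) gives $s_N^{-1}V_N \to \eta$ in probability, and $\eta$ is $\mathcal{F}_\infty$-measurable as an in-probability limit of the $\mathcal{F}_N$-measurable quantities $s_N^{-1}V_N$. I would then write
\[
V_N^{-1}\sum_{i=N+1}^\infty X_i = \frac{s_N^{-1}\sum_{i=N+1}^\infty X_i}{s_N^{-1}V_N}
\]
and apply the stable Slutsky result (Proposition \ref{app:prop:slutsky_stable}): the numerator converges $\mathcal{F}_\infty$-stably to $\eta Z$, the denominator converges in probability to the $\mathcal{F}_\infty$-measurable, almost surely positive limit $\eta$, so the ratio converges $\mathcal{F}_\infty$-stably to $\eta Z/\eta = Z \sim \mathcal{N}(0,1)$. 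Because $Z$ is independent of $\mathcal{F}_\infty$, this stable convergence is in fact $\mathcal{F}_\infty$-mixing, which is exactly (\ref{app:tailsum_res1_final}).

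The main obstacle is conceptual rather than computational: one must verify that the stable limit appearing in the first step is genuinely the mixed normal $\eta Z$ built from the \emph{same} $\mathcal{F}_\infty$-measurable $\eta$ that governs the denominator, so that the cancellation $\eta Z/\eta = Z$ is legitimate and produces a limit independent of $\mathcal{F}_\infty$. This hinges on the fact that stable convergence, unlike ordinary convergence in distribution, preserves the joint behaviour of the normalized martingale tail sum together with $\mathcal{F}_\infty$-measurable quantities, which is precisely what licenses dividing by the random norming $s_N^{-1}V_N$ without destroying the convergence; the almost sure positivity $\eta^2 > 0$ is what makes this division well defined in the limit.
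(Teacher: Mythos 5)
Your proposal is correct and follows essentially the same route as the paper: Lemma \ref{app:lem:uncondit_lind} upgrades the unconditional Lindeberg condition to the conditional one so that Theorem \ref{app:th:tailsum_CLT} delivers the $s_N$-normalized stable limit, and the $V_N$-normalized mixing statement is then obtained by the random-norming/stable-Slutsky argument (Proposition \ref{app:prop:slutsky_stable}, i.e.\ Corollary 6.3 of H\"ausler and Luschgy), which is exactly what the paper invokes. Your explicit spelling-out of the mixed-normal representation $\eta Z$ and the cancellation $\eta Z/\eta = Z$ is just a more detailed rendering of the same step.
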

\begin{proof}
This is a direct application of Theorem \ref{app:th:tailsum_CLT}, where we use Lemma \ref{app:lem:uncondit_lind} to replace the assumption in  (\ref{app:tailsum_as1}) with (\ref{app:tailsum_as3}).
For the final statement (\ref{app:tailsum_res1_final}), as the convergence in (\ref{app:tailsum_res1}) is stable and $\eta^2 > 0$ almost surely by assumption, we can apply the same random norming argument as in \citet[Corollary 6.3]{Hausler2015} to show that
\begin{align*}
    V_N^{-1}\sum_{i =N+1}^\infty X_i \to \mathcal{N}(0,1)
\end{align*}
$\mathcal{F}_\infty$-mixing. 
\end{proof}
For a fixed dataset $Y_{1:n}$ and corresponding deterministic $\theta_n$, we define $S_N = \theta_N$ for $N \geq n$. Theorem \ref{th:pred_CLT} then follows if we can verify (\ref{app:tailsum_as1_final}) and (\ref{app:tailsum_as2_final}), where $\mathcal{F}_\infty$-mixing in (\ref{app:tailsum_res1_final}) implies convergence in distribution. As stated in the main paper, all of the following results will still hold if we replace $N^{-1}$ with a general sequence $N^{-1}$ which satisfies (\ref{eq:alpha}).

\subsection{Variance condition}
We first show the variance condition (\ref{app:tailsum_as2_final}), which is simpler.

\begin{lemma}\label{app:lem:var}
    Under Assumption \ref{as:UI}, for $ r_N^2 = \sum_{i = N+1}^\infty i^{-2}$, we have that
\begin{align}\label{app:eq:VN}
r_N^{-2}\,{V_N^2} \to \mathcal{I}(\theta_\infty)^{-1}
\end{align}
almost surely, and
\begin{align}\label{app:eq:SN}
r_N^{-2} s_N^2 \to E\left\{\mathcal{I}(\theta_\infty)^{-1}\right\}.
\end{align}
    Together, this gives
    \begin{align}\label{app:eq:prop_ratio}
        \frac{V_N^2}{s_N^2}\to \eta^2 =  \frac{\mathcal{I}(\theta_\infty)^{-1}}{E\left\{\mathcal{I}(\theta_\infty)^{-1}\right\}}
    \end{align}
    almost surely, where $\eta^2 > 0$ almost surely.
\end{lemma}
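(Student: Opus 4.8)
The plan is to first obtain closed-form expressions for the conditional and unconditional variances, then recognise both $r_N^{-2}V_N^2$ and $r_N^{-2}s_N^2$ as weighted averages of convergent quantities, and finish with a Toeplitz-type averaging argument.

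First I would record the one-step conditional second moment. From the proof of Lemma \ref{lem:L2} we have $E\left(Z_N^2 \mid \mathcal{F}_{N-1}\right) = \mathcal{I}(\theta_{N-1})^{-1}$, and since $X_i = i^{-1} Z_i$ this gives $E\left(X_i^2 \mid \mathcal{F}_{i-1}\right) = i^{-2}\mathcal{I}(\theta_{i-1})^{-1}$. Summing the tail and taking iterated expectations yields the representations
\begin{align*}
V_N^2 = \sum_{i=N+1}^\infty i^{-2}\,\mathcal{I}(\theta_{i-1})^{-1}, \quad s_N^2 = \sum_{i=N+1}^\infty i^{-2}\,E\left\{\mathcal{I}(\theta_{i-1})^{-1}\right\}.
\end{align*}
Both sums are finite by the square-summability in (\ref{eq:alpha}) together with the $L_1$-boundedness of $\mathcal{I}(\theta_{i-1})^{-1}$ supplied by Lemma \ref{lem:L2}.

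The core tool is a Toeplitz-style observation: if a sequence $a_i \to a$, then $r_N^{-2}\sum_{i=N+1}^\infty i^{-2} a_i \to a$. This is immediate here because the weights $i^{-2}/r_N^2$ sum to one and are supported entirely on indices $i > N$; given $\varepsilon > 0$, once $N$ is large enough that $|a_i - a| < \varepsilon$ for all $i > N$, the weighted average differs from $a$ by at most $\varepsilon$. For (\ref{app:eq:VN}) I apply this pathwise: by Lemma \ref{lem:L2} we have $\theta_N \to \theta_\infty$ almost surely, and continuity of $\mathcal{I}(\cdot)^{-1}$ (Assumption \ref{as:existence}) gives $\mathcal{I}(\theta_{i-1})^{-1} \to \mathcal{I}(\theta_\infty)^{-1}$ almost surely, so $r_N^{-2}V_N^2 \to \mathcal{I}(\theta_\infty)^{-1}$ almost surely.

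For (\ref{app:eq:SN}) the same averaging argument applies to the deterministic sequence $a_i = E\left\{\mathcal{I}(\theta_{i-1})^{-1}\right\}$, provided I first establish $a_i \to E\left\{\mathcal{I}(\theta_\infty)^{-1}\right\}$. This is the step I expect to be the main obstacle, since almost-sure convergence of $\mathcal{I}(\theta_{i-1})^{-1}$ does not by itself transfer to convergence of the expectations. The remedy is to invoke the uniform integrability of $\left(\mathcal{I}(\theta_{N-1})^{-1}\right)_{N>n}$ established in Lemma \ref{lem:L2}; combined with the almost-sure convergence, Vitali's convergence theorem upgrades this to $L_1$ convergence, whence the means converge and in particular $E\left\{\mathcal{I}(\theta_\infty)^{-1}\right\} < \infty$. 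Finally, dividing (\ref{app:eq:VN}) by (\ref{app:eq:SN}) yields (\ref{app:eq:prop_ratio}), and $\eta^2 > 0$ almost surely follows because $\mathcal{I}(\theta_\infty)^{-1} > 0$ (as $\mathcal{I}(\theta) > 0$ by Assumption \ref{as:existence}) while the constant denominator $E\left\{\mathcal{I}(\theta_\infty)^{-1}\right\}$ is positive and finite.
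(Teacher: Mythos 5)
Your proposal is correct and follows essentially the same route as the paper's proof: the same closed-form representations of $V_N^2$ and $s_N^2$, the same weighted-average (Toeplitz) argument, the pathwise application via continuity of $\mathcal{I}(\cdot)^{-1}$, and the use of the uniform integrability from Lemma \ref{lem:L2} to pass the convergence through the expectation (the paper phrases this as Fatou plus uniform integrability, which is the same Vitali argument you invoke explicitly).
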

\begin{proof}
   Under the parametric martingale posterior, we have
   that
   \begin{align*}
      V_N^2 = \sum_{i = N+1}^\infty i^{-2}\mathcal{I}(\theta_{i-1})^{-1}, \quad   s_N^2 = \sum_{i = N+1}^\infty i^{-2}E\left\{\mathcal{I}(\theta_{i-1})^{-1}\right\}.
   \end{align*}
First we show that $\mathcal{I}(\theta_\infty)^{-1}$ exists and is in $L_1$. As $\theta_N \to \theta_\infty <\infty$ almost surely  and $\mathcal{I}(\theta)^{-1}$ is continuous for all $\theta \in \R$, the continuous mapping theorem gives 
\begin{align*}
    \mathcal{I}(\theta_N)^{-1} \to \mathcal{I}(\theta_\infty)^{-1}
\end{align*}
almost surely. As $\theta_\infty$ is finite almost surely, we have that $0<\mathcal{I}(\theta_\infty)^{-1} < \infty$ almost surely by Assumption \ref{as:existence}. Secondly, we have
\begin{align*}
    E\left\{\left| \mathcal{I}(\theta_\infty)^{-1}\right|\right\} &=  E\left\{\lim_{N \to \infty}\left| \mathcal{I}(\theta_N)^{-1}\right|\right\} \leq \liminf_{N\to \infty}E\left\{\left| \mathcal{I}(\theta_N)^{-1}\right|\right\}< \infty
\end{align*}
where we used Fatou's lemma for the first inequality and Lemma \ref{lem:L2} for the second, as $\left(\mathcal{I}(\theta_{N-1})^{-1}\right)_{N >n}$ being uniformly integrable implies $\sup_N E\left\{\left| \mathcal{I}(\theta_N)^{-1}\right|\right\} < \infty$.

Now suppose a convergent sequence $x_1,x_2,\ldots$ has limit $x$. For any $\varepsilon > 0$, there exists $M$ such that $|x_i - x|< \varepsilon$ for all $i \geq M$. Now choose $N > M$, and we see that
\begin{align*}
    \left|r_N^{-2}{\sum_{i = N+1}^\infty i^{-2} x_i} -  x\right| =   \left|\frac{\sum_{i = N+1}^\infty i^{-2} (x_i-x)}{\sum_{i = N+1}^\infty i^{-2}}\right| 
    \leq\frac{\sum_{i = N+1}^\infty i^{-2}   \left|x_i-x\right| }{\sum_{i = N+1}^\infty i^{-2}}\leq   \varepsilon.
\end{align*}
In the $V_N^2$ case, we have $\mathcal{I}(\theta_N)^{-1}\to \mathcal{I}(\theta_\infty)^{-1}$ almost surely, which gives
(\ref{app:eq:VN}). For the $s_N^2$ case, uniform integrability gives us $E\left\{\left|\mathcal{I}(\theta_N)^{-1}\right| \right\} \to E\left\{\mathcal{I}(\theta_\infty)^{-1}\right\}$, and we can apply the above to get (\ref{app:eq:SN}). Together, this then implies (\ref{app:eq:prop_ratio}).
 \end{proof}

\subsection{Lindeberg condition}
We next consider the Lindeberg condition, which is more involved.
\begin{lemma}\label{app:lem:lind}
    Under Assumption \ref{as:UI}, the Lindeberg condition (\ref{app:tailsum_as1_final}) holds.
\end{lemma}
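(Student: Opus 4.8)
The plan is to derive the unconditional Lindeberg condition (\ref{app:tailsum_as1_final}) directly from the uniform integrability of $(Z_N^2)_{N>n}$ supplied by Assumption \ref{as:UI}, exploiting the fact that the truncation level inside the tail sum diverges. Since the martingale increment is $X_i = i^{-1}Z_i$, the left-hand side of (\ref{app:tailsum_as1_final}) equals
\begin{align*}
s_N^{-2}\sum_{i=N+1}^\infty i^{-2}\,E\left\{Z_i^2\,\mathbbm{1}\left(Z_i^2 > i^2\varepsilon^2 s_N^2\right)\right\}.
\end{align*}
First I would pin down the rate of $s_N^2$: by Lemma \ref{app:lem:var} we have $r_N^{-2}s_N^2 \to E\{\mathcal{I}(\theta_\infty)^{-1}\}=:c$, with $c>0$ since $\mathcal{I}(\theta_\infty)^{-1}>0$ almost surely, and because $r_N^2=\sum_{i>N}i^{-2}\sim N^{-1}$ this gives $s_N^2\sim cN^{-1}$. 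The crucial consequence is that the smallest threshold occurring in the sum, $(N+1)^2\varepsilon^2 s_N^2$, satisfies $(N+1)^2 s_N^2\sim cN\to\infty$; and since $i\mapsto i^2\varepsilon^2 s_N^2$ is increasing, every threshold for $i\geq N+1$ is at least this large.

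Next, for a fixed cutoff $K>0$ I would introduce $g(K)=\sup_{j>n}E\{Z_j^2\,\mathbbm{1}(Z_j^2>K)\}$, so that uniform integrability of $(Z_N^2)_{N>n}$ is precisely the statement $g(K)\to 0$ as $K\to\infty$. For $N$ large enough that $(N+1)^2\varepsilon^2 s_N^2>K$, monotonicity of the threshold yields $\mathbbm{1}(Z_i^2>i^2\varepsilon^2 s_N^2)\leq\mathbbm{1}(Z_i^2>K)$ for every $i\geq N+1$, whence $E\{Z_i^2\,\mathbbm{1}(Z_i^2>i^2\varepsilon^2 s_N^2)\}\leq g(K)$ term by term. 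Substituting this bound and factoring $g(K)$ out of the sum gives
\begin{align*}
s_N^{-2}\sum_{i=N+1}^\infty i^{-2}\,E\left\{Z_i^2\,\mathbbm{1}\left(Z_i^2 > i^2\varepsilon^2 s_N^2\right)\right\}\;\leq\; g(K)\,s_N^{-2}r_N^2,
\end{align*}
and $s_N^{-2}r_N^2\to c^{-1}<\infty$ by the rate established above, so the prefactor is bounded for large $N$.

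Finally I would take limits in the correct order: for each fixed $K$, $\limsup_{N\to\infty}$ of the displayed quantity is at most $c^{-1}g(K)$, and then letting $K\to\infty$ annihilates $g(K)$ by uniform integrability, delivering the claimed convergence to $0$. The one point requiring care — the main obstacle — is establishing that the truncation level grows without bound uniformly along the tail, which is exactly where the interplay between the $i^{-1}$ learning rate (so that $X_i^2=i^{-2}Z_i^2$) and the normalization $s_N^2\asymp N^{-1}$ enters. This is the mechanism by which square uniform integrability of the natural gradient, rather than any pointwise bound on $Z_i$, suffices for Lindeberg's condition, and it is precisely the difficulty flagged in the main text as stemming from convergence to a random $\theta_\infty$.
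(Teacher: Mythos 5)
Your proposal is correct and follows essentially the same route as the paper's proof: rewrite the Lindeberg sum via $X_i^2 = i^{-2}Z_i^2$, use Lemma \ref{app:lem:var} to pin $s_N^2 \asymp r_N^2 \asymp N^{-1}$ so that the truncation threshold $i^2\varepsilon^2 s_N^2 \geq (N+1)^2\varepsilon^2 s_N^2 \to \infty$ uniformly along the tail, and then invoke uniform integrability of $(Z_i^2)_{i>n}$ to kill the surviving factor, with the bounded prefactor $s_N^{-2}r_N^2$ handled exactly as in the paper (your $\limsup$-then-$K\to\infty$ phrasing is just the paper's choose-$K$-for-each-$\delta$ argument in different clothing).
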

\begin{proof}
Again, let $r_N^{2} = \sum_{i = N+1}^\infty i^{-2}$.
As $\left(Z_i^2\right)_{i > n}$ is uniformly integrable, for each $\delta >0$, there exists $K\geq 0$ such that 
\begin{align}\label{app:eq_UI}
\sup_{i> n} E\left\{Z_i^2 \mathbbm{1}\left(Z_i^2 > K \right)\right\} < \delta.
\end{align}
First, we have from Lemma \ref{app:lem:var} that $r_N^{-2} s_N^2 \to D < \infty$ for some positive constant $D$, so for some  small $\kappa >0$ and sufficiently large $N$, we have
\begin{align}\label{app:eq_sn_bounds}
r_N^{-2}\,{s_N^2} \geq D - \kappa, \quad {r_N^{2}}\,{s_N^{-2}} \leq D^{-1} + \kappa. 
\end{align}
eventually. We can then write
 (\ref{app:tailsum_as1_final})  as
 \begin{align*}
&r_N^{-2}\frac{r_N^2}{s_N^2}\sum_{i = N+1}^\infty E\left[X_i^2 \mathbbm{1}\left\{X_i^2> \varepsilon^2 r_N^2\left(\frac{s_N^2}{r_N^2}\right)\right\}\right]\\
&\leq r_N^{-2}\left(D^{-1} + \kappa\right)\sum_{i = N+1}^\infty E\left[X_i^2 \mathbbm{1}\left\{X_i^2> \varepsilon^2 (D- \kappa) \sum_{j = N+1}^\infty j^{-2} \right\}\right]\\
&= r_N^{-2}\left(D^{-1} + \kappa\right)\sum_{i = N+1}^\infty i^{-2}E\left[Z_i^2 \mathbbm{1}\left\{Z_i^2>  \varepsilon^2(D- \kappa) \, i^2 r_N^2\right\}\right],
    \end{align*}
    where the first inequality uses $E\left\{|Y| \mathbbm{1}\left(|Y| > a \right)\right\} \leq E\left\{|Y| \mathbbm{1}\left(|Y| > b \right)\right\}$ if $a \geq b$.

To simplify the above, we can use the fact that  $i^2 \geq (N+1)^2$ for $i > N$ and 
\begin{align*}
  (N+1)^{-1}\leq  r_N^2 \leq N^{-1}
\end{align*}
from an integral test argument. This then gives $i^2 r_N^2 \geq N+1$, which gives
an upper bound of  (\ref{app:tailsum_as1_final}) as
\begin{align*}
 r_N^{-2}{\left(D^{-1} + \kappa\right)}\sum_{i = N+1}^\infty i^{-2} E\left[Z_i^2 \mathbbm{1}\left\{Z_i^2>  \varepsilon^2(D- \kappa) (N+1)\right\}\right].
\end{align*}

Now pick an arbitrary $\varepsilon > 0$ and $\delta > 0$.
From (\ref{app:eq_UI}), we can choose $K$ sufficiently large so that
\begin{align*}
    \sup_{i > n}E\left\{Z_i^2 \mathbbm{1}\left(Z_i^2 > \varepsilon^2 K \right)\right\} < {\delta}\cdot
\end{align*}
We can further choose $N^*$ sufficiently large so that (\ref{app:eq_sn_bounds}) holds and $\left(D- \kappa\right)\left(N+1\right) > K
$ for all $N > N^*$. This then gives us
$$
\sup_{i> n} E\left[Z_i^2 \mathbbm{1}\left\{Z_i^2>  \varepsilon^2(D- \kappa) (N+1)\right\}\right] < \delta
$$
for all $N > N^*$. We thus have 
$$
 r_N^{-2} \left(D^{-1} + \kappa\right) \sum_{i = N+1}^\infty i^{-2} E\left[Z_i^2 \mathbbm{1}\left\{Z_i^2>  \varepsilon^2(D- \kappa) (N+1)\right\}\right]< \delta \left(D^{-1} + \kappa\right)
$$
for all $N > N^*$.

In other words, for each $\delta > 0$,  once can choose $N^*$ such that for all $N >N^*$, we have
\begin{align*}
    s_N^{-2}\sum_{i = N+1}^\infty E\left\{X_i^2 \mathbbm{1}\left(X_i^2 >  \varepsilon^2 s_N^2\right)\right\} <\left(D^{-1} + \kappa\right)\delta.
\end{align*}
Since $D^{-1} + \kappa$ is just a constant, we have the desired unconditional Lindeberg condition as the above term converges to 0 with $N\to \infty$ for each $\varepsilon > 0$. 
\end{proof}

\setcounter{equation}{0}
\setcounter{theorem}{0}
\setcounter{proposition}{0}
\setcounter{lemma}{0}
\setcounter{corollary}{0}
\setcounter{assumption}{0}
\section{Proof of Theorem \ref{th:bvm}}\label{sec:app_bvm}
\subsection{Prerequisites}
We now state our specialized martingale central limit theorem for one-sided martingale arrays.
\begin{theorem}\label{app:th:array_clt_final}
Let $\left(S_{ni}, \mathcal{F}_{ni}; i \geq 1, n \geq 1\right)$ be a square-integrable infinite martingale array, with martingale differences $X_{ni}$ which satisfy (\ref{app:infin_array_as1}). Furthermore, let $V_n^2$ and $s_n^2$ be defined according to (\ref{app:eq_condit_var_n}). 
Assume further that for all $\varepsilon >0$,
 \begin{align} \label{app:infin_array_as4} 
s_{n}^{-2}\sum_{i = 1}^{\infty} E\left\{X_{ni}^2 \mathbbm{1}\left(X_{ni}^2> \varepsilon^2s_n^2 \right)\right\} \to 0,
\end{align}
and
    \begin{align} \label{app:infin_array_as5} 
      s_n^{-2} V_{n}^2 \to \eta^2
\end{align}
in probability, where $\eta^2 > 0$ is a positive constant almost surely. Under the above assumptions, we have that
\begin{align*}
    V_{n}^{-1} \sum_{i = 1}^{\infty} X_i \to \mathcal{N}(0,1).
\end{align*}
in distribution.
\end{theorem}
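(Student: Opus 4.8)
The plan is to reduce Theorem~\ref{app:th:array_clt_final} to the standard infinite-array martingale central limit theorem (Theorem~\ref{app:th:infinite_array_clt}) by rescaling each row of the array by the deterministic constant $s_n$, and then to recover the random-norming conclusion through Slutsky's theorem. The essential observation is that $s_n^2 = \sum_{i\geq 1} E(X_{ni}^2)$ is deterministic and finite for each $n$ by (\ref{app:infin_array_as1}), so dividing by $s_n$ does not disturb the martingale structure. Since $s_n^2$ typically vanishes as $n \to \infty$ in the Bernstein-von Mises setting, this normalisation is unavoidable: Theorem~\ref{app:th:infinite_array_clt} is stated in terms of $V_n^2$ itself rather than $s_n^{-2}V_n^2$, so we must first arrange the unconditional variances to equal $1$.

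First I would define the rescaled array $\widehat{S}_{ni} = s_n^{-1} S_{ni}$ with differences $\widehat{X}_{ni} = s_n^{-1} X_{ni}$ and the same filtration $(\mathcal{F}_{ni})$. Because $s_n$ is constant in $i$, the pair $(\widehat{S}_{ni}, \mathcal{F}_{ni})$ remains a square-integrable martingale array, its unconditional variances are $\widehat{s}_n^2 = 1$, and its conditional variances satisfy $\widehat{V}_n^2 = s_n^{-2} V_n^2 \to \eta^2$ in probability by (\ref{app:infin_array_as5}). This delivers the variance hypothesis of Theorem~\ref{app:th:infinite_array_clt} directly, with the positive constant $\eta^2$.

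The one step requiring care is the Lindeberg condition, since Theorem~\ref{app:th:infinite_array_clt} demands the \emph{conditional} Lindeberg condition whereas (\ref{app:infin_array_as4}) supplies only an unconditional one. For the rescaled array the unconditional hypothesis reads $\sum_{i\geq 1} E\{\widehat{X}_{ni}^2 \mathbbm{1}(\widehat{X}_{ni}^2 > \varepsilon^2)\} = s_n^{-2}\sum_{i\geq 1} E\{X_{ni}^2 \mathbbm{1}(X_{ni}^2 > \varepsilon^2 s_n^2)\} \to 0$, which is precisely (\ref{app:infin_array_as4}). I would then invoke the array analogue of Lemma~\ref{app:lem:uncondit_lind}: by Tonelli's theorem the expectation of the non-negative random variable $\sum_{i\geq 1} E\{\widehat{X}_{ni}^2 \mathbbm{1}(\widehat{X}_{ni}^2 > \varepsilon^2) \mid \mathcal{F}_{n,i-1}\}$ equals the unconditional sum above, which tends to zero; convergence in $L_1$ of a non-negative sequence forces convergence in probability, giving the conditional Lindeberg condition. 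I expect this transfer from the unconditional to the conditional Lindeberg condition to be the only genuinely delicate point, and it proceeds by the same mechanism already used in Lemma~\ref{app:lem:uncondit_lind}.

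With both hypotheses verified, Theorem~\ref{app:th:infinite_array_clt} yields $s_n^{-1}\sum_{i\geq 1} X_{ni} = \sum_{i\geq 1}\widehat{X}_{ni} \to \mathcal{N}(0,\eta^2)$ in distribution. To pass to the random norming $V_n$, note that (\ref{app:infin_array_as5}) together with continuity of $x \mapsto \sqrt{x}$ on $(0,\infty)$ (using $\eta^2 > 0$) gives $V_n/s_n \to \eta$ in probability to a constant. Since the limit is degenerate, Slutsky's theorem applies and
\begin{align*}
V_n^{-1}\sum_{i\geq 1} X_{ni} = \frac{s_n^{-1}\sum_{i\geq 1} X_{ni}}{V_n/s_n} \to \frac{\mathcal{N}(0,\eta^2)}{\eta} = \mathcal{N}(0,1)
\end{align*}
in distribution, as claimed. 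Crucially, because $\eta^2$ is a deterministic constant in this array setting, no stable-convergence argument is required, in contrast to the tail-sum case of Theorem~\ref{app:th:tailsum_CLT_final} where the random limit forced the stronger mixing conclusion.
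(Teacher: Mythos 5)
Your proposal is correct and follows essentially the same route as the paper: rescale the array by the deterministic constant $s_n$, transfer the unconditional Lindeberg condition to the conditional one via Tonelli's theorem (the array analogue of Lemma \ref{app:lem:uncondit_lind}), apply Theorem \ref{app:th:infinite_array_clt} to the rescaled array, and pass to the random norming $V_n$ by Slutsky's theorem since $\eta^2$ is a constant. Your write-up is in fact slightly more explicit than the paper's, which leaves the final Slutsky step implicit.
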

\begin{proof}
Define the one-sided infinite martingale array $\left(T_{ni}, \mathcal{G}_{ni}, i \geq 1, n \geq 1\right)$ where
         \begin{align*}
             T_{ni} = s_n^{-1}\sum_{j = 1}^i X_{nj}, \quad \mathcal{G}_{ni} = \mathcal{F}_{ni}.
         \end{align*}
         We then have
         \begin{align*}
    \sup_{ni}E\left(T_{ni}^2\right) = \sup_{n}s_n^{-2} \sup_i\sum_{j = 1}^i X_{nj}^2 \leq \sup_n s_n^{-2} \, s_n^2 = 1 < \infty.
         \end{align*}
         Furthermore, a simple adaptation of Lemma \ref{app:lem:uncondit_lind} gives us that (\ref{app:infin_array_as4}) implies
         \begin{align*}  
s_{n}^{-2}\sum_{i = 1}^{\infty} E\left\{X_{ni}^2 \mathbbm{1}\left(X_{ni}^2> \varepsilon^2s_n^2 \right)\mid \mathcal{F}_{n,i-1}\right\} \to  0
\end{align*}
in probability, for all $\varepsilon > 0$.
We can thus apply Theorem \ref{app:th:infinite_array_clt} to $\left(T_{ni}, \mathcal{G}_{ni}, i \geq 1, n \geq 1\right)$. 
\end{proof}

We now prove Theorem \ref{th:bvm}. Consider the parametric martingale posterior starting at $\theta_n$ with sample size $n$, that is for $i \geq 1$:
\begin{align*}
    \widetilde{Y}_{ni}  \sim p_{\theta_{n,i-1}}, \quad 
    \theta_{ni} = \theta_{n,i-1} + (n+i)^{-1}Z_{ni}
\end{align*}
 where
 \begin{align*}
     Z_{ni} = \mathcal{I}\left( \theta_{n,i-1}\right)^{-1} s\left(\theta_{n,i-1}, \widetilde{Y}_{ni}\right)
 \end{align*}
 and  $\theta_{n0} = \theta_n$ and $\widetilde{Y}$ indicates simulated data. Define $S_{ni} = \theta_{ni}$ and $\mathcal{F}_{ni} = \sigma\left(\widetilde{Y}_{n1},\ldots,\widetilde{Y}_{ni}\right)$.

 We will verify the conditions of Theorem \ref{app:th:array_clt_final}. For the remainder of this section, let $U$ denote a neighborhood of $\theta^*$ where $\mathcal{I}(\theta)^{-1}$ is continuously differentiable, and let $m$ choose be chosen  sufficiently large so that $\theta_m$ lies within the neighborhood $U$, which exists from  Assumption \ref{as:bvm}. Furthermore, let $r_n^{2} = \sum_{i = 1}^\infty (n+i)^{-2} = \sum_{i = n+1}^\infty i^{-2}$. As before, we can show that
 \begin{align*}
 (n+1)^{-1} \leq   r_n^2 \leq  n^{-1}
 \end{align*}
using an integral test argument.

\subsection{Uniform integrability condition}
In this section, we derive the extension of Proposition \ref{prop:Z4} to the Bernstein-von Mises case, given below.
\begin{proposition}\label{app:prop:Z4_bvm}
      Let $Z(\theta,Y) = \mathcal{I}(\theta)^{-1}s(\theta, Y)$, where $Y \sim p_\theta$. Suppose there exists non-negative constants $B,C< \infty$ such that the following holds for all $\theta \in \Theta \subseteq \R$,
    \begin{align*}
    E_\theta\left\{Z(\theta,Y)^4\right\} \leq B + C \theta^4.
    \end{align*}
Furthermore, let $(\theta_n)_{n \geq 1}$ be a deterministic convergent sequence. Then $\left(Z^2_{ni}\right)_{i\geq 1, n \geq 1}$ is uniformly integrable. 
\end{proposition}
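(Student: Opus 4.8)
The plan is to mirror the single-index argument used to prove Proposition \ref{prop:Z4}, carrying out the same fourth-moment recursion row by row while tracking carefully that every constant produced is independent of the array index $n$. First I would fix $n$ and observe that since $\widetilde{Y}_{ni} \sim p_{\theta_{n,i-1}}$, the hypothesis applied at $\theta = \theta_{n,i-1}$ gives $E(Z_{ni}^4 \mid \mathcal{F}_{n,i-1}) \leq B + C\theta_{n,i-1}^4$, and the H\"{o}lder bounds on $E(|Z_{ni}|^3 \mid \mathcal{F}_{n,i-1})$ and $E(Z_{ni}^2 \mid \mathcal{F}_{n,i-1})$ are identical to those in the proof of Proposition \ref{prop:Z4}. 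Expanding $E(\theta_{ni}^4 \mid \mathcal{F}_{n,i-1})$ via the recursion $\theta_{ni} = \theta_{n,i-1} + (n+i)^{-1}Z_{ni}$ and using the fact that $E(Z_{ni} \mid \mathcal{F}_{n,i-1}) = 0$ yields the one-step inequality $E(\theta_{ni}^4 \mid \mathcal{F}_{n,i-1}) \leq \theta_{n,i-1}^4\{1 + D_1(n+i)^{-2}\} + D_2(n+i)^{-2}$, where crucially $D_1,D_2$ depend only on $B,C$ and not on $n$ or $i$.

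Next I would iterate this inequality and apply iterated expectations to obtain
\[
E(\theta_{ni}^4) \leq \theta_n^4 \prod_{j=1}^i\left(1 + D_1(n+j)^{-2}\right) + D_2\sum_{j=1}^i (n+j)^{-2}\prod_{k=j+1}^i \left(1 + D_1(n+k)^{-2}\right).
\]
The product is controlled exactly as before through $\prod_{j=1}^i\left(1 + D_1(n+j)^{-2}\right) \leq \exp\left\{D_1 \sum_{j=1}^\infty (n+j)^{-2}\right\}$, and here is the one genuinely new point: since $\sum_{j=1}^\infty (n+j)^{-2} \leq \sum_{m=1}^\infty m^{-2} < \infty$ for every $n$, this bound is a single finite constant \emph{uniform} in $n$. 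The same domination bounds the sum term uniformly in both $n$ and $i$. Combining these with the fact that a convergent sequence $(\theta_n)_{n \geq 1}$ is bounded, so $\sup_n \theta_n^4 < \infty$, delivers $\sup_{n \geq 1,\, i \geq 1} E(\theta_{ni}^4) < \infty$.

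Finally, applying iterated expectations to the hypothesis once more gives $\sup_{n,i} E(Z_{ni}^4) \leq B + C\sup_{n,i} E(\theta_{n,i-1}^4) < \infty$, and an array with $\sup_{n,i} E\{(Z_{ni}^2)^2\} < \infty$ is uniformly integrable by the de la Vall\'{e}e-Poussin criterion with $\phi(t) = t^2$, which completes the argument. The main obstacle—indeed the only substantive difference from Proposition \ref{prop:Z4}—is ensuring that all bounds are uniform across rows rather than merely finite for each fixed $n$. This is resolved by the twin observations that the recursion constants $D_1,D_2$ are universal and that the tail series $\sum_{j}(n+j)^{-2}$ is dominated by $\sum_{m} m^{-2}$ regardless of where each row begins, so a single envelope suffices for the entire array.
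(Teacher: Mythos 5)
Your proof is correct and follows essentially the same route as the paper's: run the fourth-moment recursion of Proposition \ref{prop:Z4} row by row with constants $D_1,D_2$ chosen independently of $n$, dominate the tail sum $\sum_{j}(n+j)^{-2}$ uniformly in $n$, invoke boundedness of the convergent sequence $(\theta_n)$ to get $\sup_{n,i}E(Z_{ni}^4)<\infty$, and conclude uniform integrability of $(Z_{ni}^2)$ from the uniform fourth-moment bound. The only difference is that you spell out the iteration explicitly where the paper simply cites the earlier proof.
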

\begin{proof}
The proof of Proposition \ref{prop:Z4} can be easily applied to get 
\begin{align*}
    \sup_{i \geq 1} E\left(Z^4_{ni}\right) \leq C \sup_{i \geq 1}E\left(\theta_{ni}^4\right) + B &\leq D_1\theta_n^4  + D_2 \sum_{i = n+1}^\infty i^{-2}\\
    &\leq D_1\theta_n^4  + D_2n^{-1} 
\end{align*}
for all $n\geq 1$, where $D_1,D_2< \infty$ are non-negative finite constants which can be chosen independently of $n$. This gives
\begin{align*}
   \sup_{n \geq 1} \sup_{i \geq 1} E\left(Z^4_{ni}\right) \leq    D_1\sup_{n \geq 1}  \theta_n^4  + D_2n^{-1} < \infty
\end{align*}
where finiteness follows as $\theta_n$ is convergent. We thus have that 
$\left(Z^2_{ni}\right)_{i\geq 1, n \geq 1}$ is uniformly integrable.
\end{proof}

\subsection{Variance condition}
We now check the variance condition (\ref{app:infin_array_as5}). First, we show a uniform version of Lemma \ref{lem:L2}.
\begin{lemma}\label{app:lem:sn_UI}
Let $(\theta_n)_{n \geq 1}$ be a deterministic convergent sequence. Under Assumption \ref{as:bvm}, we have that
the collection $\left(\mathcal{I}(\theta_{ni})^{-1}:i \geq 1, n \geq 1\right)$ is uniformly integrable. Furthermore, we have $s_n^2 \to 0$.  
\end{lemma}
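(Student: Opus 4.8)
The plan is to prove the two claims in sequence, exploiting the identity that $\mathcal{I}(\theta_{n,i-1})^{-1}$ is exactly the conditional second moment of the natural gradient. First I would record the one-step identity $E\left(Z_{ni}^2 \mid \mathcal{F}_{n,i-1}\right) = \mathcal{I}(\theta_{n,i-1})^{-1}$, which holds by Assumption \ref{as:existence} in the same way as in the proof of Lemma \ref{lem:L2}. Since $(\theta_n)_{n \geq 1}$ is a deterministic convergent sequence and Assumption \ref{as:bvm} incorporates the hypotheses of Proposition \ref{prop:Z4}, Proposition \ref{app:prop:Z4_bvm} yields that the array $\left(Z_{ni}^2\right)_{i \geq 1, n \geq 1}$ is uniformly integrable, which is the workhorse for both claims.

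For the uniform integrability of $\left(\mathcal{I}(\theta_{ni})^{-1}\right)$, I would invoke the standard fact that the conditional expectations of a uniformly integrable family form a uniformly integrable family. Concretely, writing $Y_{ni} = \mathcal{I}(\theta_{n,i-1})^{-1} = E\left(Z_{ni}^2 \mid \mathcal{F}_{n,i-1}\right) \geq 0$, the event $\{Y_{ni} > K\}$ is $\mathcal{F}_{n,i-1}$-measurable, so $E\left\{Y_{ni}\,\mathbbm{1}(Y_{ni} > K)\right\} = E\left\{Z_{ni}^2\,\mathbbm{1}(Y_{ni} > K)\right\}$; combining the $L_1$ bound $P(Y_{ni} > K) \leq K^{-1}\sup_{n,i} E(Z_{ni}^2)$ with the uniform integrability of $\left(Z_{ni}^2\right)$ forces this tail to vanish uniformly as $K \to \infty$. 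The collection $\left(Y_{ni}: i \geq 1, n \geq 1\right)$ equals $\left\{\mathcal{I}(\theta_{nj})^{-1}: j \geq 0,\, n \geq 1\right\}$, of which the target family $\left\{\mathcal{I}(\theta_{nj})^{-1}: j \geq 1,\, n \geq 1\right\}$ is a subcollection; since a subcollection of a uniformly integrable family is uniformly integrable, the first claim follows.

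Finally, for $s_n^2 \to 0$ I would use that uniform integrability implies $L_1$-boundedness, so there is a finite constant $K$ with $\sup_{n,i} E\left\{\mathcal{I}(\theta_{ni})^{-1}\right\} \leq K$. Taking expectations in the one-step identity gives $E\left(X_{ni}^2\right) = (n+i)^{-2} E\left\{\mathcal{I}(\theta_{n,i-1})^{-1}\right\}$, whence
\[
s_n^2 = \sum_{i=1}^\infty (n+i)^{-2}\, E\left\{\mathcal{I}(\theta_{n,i-1})^{-1}\right\} \leq K \sum_{i=1}^\infty (n+i)^{-2} = K\, r_n^2 \leq K\, n^{-1} \to 0,
\]
using the integral-test bound $r_n^2 \leq n^{-1}$ established earlier in this section.

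I expect the only genuinely delicate point to be the array-level passage from uniform integrability of $\left(Z_{ni}^2\right)$ to that of its conditional expectations $\left(\mathcal{I}(\theta_{ni})^{-1}\right)$; once this is in place, the decay of $s_n^2$ is an immediate consequence of the resulting uniform $L_1$ bound together with the summability in (\ref{eq:alpha}). The remaining manipulations are routine and parallel the corresponding steps in the proof of Lemma \ref{lem:L2}.
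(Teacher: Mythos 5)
Your proposal is correct and follows essentially the same route as the paper's proof: both rest on the identity $E\left(Z_{ni}^2 \mid \mathcal{F}_{n,i-1}\right) = \mathcal{I}(\theta_{n,i-1})^{-1}$, the uniform integrability of the array $\left(Z_{ni}^2\right)$ from Proposition \ref{app:prop:Z4_bvm}, the standard fact that conditional expectations of a uniformly integrable family are uniformly integrable (which the paper simply cites, and you spell out), and the bound $s_n^2 \leq K r_n^2 \leq K n^{-1}$. The only cosmetic difference is that you bound $s_n^2$ via the $L_1$ bound on the conditional expectations rather than directly via $\sup_{n,i} E\left(Z_{ni}^2\right)$, which is the same quantity by the tower property.
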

\begin{proof}
    We begin with
    \begin{align*}
        E\left(Z^2_{ni}\mid \mathcal{F}_{n,i-1}\right) = \mathcal{I}(\theta_{n,i-1})^{-1}.
    \end{align*}
    Since $\left(Z_{ni}^2: i \geq 1, n \geq 1\right)$ is uniformly integrable, there exists $L>0$ such that
    \begin{align*}
        \sup_{i\geq 1, n \geq 1}E\left(Z_{ni}^2 \right) \leq L.
    \end{align*}
    This then gives
    \begin{align*}
       s_n^2 &= \sum_{i = 1}^\infty (n+i)^{-2}E\left(Z_{ni}^2\right)\leq L r_n^2 \leq  Ln^{-1}.
    \end{align*}
    We thus have $s_n^2 \to 0$. For the uniform integrability result, it is an extension of the usual argument on conditional expectations (e.g. \citet[Exercise 13.3]{Williams1991}). 
    Since $\mathcal{F}_{ni}$ is a sub-$\sigma$-algebra of $\mathcal{F}$, and the set $\left(Z_{ni}^2:i \geq 1, n \geq 1\right)$ is uniformly integrable, the collection $\left(E\left(Z_{ni}^2\mid \mathcal{F}_{n,i-1}\right): i \geq 1, n \geq 1\right)$ is uniformly integrable.

\end{proof}

We will also need the following useful lemma, which is related to posterior consistency.
\begin{lemma}\label{app:lem:L2}
      Let $(\theta_n)_{n \geq 1}$ be a deterministic  sequence which converges to $\theta^*$. Under Assumption \ref{as:bvm}, we have that 
             \begin{align*}
           E\left\{(\theta_{n\infty} - \theta^*)^2 \right\} \to 0.
       \end{align*}
       Furthermore, we have
       \begin{align*}
           \sup_{i \geq 1}E\left\{(\theta_{ni} - \theta^*)^2\right\} \to 0.
       \end{align*}
\end{lemma}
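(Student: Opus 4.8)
The plan is to reduce both claims to a Pythagorean (bias--variance) decomposition of the $L_2$ error, exactly as in the posterior consistency argument, and then invoke Lemma~\ref{app:lem:sn_UI}. Fix $n$. The row $(\theta_{ni})_{i\ge 1}$ is a martingale bounded in $L_2$ started at the deterministic value $\theta_{n0}=\theta_n$, so $\theta_{ni}\to\theta_{n\infty}$ almost surely and in $L_2$, and in particular $E(\theta_{n\infty})=\theta_n$. Writing $\theta_{n\infty}-\theta^*=(\theta_{n\infty}-\theta_n)+(\theta_n-\theta^*)$ and using that $\theta_n-\theta^*$ is deterministic while $E(\theta_{n\infty}-\theta_n)=0$, the cross term drops and
$$E\left\{(\theta_{n\infty}-\theta^*)^2\right\}=E\left\{(\theta_{n\infty}-\theta_n)^2\right\}+(\theta_n-\theta^*)^2.$$

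Next I would identify the posterior-variance term with $s_n^2$. Since the martingale differences $X_{ni}=(n+i)^{-1}Z_{ni}$ are orthogonal in $L_2$, the $L_2$ convergence yields $E\{(\theta_{n\infty}-\theta_n)^2\}=\sum_{i\ge 1}E(X_{ni}^2)=s_n^2$. Lemma~\ref{app:lem:sn_UI} already establishes $s_n^2\to 0$, and $\theta_n\to\theta^*$ gives $(\theta_n-\theta^*)^2\to 0$, which proves the first claim.

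For the uniform-in-$i$ statement I would apply the identical decomposition to $\theta_{ni}$ rather than $\theta_{n\infty}$: because $E(\theta_{ni})=\theta_n$ for every $i$, the cross term again vanishes, giving $E\{(\theta_{ni}-\theta^*)^2\}=E\{(\theta_{ni}-\theta_n)^2\}+(\theta_n-\theta^*)^2$. Orthogonality of the increments then gives $E\{(\theta_{ni}-\theta_n)^2\}=\sum_{j=1}^{i}E(X_{nj}^2)\le s_n^2$, a bound independent of $i$, so
$$\sup_{i\ge 1}E\left\{(\theta_{ni}-\theta^*)^2\right\}\le s_n^2+(\theta_n-\theta^*)^2\to 0.$$

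The argument is essentially bookkeeping once Lemma~\ref{app:lem:sn_UI} is in hand, so there is no serious obstacle; the only point needing care is the $L_2$ (not merely almost sure) convergence $\theta_{ni}\to\theta_{n\infty}$, which is what licenses both the vanishing cross term (through $E(\theta_{n\infty})=\theta_n$) and the identity $E\{(\theta_{n\infty}-\theta_n)^2\}=s_n^2$. This $L_2$ convergence is guaranteed by the $L_2$-boundedness of each row noted after (\ref{app:infin_array_as1}), which in turn rests on the uniform integrability supplied by Proposition~\ref{app:prop:Z4_bvm}.
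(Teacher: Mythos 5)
Your proof is correct and follows essentially the same route as the paper: the bias--variance decomposition with vanishing cross term via $E(\theta_{n\infty})=\theta_n$, the identification $E\{(\theta_{n\infty}-\theta_n)^2\}=s_n^2$, and an appeal to Lemma \ref{app:lem:sn_UI}. The only cosmetic difference is in the uniform-in-$i$ claim, where the paper bounds $E\{(\theta_{ni}-\theta^*)^2\}\leq E\{(\theta_{n\infty}-\theta^*)^2\}$ directly (the submartingale property of the squared deviation), while you redo the decomposition at stage $i$ and bound the partial variance sum by $s_n^2$; both are valid and equivalent in substance.
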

\begin{proof}
   For each $n$, we have that $\theta_{n\infty}$ exists almost surely as the sequence $\left(\theta_{ni}\right)_{i \geq 1}$ is a martingale bounded in $L_2$.
We can expand the expectation as
\begin{align*}
 E\left\{(\theta_{n\infty} - \theta^*)^2\right\} = E\left\{(\theta_{n\infty} - \theta_{n0})^2\right\} + (\theta_{n0} - \theta^*)^2 + 2 E\left(\theta_{n\infty} - \theta_{n0}\right)\left(\theta_{n0} - \theta^*\right)
\end{align*}
As each martingale is bounded in $L_2$ (and thus uniformly integrable), we have $E\left(\theta_{n\infty} \right)= \theta_{n0}$ so the last term is 0. We can also highlight that
\begin{align*}
    E\left\{(\theta_{n\infty} - \theta_{n0})^2\right\} = s_n^2.
\end{align*}
We thus have $E\left\{(\theta_{n\infty} - \theta^*)^2\right\} \to 0$ 
 as $s_n^2 \to 0$ by Lemma \ref{app:lem:sn_UI} and $\theta_{n0} \to \theta^*$ by assumption.

 Furthermore, we have 
 \begin{align*}
     E\left\{(\theta_{ni}- \theta^*)^2\right\}  \leq      E\left\{(\theta_{n\infty}- \theta^*)^2\right\}, 
 \end{align*}
 so again we have $\sup_{i \geq 1}E\left\{(\theta_{ni}- \theta^*)^2\right\} \to 0$.
\end{proof}

We can now show the variance condition (\ref{app:infin_array_as5}).
\begin{lemma}\label{app:lem:var_bvm}
    Let $(\theta_n)_{n \geq 1}$ be a deterministic  sequence which converges to $\theta^*$. Under Assumption \ref{as:bvm}, we have that
      \begin{align*}
    {r_n^{-2}V_n^2} \to  \mathcal{I}(\theta^*)^{-1}
    \end{align*}
    in probability, and  
    \begin{align*}
        \frac{V_n^2}{s_n^2} \to 1
    \end{align*}
    in probability.
  \end{lemma}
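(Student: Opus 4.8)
The plan is to first make the conditional variances explicit. Since $X_{ni} = (n+i)^{-1} Z_{ni}$ and $E(Z_{ni}^2 \mid \mathcal{F}_{n,i-1}) = \mathcal{I}(\theta_{n,i-1})^{-1}$, we have $E(X_{ni}^2 \mid \mathcal{F}_{n,i-1}) = (n+i)^{-2}\mathcal{I}(\theta_{n,i-1})^{-1}$, so that
\begin{align*}
V_n^2 = \sum_{i=1}^\infty (n+i)^{-2}\,\mathcal{I}(\theta_{n,i-1})^{-1}, \qquad s_n^2 = \sum_{i=1}^\infty (n+i)^{-2}\,E\left\{\mathcal{I}(\theta_{n,i-1})^{-1}\right\}.
\end{align*}
Because the weights $(n+i)^{-2}/r_n^2$ are nonnegative and sum to one, $r_n^{-2}V_n^2 - \mathcal{I}(\theta^*)^{-1}$ is exactly the weighted average of the increments $\mathcal{I}(\theta_{n,i-1})^{-1} - \mathcal{I}(\theta^*)^{-1}$. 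I would therefore aim to prove the stronger statement of convergence in $L_1$, which implies convergence in probability, via
\begin{align*}
E\left|r_n^{-2}V_n^2 - \mathcal{I}(\theta^*)^{-1}\right| \leq \sup_{i \geq 1} E\left|\mathcal{I}(\theta_{n,i-1})^{-1} - \mathcal{I}(\theta^*)^{-1}\right|,
\end{align*}
so the whole problem reduces to showing the right-hand side vanishes as $n \to \infty$.

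For the core estimate I would fix a compact neighbourhood $U'$ of $\theta^*$ contained in the set $U$ of Assumption \ref{as:bvm}, on which $\mathcal{I}(\cdot)^{-1}$ is Lipschitz with some constant $L_0$ by continuous differentiability. Splitting on whether $\theta_{n,i-1}$ lies in $U'$, the Lipschitz part is bounded by $L_0\, E|\theta_{n,i-1} - \theta^*| \leq L_0\left[E\left\{(\theta_{n,i-1} - \theta^*)^2\right\}\right]^{1/2}$, which tends to zero uniformly in $i$ by Lemma \ref{app:lem:L2}. For the part on $\{\theta_{n,i-1} \notin U'\}$, I would use Chebyshev together with Lemma \ref{app:lem:L2} to show $P(\theta_{n,i-1} \notin U') \to 0$ uniformly in $i$, and then invoke the uniform integrability of the array $\left(\mathcal{I}(\theta_{n,i-1})^{-1}\right)$ from Lemma \ref{app:lem:sn_UI} to conclude that $E\left\{\mathcal{I}(\theta_{n,i-1})^{-1}\,\mathbbm{1}(\theta_{n,i-1}\notin U')\right\}$ is uniformly small; the contribution of the constant $\mathcal{I}(\theta^*)^{-1}$ times this vanishing probability is handled trivially. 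Combining both pieces yields $\sup_{i \geq 1} E\left|\mathcal{I}(\theta_{n,i-1})^{-1} - \mathcal{I}(\theta^*)^{-1}\right| \to 0$, which proves the first claim.

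For the ratio, note that $s_n^2$ is deterministic and the identical weighted-average argument, now with the expectation already inside, gives $r_n^{-2}s_n^2 \to \mathcal{I}(\theta^*)^{-1}$, a strictly positive constant by Assumption \ref{as:existence}. Writing $V_n^2/s_n^2 = (r_n^{-2}V_n^2)/(r_n^{-2}s_n^2)$ and applying Slutsky's theorem then delivers $V_n^2/s_n^2 \to 1$ in probability. The main obstacle is the uniform-in-$i$ control in the second paragraph: the martingale increments are individually small, but the estimates $\theta_{n,i-1}$ can in principle wander far from $\theta^*$, so the event $\{\theta_{n,i-1}\notin U'\}$ must be tamed simultaneously for all $i$. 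This is precisely why the uniform $L_2$-consistency of Lemma \ref{app:lem:L2} and the array uniform integrability of Lemma \ref{app:lem:sn_UI} are both required, rather than a naive pointwise continuity argument applied term by term.
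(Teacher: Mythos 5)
Your proposal is correct and follows essentially the same route as the paper: reduce both $r_n^{-2}V_n^2$ and $r_n^{-2}s_n^2$ to the single quantity $\sup_{i\geq 1}E\left|\mathcal{I}(\theta_{n,i-1})^{-1}-\mathcal{I}(\theta^*)^{-1}\right|$, split on a compact neighbourhood of $\theta^*$ where $\mathcal{I}(\cdot)^{-1}$ is Lipschitz, control the inside term by the uniform $L_2$-consistency of Lemma \ref{app:lem:L2} and the outside term by Markov's inequality together with the array uniform integrability from Lemma \ref{app:lem:sn_UI}, then pass to the ratio. The only cosmetic difference is that you phrase the $V_n^2$ statement as $L_1$ convergence while the paper deduces convergence in probability from the same $L_1$ bound via Markov's inequality, and you bound the tail piece by treating $\mathcal{I}(\theta_{n,i-1})^{-1}$ and the constant $\mathcal{I}(\theta^*)^{-1}$ separately rather than applying uniform integrability to their difference; these are equivalent.
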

\begin{proof}
      As before, we have that
     \begin{align*}
      V_n^2 = \sum_{i = 1}^\infty (n+i)^{-2}\, \mathcal{I}(\theta_{n,i-1})^{-1}, \quad   s_n^2 = \sum_{i = 1}^\infty (n+i)^{-2}E\left\{\mathcal{I}(\theta_{n,i-1})^{-1}\right\}.
   \end{align*}
However, this case is more complex than Lemma \ref{app:lem:var}. To see this, although we have $\lim_{n \to \infty}\mathcal{I}(\theta_{n0})^{-1} =\mathcal{I}(\theta^*)^{-1}$, we require this convergence to be uniform over all values of $i$.
   
We start by showing the following:
   \begin{align}\label{app:eq:sn_bvm}
       r_n^{-2}{s_n^2}\to \mathcal{I}(\theta^*)^{-1}.
   \end{align}
We can write
   \begin{align*}
       \left| r_n^{-2}{s_n^2}- \mathcal{I}(\theta^*)^{-1}\right| \leq r_n^{-2}{\sum_{i = 1}^\infty (n+i)^{-2} E\left\{\left|\mathcal{I}(\theta_{n,i-1})^{-1} - \mathcal{I}(\theta^*)^{-1}\right|\right\}}.
   \end{align*}
We now upper bound the expectation term in the above summation. 
For some $r \in \mathbb{R}^+$, consider a compact interval $ I = [\theta^* - r, \theta^*+r]$ where $I \subset U$. 
Now consider the term
\begin{align*}
    \sup_{i \geq 1} E\left\{\left| \mathcal{I}(\theta_{ni})^{-1} - \mathcal{I}(\theta^*)^{-1}\right|\right\} &\leq  \sup_{i \geq 1} E\left\{\left| \mathcal{I}(\theta_{ni})^{-1} - \mathcal{I}(\theta^*)^{-1}\right|\mathbbm{1}\left(\theta_{ni} \in I\right)\right\}\\
    &+ \sup_{i \geq 1} E\left\{\left| \mathcal{I}(\theta_{ni})^{-1} - \mathcal{I}(\theta^*)^{-1}\right|\mathbbm{1}\left(\theta_{ni} \in I^C\right)\right\},
\end{align*}
where we have split the expectation into $\theta_{ni}$ lying inside or outside $I$. The proof technique will involve bounding the first term using the continuous differentiability of $\mathcal{I}(\theta)^{-1}$, and bounding the second term using the uniform integrability of $\left(\mathcal{I}(\theta_{ni})^{-1}: i \geq 1, n \geq 1\right)$.

For the first term on the right of the inequality, 
within this compact interval, $\mathcal{I}(\theta)^{-1}$ is continuously differentiable from Assumption \ref{as:bvm} and is thus $K$-Lipschitz  for some positive constant $K$. 
We thus have
\begin{align*}
    \sup_{i \geq 1} E\left\{\left| \mathcal{I}(\theta_{ni})^{-1} - \mathcal{I}(\theta^*)^{-1}\right|\mathbbm{1}\left(\theta_{ni} \in I\right)\right\} \leq  K \sup_{i \geq 1} E\left(\left| \theta_{ni} - \theta^*\right|\right)\to 0
\end{align*}
as we have $\sup_{i \geq 1}E\left\{(\theta_{ni} -\theta^*)^2\right\} \to 0$ from Lemma \ref{app:lem:L2}.

For the second term, we will use uniform integrability of $\left(|\mathcal{I}(\theta_{ni})^{-1}- \mathcal{I}(\theta^*)^{-1}|: i \geq 1, n \geq 1\right)$, which follows from Lemma \ref{app:lem:sn_UI} and the triangle inequality.  
For each $\varepsilon > 0$, there exists $\delta > 0$ such that 
\begin{align*}
    E\left\{\left|\mathcal{I}(\theta_{ni})^{-1} - \mathcal{I}(\theta^*)^{-1} \right| \mathbbm{1}(\theta_{ni} \in A)\right\} \leq \varepsilon
\end{align*}
for all $n \geq 1, i \geq 1$ whenever $P(\theta_{ni}\in A) \leq \delta$. We are thus interested in applying this  with $A = I^C$.
 For all $n \geq 1$ and $i\geq 1$, we have that the above is upper bounded by $\varepsilon$ as long as $P\left(\theta_{ni} \in I^C\right)\leq \delta$. 
Now we use Markov's inequality to upper bound $P\left(\theta_{ni}\in I^C\right)$:
\begin{align*}
    P\left(\theta_{ni} \in I^C\right) &= P(\left| \theta_{ni} - \theta^*\right|\geq r)\\
    &\leq r^{-2}E\left\{(\theta_{ni}-\theta^*)^2\right\}.
\end{align*}
From Lemma \ref{app:lem:L2}, we have that
\begin{align*}
    \sup_{i \geq 1}P\left(\theta_{ni}\in A^C\right) \leq r^{-2}\sup_{i \geq 1} E\left\{(\theta_{ni}- \theta^*)^2\right\}\to 0.
\end{align*}
We can thus choose $n^*\geq 1$ sufficiently large such that
\begin{align*}
    \sup_{i \geq 1} P\left(\theta_{ni} \in I^C\right) \leq \delta
\end{align*}
for all $n > n^*$.
In other words, we have $P\left(\theta_{ni} \in I^C\right)  \leq \delta$ for all $n \geq n^*$ and all $i \geq 1$, so 
\begin{align*}
        E\left\{\left|\mathcal{I}(\theta_{ni})^{-1} - \mathcal{I}(\theta^*)^{-1} \right| \mathbbm{1}\left(\theta_{ni} \in I^C\right)\right\} \leq \varepsilon.
\end{align*}
for all $i \geq 1$ and $n \geq n^*$. Since $\varepsilon >0$ was arbitrary, this gives 
\begin{align*}
     \sup_{i \geq 1} E\left\{\left|\mathcal{I}(\theta_{ni})^{-1} - \mathcal{I}(\theta^*)^{-1} \right| \mathbbm{1}\left(\theta_{ni} \in I^C\right)\right\} \to 0.
\end{align*}
Putting this together yields 
$\sup_{i \geq 1} E\left\{\mathcal{I}(\theta_{ni})^{-1} - \mathcal{I}(\theta^*)\right\} \to 0$.
This thus gives
\begin{align*}
    r_n^{-2}{\sum_{i = 1}^\infty (n+i)^{-2} E\left(\left|\mathcal{I}(\theta_{n,i-1})^{-1} - \mathcal{I}(\theta^*)^{-1}\right|\right)} &\leq \sup_{i \geq 1} E\left\{\left|\mathcal{I}(\theta_{ni})^{-1} - \mathcal{I}(\theta^*)^{-1}\right|\right\}\to 0,
\end{align*}
which gives  $r_n^{-2} s_n^2 \to \mathcal{I}(\theta^*)^{-1}$.

We now show that the above implies $r_n^{-2}V_n^2 \to \mathcal{I}(\theta^*)^{-1}$ in probability. For any $\varepsilon > 0$, Markov's inequality gives
\begin{align*}
P\left(\left|r_n^{-2}V_n^2 - \mathcal{I}(\theta^*)^{-1} \right| > \varepsilon\right) &=     P\left(\left|r_n^{-2}{\sum_{i = 1}^\infty (n+i)^{-2} \mathcal{I}(\theta_{n,i-1})^{-1}}- \mathcal{I}(\theta^*)^{-1}\right| > \varepsilon\right) \\
&\leq \frac{E\left\{\left|r_n^{-2}{\sum_{i = 1}^\infty (n+i)^{-2} \mathcal{I}(\theta_{n,i-1})^{-1}}- \mathcal{I}(\theta^*)^{-1}\right| \right\}}{\varepsilon}\\
&\leq \frac{r_n^{-2}\sum_{i = 1}^\infty (n+i)^{-2}  E\left\{\left|\mathcal{I}(\theta_{n,i-1})^{-1}- \mathcal{I}(\theta^*)^{-1}\right| \right\}}{\varepsilon} \to 0
\end{align*}
where we have applied Tonelli's theorem and the earlier proof. Finally, the continuous mapping theorem gives us $V_n^2/ s_n^2 \to 1$ in probability.

\end{proof}
\subsection{Lindeberg condition}
We now show the Lindeberg condition (\ref{app:infin_array_as4}).
\begin{lemma}
    Let $\theta_n$ be a deterministic sequence which converges to $\theta^*$ as $n \to \infty$. Under Assumption \ref{as:bvm}, the Lindeberg condition (\ref{app:infin_array_as4}) holds.
\end{lemma}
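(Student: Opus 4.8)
The plan is to mirror the argument of Lemma \ref{app:lem:lind} from the predictive case almost verbatim, the only substantive change being that the single-sequence uniform integrability of $(Z_N^2)_{N>n}$ is replaced by the \emph{array} uniform integrability of $\left(Z_{ni}^2\right)_{i\geq 1,\, n\geq 1}$ supplied by Proposition \ref{app:prop:Z4_bvm}. Since $X_{ni} = (n+i)^{-1}Z_{ni}$, the martingale differences obey $X_{ni}^2 = (n+i)^{-2}Z_{ni}^2$, so the task is to control the tail-truncated second moments of the $Z_{ni}$. First I would invoke Lemma \ref{app:lem:var_bvm} to fix $D := \mathcal{I}(\theta^*)^{-1} > 0$ and write $r_n^{-2}s_n^2 \to D$; then for a small $\kappa > 0$ and all sufficiently large $n$ we have the bounds $s_n^2 \geq (D-\kappa)r_n^2$ and $s_n^{-2} \leq (D-\kappa)^{-1}r_n^{-2}$.

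The crucial step translates the Lindeberg indicator into a threshold on $Z_{ni}^2$ that diverges with $n$ uniformly in $i$. On the event $\{X_{ni}^2 > \varepsilon^2 s_n^2\}$ we have $Z_{ni}^2 > \varepsilon^2 s_n^2 (n+i)^2$, and combining $s_n^2 \geq (D-\kappa)r_n^2$, the integral-test bound $r_n^2 \geq (n+1)^{-1}$, and $(n+i)^2 \geq (n+1)^2$ for every $i\geq 1$ yields $\varepsilon^2 s_n^2 (n+i)^2 \geq \varepsilon^2(D-\kappa)(n+1)$. Using monotonicity of the truncated expectation in the threshold, this gives, for all large $n$,
\[
s_n^{-2}\sum_{i=1}^\infty E\left\{X_{ni}^2\,\mathbbm{1}\left(X_{ni}^2 > \varepsilon^2 s_n^2\right)\right\} \leq (D-\kappa)^{-1}\, r_n^{-2}\sum_{i=1}^\infty (n+i)^{-2}\, E\left\{Z_{ni}^2\,\mathbbm{1}\left(Z_{ni}^2 > \varepsilon^2(D-\kappa)(n+1)\right)\right\}.
\]

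To finish, I would fix $\varepsilon, \delta > 0$ and use array uniform integrability to choose $K$ with $\sup_{i\geq 1,\, n\geq 1} E\left\{Z_{ni}^2\,\mathbbm{1}\left(Z_{ni}^2 > K\right)\right\} < \delta$; then for $n$ large enough that $\varepsilon^2(D-\kappa)(n+1) > K$, every summand's truncated expectation is below $\delta$ simultaneously. Pulling $\delta$ out and using $r_n^{-2}\sum_{i=1}^\infty (n+i)^{-2} = 1$ bounds the right-hand side by $(D-\kappa)^{-1}\delta$, and since $\delta$ was arbitrary the Lindeberg sum tends to $0$.

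The one genuinely new ingredient relative to the predictive case, and the step I expect to be the main obstacle, is that both the divergence of the effective threshold and the uniform integrability must hold uniformly over the \emph{extra} index $n$, not just over $i$. The threshold divergence is automatic because $(n+i)^2 \geq (n+1)^2$ makes $i=1$ the worst case, so the lower bound $\varepsilon^2(D-\kappa)(n+1)$ is uniform in $i$; the uniform-in-$n$ integrability is exactly the content of Proposition \ref{app:prop:Z4_bvm}, which itself rests on the convergence, hence boundedness, of the deterministic sequence $(\theta_n)_{n\geq 1}$. The only care needed is that the variance bounds and the threshold inequality activate only for large $n$, but that suffices since the claim is a statement about the limit $n\to\infty$.
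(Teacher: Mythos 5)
Your proposal is correct and follows essentially the same route as the paper's proof: reduce the Lindeberg sum to truncated second moments of $Z_{ni}$ via $X_{ni}^2=(n+i)^{-2}Z_{ni}^2$, use $r_n^{-2}s_n^2\to\mathcal{I}(\theta^*)^{-1}$ together with $(n+i)^2 r_n^2\geq n+1$ to make the truncation threshold diverge uniformly in $i$, and then invoke the array uniform integrability from Proposition \ref{app:prop:Z4_bvm}. If anything, your explicit $\kappa$-bounds make rigorous the "$\approx$" steps that the paper only sketches by reference to Lemma \ref{app:lem:lind}.
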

\begin{proof}
The proof follows in the same way as Lemma \ref{app:lem:lind}, so we omit some details for brevity.  
 From (\ref{app:eq:sn_bvm}) in the proof of Lemma \ref{app:lem:var_bvm}, we have that
    $s_n^2 \approx Dn^{-1}$, which can be made rigorous using the same argument as Lemma \ref{app:lem:lind}. 
    
    For some $\varepsilon > 0$, we can write (\ref{app:infin_array_as4}) as
    \begin{align*}
        s_{n}^{-2}\sum_{i = 1}^{\infty} E\left\{X_{ni}^2 \mathbbm{1}\left(X_{ni}^2> \varepsilon^2s_n^2 \right)\right\} \approx D^{-1}n\sum_{i = 1}^{\infty} (n+i)^{-2} E\left[ Z^2_{ni} \, \mathbbm{1}\left\{Z_{ni}^2> \varepsilon^2 Dn^{-1} (n+i)^{2} 
 \right\}\right].
    \end{align*}
Since $\left(Z_{ni}^2: i \geq 1, n \geq 1\right)$ is uniformly integrable, we have that for each $\delta >0$ there exists $K$ (chosen independently of $n$) such that
\begin{align*}
    \sup_{n \geq 1}\sup_{i\geq 1}E\left\{Z_{ni}^2 \mathbbm{1}\left(Z_{ni}^2 > \varepsilon^2 K\right)\right\} < \delta.
\end{align*}
Now choose $n^*$ sufficiently large so $Dn^* > K$. Then for all $n > n^*$, we have
\begin{align*}
   \sup_{i\geq 1} E\left[Z_{ni}^2 \mathbbm{1}\left\{Z_{ni}^2 > \varepsilon^2 Dn^{-1}(n+i)^2\right\}\right]  \leq     \sup_{i\geq 1} E\left\{Z_{ni}^2 \mathbbm{1}\left(Z_{ni}^2 > \varepsilon^2 Dn\right)\right\}< \delta.
\end{align*}
Then we have
\begin{align*}
    D^{-1} n \sum_{i = 1}^\infty  (n+i)^{-2} E\left[Z_{ni}^2 \mathbbm{1}\left\{Z_{ni}^2 > \varepsilon^2 Dn^{-1}(n+i)^2\right\}\right] \leq \delta D^{-1} n \sum_{i = 1}^\infty (n+i)^{-2} \approx  \delta D^{-1}.
\end{align*}
Since $D$ is just a constant and $\delta > 0$ is arbitrary, we have convergence of the above to $0$ as $n \to \infty$.
\end{proof}

\subsection{Asymptotics of initial estimate}\label{sec:init_estimate}
One can treat (\ref{eq:sgd}) as an approximation to the maximum likelihood estimate. In this case, one can justify setting $\theta_n$ to the maximum likelihood estimate and using (\ref{eq:sgd}) for predictive resampling as the recursive update is asymptotically equivalent. At a small cost of loss of coherence, the maximum likelihood estimate would have the additional benefit of being invariant to the ordering of the data, and the asymptotic theory may be better understood.

The asymptotics of estimates $\theta_n$ arising from maximum likelihood estimation or stochastic approximation (\ref{eq:sgd}) are well understood. Conditions for weak consistency and asymptotic normality of maximum likelihood estimators can be found in \citet[Chapter 5]{van2000}, and additional assumptions needed for strong consistency can be found in the discussions following \citet[Theorem 2.1]{Newey1994} or the seminal work of \citet{Wald1949}. Strong consistency for stochastic approximation can be shown using the almost supermartingale convergence theorem of \cite{Robbins1971} or using ordinary differential equation methods \citep[Chapter 5]{Harold2003}. Asymptotic normality results for stochastic approximation can be found in \cite{Fabian1968} and \cite{Lai2003}.

\setcounter{equation}{0}
\setcounter{theorem}{0}
\setcounter{proposition}{0}
\setcounter{lemma}{0}
\setcounter{corollary}{0}
\setcounter{assumption}{0}

\section{Multivariate extension}\label{sec:app_mv}
\subsection{Setup and Assumptions}
In this section, we consider the multivariate case where $\theta \in \R^p$. Our recursive update for the multivariate case is then identical to (\ref{eq:sgd}), but $s(\theta,Y)$ is a $p$-vector, and $\mathcal{I}(\theta)$ is the $p \times p$ Fisher information matrix. We then have the multivariate version of Assumption \ref{as:existence} below.
\begin{assumption}\label{as:existence_mv}
For each $\theta \in \R^p$, we require the following, where expectations are understood to be over $Y \sim p_\theta$.
The score function $s(\theta,y)= \nabla_\theta\log p_\theta(y)$ exists, each component is finite and has mean zero, that is 
    $
    E_\theta\left\{s_j(\theta,Y) \right\} = 0
    $
   for $j = 1,\ldots,p$.
Furthermore, the score function is square-integrable, that is the Fisher information matrix,
    $\mathcal{I}(\theta) =  E_\theta\left\{s(\theta,Y) \, s(\theta,Y)^{T}\right\},$
    exists. Finally, we require $\mathcal{I}(\cdot)$ to be continuous at $\theta$ and for $p_\theta$ to be regular, that is $\mathcal{I}(\theta)$ is positive definite.
\end{assumption}

We will assume Assumption \ref{as:existence_mv} holds throughout. We now apply the respective martingale central limit theorems to the Cram\'{e}r-Wold device,
$$\zeta_N(t) = t^T \theta_N,$$ 
for each $t \in \R^p$ and $N > n$. We then have the update
\begin{align*}
    \zeta_N(t) &= \zeta_{N-1}(t) + N^{-1} \, t^T \mathcal{I}(\theta_{N-1})^{-1} \, s(\theta_{N-1},Y_N).
\end{align*}
It is clear that we have a martingale with respect to the filtration $\left(\mathcal{F}_N\right)_{N > n}$ for $\mathcal{F}_N = \sigma\left(Y_{n+1},\ldots,Y_N\right)$, as 
\begin{align*}
    E\left\{\zeta_N(t) \mid \mathcal{F}_{N-1}\right\} = \zeta_{N-1}(t).
\end{align*}
Let us define the update term
\begin{align*}
    Z(\theta,Y;t) &= t^T Z(\theta,Y),
\end{align*}
 where we have $E_{\theta}\left\{Z(\theta,Y;t)^2\right\} = t^T \mathcal{I}(\theta)^{-1}t$. 
 In order to apply the martingale central limit theorems,  
we require multivariate extensions of 
Assumption \ref{as:UI} and Proposition \ref{prop:Z4}, which we outline below.
 \begin{assumption}\label{as:UI_mv}
For each $t \in \R^p$, let $Z_N(t)$ denote the natural gradient of the Cram\'{e}r-Wold device:
\begin{align*}
    Z_N(t) =  t^T  \mathcal{I}(\theta_{N-1})^{-1}s(\theta_{N-1}, Y_N).
\end{align*}
The sequence $\left(Z^2_N(t)\right)_{N >n}$ is uniformly integrable under predictive resampling starting from the initial $\theta_n$.
\end{assumption}

\begin{proposition}\label{prop:Z4_mv}
    Let $Z(\theta,Y) = \mathcal{I}(\theta)^{-1}s(\theta, Y)$, where $Y \sim p_\theta$. Suppose there exists non-negative constants $B,C< \infty$ such that the following holds for all $\theta \in \Theta \subseteq \R^p$:
    \begin{align*}
    E_\theta\left\{\|Z(\theta,Y)\|^4\right\} \leq B + C \|\theta\|^4.
    \end{align*}
   Then $\left(Z^2_N(t)\right)_{N > n}$ is uniformly integrable for each $t \in \R^p$.
\end{proposition}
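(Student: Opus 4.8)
The plan is to reduce the multivariate statement to the scalar fourth-moment bound already established in the proof of Proposition \ref{prop:Z4}, exploiting the fact that $Z_N(t) = t^T Z(\theta_{N-1}, Y_N)$ is simply a fixed linear functional of the vector update $Z_N := Z(\theta_{N-1}, Y_N)$. By Cauchy--Schwarz, $Z_N(t)^4 \leq \|t\|^4 \|Z_N\|^4$, so it suffices to show $\sup_{N > n} E(\|Z_N\|^4) < \infty$; this makes $\left(Z_N^2(t)\right)_{N>n}$ bounded in $L_2$ for each fixed $t$, which in turn gives the required uniform integrability. Thus the whole task is to propagate a fourth-moment bound on $\|\theta_N\|$ through the recursion (\ref{eq:sgd}), exactly as in the univariate proof but with $|\theta|$ and $|Z|$ replaced by the norms $\|\theta\|$ and $\|Z\|$.

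First I would derive a one-step recursion for $E(\|\theta_N\|^4 \mid \mathcal{F}_{N-1})$. Writing $\|\theta_N\|^2 = \|\theta_{N-1}\|^2 + 2N^{-1}\theta_{N-1}^T Z_N + N^{-2}\|Z_N\|^2$ and squaring, the expansion produces a single term that is linear in $Z_N$, namely $4 N^{-1}\|\theta_{N-1}\|^2\,\theta_{N-1}^T Z_N$. This is the crucial term, since $E(Z_N \mid \mathcal{F}_{N-1}) = 0$ forces it to vanish under the conditional expectation, leaving only contributions of order $N^{-2}$ or smaller. The remaining terms I would control by Cauchy--Schwarz on the inner products, e.g. $(\theta_{N-1}^T Z_N)^2 \leq \|\theta_{N-1}\|^2\|Z_N\|^2$ and $\left|(\theta_{N-1}^T Z_N)\|Z_N\|^2\right| \leq \|\theta_{N-1}\|\,\|Z_N\|^3$, followed by H\"{o}lder's inequality to express the conditional moments $E(\|Z_N\|^2 \mid \mathcal{F}_{N-1})$ and $E(\|Z_N\|^3 \mid \mathcal{F}_{N-1})$ in terms of $E(\|Z_N\|^4 \mid \mathcal{F}_{N-1})$, and then the hypothesis $E(\|Z_N\|^4 \mid \mathcal{F}_{N-1}) \leq B + C\|\theta_{N-1}\|^4$. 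Using $(a+b)^p \leq a^p + b^p$ for $0 < p < 1$ together with the crude bounds $\|\theta\|^2, \|\theta\|^3, \|\theta\| \leq \|\theta\|^4 + 1$, this collapses to the same form as the scalar case,
$$ E\left(\|\theta_N\|^4 \mid \mathcal{F}_{N-1}\right) \leq \|\theta_{N-1}\|^4 \left(1 + D_1 N^{-2}\right) + D_2 N^{-2}, $$
for finite constants $D_1, D_2$ valid for all $N$.

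From here the argument is verbatim the univariate one: iterating the recursion and taking expectations gives $E(\|\theta_N\|^4) \leq \|\theta_n\|^4 \prod_{i=n+1}^N (1 + D_1 i^{-2}) + D_2 \sum_{i=n+1}^N i^{-2}\prod_{j=i+1}^N(1 + D_1 j^{-2})$, and since $\sum_i i^{-2} < \infty$ the products $\prod(1 + D_1 i^{-2})$ are uniformly bounded, yielding $\sup_{N>n} E(\|\theta_N\|^4) < \infty$. Feeding this back through the hypothesis gives $\sup_{N>n} E(\|Z_N\|^4) \leq B + C\sup_{N>n} E(\|\theta_{N-1}\|^4) < \infty$, which combined with the first paragraph completes the proof. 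I expect the only genuinely new bookkeeping relative to Proposition \ref{prop:Z4} to be the expansion of $\|\theta_N\|^4$: the point requiring care is confirming that the single $O(N^{-1})$ cross term is exactly the one annihilated by the martingale property, so that the recursion closes at order $N^{-2}$ and the infinite product converges. All the inner-product estimates are routine applications of Cauchy--Schwarz and H\"{o}lder.
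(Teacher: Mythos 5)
Your proposal is correct and follows essentially the same route as the paper's proof: reduce via Cauchy--Schwarz to $Z_N(t)^4 \leq \|t\|^4\|Z_N\|^4$, expand $\|\theta_N\|^4$ through the recursion, kill the single $O(N^{-1})$ cross term with the martingale property, control the rest by Cauchy--Schwarz and H\"{o}lder to close the recursion at order $N^{-2}$, and iterate to get $\sup_{N>n}E\left(\|\theta_N\|^4\right) < \infty$ and hence $\sup_{N>n}E\left(\|Z_N\|^4\right) < \infty$. No gaps.
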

\begin{proof}
      As $Z(\theta,Y;t)^4=\left| \langle t,  Z(Y,\theta)\rangle\right|^4$, Cauchy-Schwarz gives
\begin{align}\label{app:eq_zt}
     E\left\{\left| \langle t,  Z(Y,\theta)\rangle\right|^4\right\} &\leq \|t\|^4 E\left\{\|Z(Y,\theta)\|^4\right\}\\
    & \leq  \|t\|^4\left(B + C\|\theta\|^4\right).\notag
\end{align}
We now seek to upper bound $B + C\|\theta_N\|^4$. For  $Z_N = Z(\theta_{N-1},Y_N)$, we have
\begin{align*}
    \|\theta_N\|^4 &= \left\{\|\theta_{N-1} + N^{-1} Z_N \|^2\right\}^2\\
    &= \left(  \|\theta_{N-1}\|^2 
 +N^{-2} \|Z_{N}\|^2  + N^{-1} \langle \theta_{N-1}, Z_N \rangle\right)^2\\
 &= \|\theta_{N-1}\|^4 + N^{-4} \|Z_N\|^4 + N^{-2} \langle \theta_{N-1}, Z_N \rangle^2 \\
 &+ 2\left(N^{-2}\|\theta_{N-1}\|^2\|Z_N\|^2 + N^{-1} \|\theta_{N-1}\|^2 \langle \theta_{N-1}, Z_N \rangle + N^{-3} \|Z_N\|^2 \langle \theta_{N-1}, Z_N \rangle\right).
    \end{align*}
From the martingale condition, we have
\begin{align*}
    E\left( \langle \theta_{N-1}, Z_N \rangle \mid \mathcal{F}_{N-1}\right) = 0.
\end{align*}
Together this gives
\begin{align*}
      E\left(\|\theta_N\|^4\mid \mathcal{F}_{N-1}\right) &= \|\theta_{N-1}\|^4 + N^{-4} E\left(\|Z_N\|^4\mid \mathcal{F}_{N-1}\right) + N^{-2}  E\left(\langle \theta_{N-1}, Z_N \rangle^2 \mid \mathcal{F}_{N-1}\right)\\
      &+ 2\left\{N^{-2} \|\theta_{N-1}\|^2 E\left(\|Z_N\|^2 \mid \mathcal{F}_{N-1}\right) + N^{-3} E\left(\|Z_N\|^2 \langle \theta_{N-1}, Z_N \rangle \mid \mathcal{F}_{N-1}\right)\right\}.
\end{align*}
We can now apply Cauchy-Schwarz to $\langle \theta_{N-1},Z_N\rangle$, which gives
\begin{align*}
 E\left(\|\theta_N\|^4\mid \mathcal{F}_{N-1}\right) &\leq    \|\theta_{N-1}\|^4 + N^{-4} E\left(\|Z_N\|^4\mid \mathcal{F}_{N-1}\right) + 3N^{-2}  \|\theta_{N-1}\|^2E\left(\|Z_N\|^2 \mid \mathcal{F}_{N-1}\right)\\
      &+ 2N^{-3} \|\theta_{N-1}\|E\left(\|Z_N\|^3  \mid \mathcal{F}_{N-1}\right)
\end{align*}
 Following an identical argument to the proof of Proposition \ref{prop:Z4} using H\"{o}lder's inequality,
one can show that
\begin{align*}
     E\left(\|\theta_N\|^4\mid \mathcal{F}_{N-1}\right) &\leq \|\theta_{N-1}\|^4 \left(1 + D_1  N^{-2}\right) + D_2  N^{-2}
\end{align*}
for some non-negative constants $D_1,D_2 < \infty$, which then gives \begin{align*}
    \sup_{N > n}E\left(\|Z_N\|^4\right) < \infty.
\end{align*}
We thus have $\sup_N Z^4_N(t) < \infty$ from (\ref{app:eq_zt}), which implies the uniform integrability of $\left(Z^2_N(t)\right)_{N> n}$ for each $t \in \R^p$.
\end{proof}
 We thus ensure that Assumption \ref{as:UI_mv} holds by controlling $\sup_N E\left\{Z^4_N(t)\right\}<\infty$ with a condition independent of $t$ using Proposition \ref{prop:Z4_mv}, which can be checked relatively easily.


\subsection{Predictive central limit theorem}
For the predictive central limit theorem, we will write the estimate of the limiting covariance matrix as
\begin{align}\label{eq:pred_as_cov}
    \widehat{\Sigma}_N = r_N^2\mathcal{I}(\theta_N)^{-1},
    \end{align}
    where once again we have $r_N^2 =\sum_{i = N+1}^\infty i^{-2}$.
Furthermore, we write $A^{1/2}$ for the principal square root of a positive-definite matrix $A$.
We then have the multivariate version of Theorem \ref{th:pred_CLT}.
\begin{theorem}\label{th:pred_CLT_mv}
    Under Assumptions \ref{as:existence_mv} and \ref{as:UI_mv}, the multivariate version of the parametric  martingale posterior arising from (\ref{eq:sgd}) satisfies
    \begin{align*}        \widehat{\Sigma}_N^{-\frac{1}{2}}\left(\theta_\infty - \theta_N\right) \to \mathcal{N}(0,I)
    \end{align*}
    $\mathcal{F}_\infty$-mixing, where $\widehat{\Sigma}_N$ is defined in (\ref{eq:pred_as_cov}). 
\end{theorem}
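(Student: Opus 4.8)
The plan is to reduce the vector statement to the univariate Theorem~\ref{th:pred_CLT} along one-dimensional projections, and then to exchange the deterministic rate $r_N$ for the random matrix normalization $\widehat{\Sigma}_N^{-1/2}$ using the stable-convergence machinery of Section~\ref{sec:stab}. The reason ordinary Cram\'{e}r--Wold and Slutsky do not suffice is that $\widehat{\Sigma}_N^{-1/2}$ is both random and $N$-dependent, and the limiting covariance $\mathcal{I}(\theta_\infty)^{-1}$ is itself random; I expect this random matrix preconditioner to be the main obstacle, and it is exactly where stable convergence becomes essential rather than cosmetic.

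First I would fix a deterministic $t \in \R^p$ and study the scalar martingale $\zeta_N(t) = t^\top\theta_N$ of Section~\ref{sec:mv}, whose differences are $i^{-1}Z_i(t)$ with conditional variance $i^{-2}\,t^\top\mathcal{I}(\theta_{i-1})^{-1}t$. This is structurally identical to the univariate setting with $\mathcal{I}(\theta_{i-1})^{-1}$ replaced by $t^\top\mathcal{I}(\theta_{i-1})^{-1}t$, so Assumption~\ref{as:UI_mv} (which Proposition~\ref{prop:Z4_mv} guarantees) lets me rerun the proofs of Lemma~\ref{lem:L2}, the variance convergence of Lemma~\ref{app:lem:var}, and the Lindeberg bound of Lemma~\ref{app:lem:lind} verbatim for $\zeta_N(t)$. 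Positive-definiteness of $\mathcal{I}(\theta)^{-1}$ in Assumption~\ref{as:existence_mv} ensures the variance limit $t^\top\mathcal{I}(\theta_\infty)^{-1}t$ is strictly positive almost surely for $t\neq 0$, which is needed to invoke Theorem~\ref{app:th:tailsum_CLT_final}. Absorbing the deterministic ratio $r_N^{-1}s_N(t)\to (t^\top E\{\mathcal{I}(\theta_\infty)^{-1}\}t)^{1/2}$ into the stable limit, this yields the single-direction conclusion that $r_N^{-1}\,t^\top(\theta_\infty-\theta_N)\to\mathcal{N}(0,\,t^\top\mathcal{I}(\theta_\infty)^{-1}t)$ $\mathcal{F}_\infty$-stably, the limit being a mixed Gaussian with random variance.

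Next I would assemble these projections. Since the stable limits are consistent across $t$ with an $\mathcal{F}_\infty$-measurable covariance, the stable Cram\'{e}r--Wold theorem (Proposition~\ref{app:prop:cramwold_stable}) upgrades them to the joint statement $r_N^{-1}(\theta_\infty-\theta_N)\to W$ $\mathcal{F}_\infty$-stably, where $W$ is mixed Gaussian with $W\mid\mathcal{F}_\infty\sim\mathcal{N}(0,\mathcal{I}(\theta_\infty)^{-1})$. Finally, writing $\widehat{\Sigma}_N^{-1/2}=r_N^{-1}\mathcal{I}(\theta_N)^{1/2}$ and using $\theta_N\to\theta_\infty$ almost surely together with continuity of $\mathcal{I}(\cdot)^{1/2}$, I have $\mathcal{I}(\theta_N)^{1/2}\to\mathcal{I}(\theta_\infty)^{1/2}$ in probability to an $\mathcal{F}_\infty$-measurable limit. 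The joint form of the stable Slutsky result (Proposition~\ref{app:prop:slutsky_stable}) followed by the stable continuous-mapping theorem (Proposition~\ref{app:prop:cont_map_stable}) then gives $\widehat{\Sigma}_N^{-1/2}(\theta_\infty-\theta_N)=\mathcal{I}(\theta_N)^{1/2}\,r_N^{-1}(\theta_\infty-\theta_N)\to\mathcal{I}(\theta_\infty)^{1/2}W$ stably. Conditionally on $\mathcal{F}_\infty$ this limit is $\mathcal{N}(0,\mathcal{I}(\theta_\infty)^{1/2}\mathcal{I}(\theta_\infty)^{-1}\mathcal{I}(\theta_\infty)^{1/2})=\mathcal{N}(0,I)$, which is free of $\mathcal{F}_\infty$; hence the convergence is in fact $\mathcal{F}_\infty$-mixing and the marginal law is $\mathcal{N}(0,I)$, as claimed.

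The only technical subtlety beyond the univariate argument is that Proposition~\ref{app:prop:slutsky_stable} is stated for a scalar multiplier, whereas here the multiplier is the matrix $\mathcal{I}(\theta_N)^{1/2}$. I would invoke its immediate extension to matrix factors (equivalently, pair $r_N^{-1}(\theta_\infty-\theta_N)$ with $\mathcal{I}(\theta_N)^{1/2}$, obtain joint stable convergence, and apply continuous mapping to the matrix-vector product), which is routine precisely because the factor converges in probability to an $\mathcal{F}_\infty$-measurable limit. The conceptual crux, and the step I would flag as the genuine obstacle, is that cancelling the random covariance $\mathcal{I}(\theta_\infty)^{-1}$ against the random normalizer $\widehat{\Sigma}_N^{-1/2}$ is only legitimate because $\mathcal{I}(\theta_\infty)^{1/2}$ is measurable with respect to the conditioning $\sigma$-algebra $\mathcal{F}_\infty$; this measurability is what stable convergence is designed to exploit, and it is the reason the final limit is a clean, $\mathcal{F}_\infty$-free standard normal rather than a mixture.
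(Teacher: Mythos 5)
Your proposal is correct and follows essentially the same route as the paper: Cram\'{e}r--Wold projections $\zeta_N(t)=t^\top\theta_N$ handled by rerunning the univariate tail-sum CLT argument, assembly via the stable Cram\'{e}r--Wold device, and then exchange of the deterministic rate $r_N^{-1}$ for the random normalizer $\widehat{\Sigma}_N^{-1/2}=r_N^{-1}\mathcal{I}(\theta_N)^{1/2}$ using stable Slutsky and continuous mapping, with the $\mathcal{F}_\infty$-measurability of $\mathcal{I}(\theta_\infty)^{1/2}$ doing exactly the work you identify. The matrix-multiplier subtlety you flag is handled in the paper the same way you propose, by joint stable convergence of the pair followed by continuous mapping.
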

\begin{proof}
    As the proof is almost identical to Theorem \ref{th:pred_CLT} and Corollary \ref{corr:approx_variance}, we omit repeated details. For each $t \in \R^p$, let us additionally define $X_N(t) = \zeta_N(t) - \zeta_{N-1}(t)$ and 
$$
V_N^2(t) = \sum_{i=N+1}^\infty E\left\{X_i^2(t) \mid \mathcal{F}_{i-1}\right\}, \quad s_N^2(t) = \sum_{i=N+1}^\infty E\left\{X_i^2(t)\right\}. 
$$
We seek to apply Theorem \ref{app:th:tailsum_CLT_final} with $\zeta_N(t)$ as the martingale, so we must check that $\zeta_N(t)$ is bounded in $L_2$, and conditions (\ref{app:tailsum_as1_final}) and (\ref{app:tailsum_as2_final}) hold. We thus extend Lemmas \ref{lem:L2},  \ref{app:lem:var} and \ref{app:lem:lind} to the $\zeta_N(t)$ case.

As stated earlier, we have $E\left\{Z_N^2(t) \mid \mathcal{F}_{N-1}\right\} = t^{T} \mathcal{I}(\theta_{N-1})^{-1}t$. 
Following Lemma \ref{lem:L2}, we can show that $\left(t^{T} \mathcal{I}(\theta_{N-1})^{-1}t\right)_{N > n}$ is uniformly integrable and $\sup_{N}E\left\{\zeta_N^2(t)\right\} < \infty$ so $\left(\zeta_N(t)\right)_{N > n}$ is a martingale bounded in $L_2$. Next, we want to show that
\begin{align*}
    \frac{V_N^2(t)}{s_N^2(t)} \to \eta^2(t) = \frac{t^T \mathcal{I}(\theta_\infty)^{-1}t}{t^T E\left\{\mathcal{I}(\theta_\infty)^{-1}\right\} t}
\end{align*}
almost surely, where $\eta^2(t) > 0$ almost surely for $t \neq 0$. We carry out the same proof as Lemma \ref{app:lem:var} with the continuity of $t^T \mathcal{I}(\theta)^{-1}t$ in place of $\mathcal{I}(\theta)^{-1}$. Two useful intermediate results are
\begin{align*}
    r_N^{-2}s_N^2(t) \to t^T E\left\{\mathcal{I}(\theta_\infty)^{-1}\right\} t, \quad 
    r_N^{-2}{V_N^2(t)} \to t^T\mathcal{I}(\theta_\infty)^{-1}t
\end{align*}
where the second statement holds almost surely. Finally checking the Lindeberg condition is the same as Lemma \ref{app:lem:lind} but replacing $Z_i^2$ with $Z_i^2(t)$ and $s_N^2$ with $s_N^2(t)$. Since we only rely on uniform integrability of $\left(Z_N^2(t)\right)_{N > n}$ and convergence of $r_N^{-2}s_N^2(t)$, the proof can easily be extended to give (\ref{app:tailsum_as1_final}). Theorem \ref{app:th:tailsum_CLT_final} thus gives
\begin{equation*}
    V_N^{-1}(t) \left\{\zeta_\infty(t) - \zeta_N(t) \right\} \to\mathcal{N}(0,1)
\end{equation*}
$\mathcal{F}_\infty$-mixing. 

The final step, which is new for the multivariate case, is to move $V_N^2$ to the right in order to apply the stable version of Cram\'{e}r-Wold:
\begin{align*}
r_N^{-1}\left\{\zeta_\infty(t) - \zeta_N(t) \right\}  &=   r_N^{-1}V_N(t)  \left[V_N^{-1}(t) \left\{\zeta_\infty(t) - \zeta_N(t) \right\} \right]\\
&\to \mathcal{N}\left\{0,t^T\mathcal{I}(\theta_\infty)^{-1}t\right\}
\end{align*}
$\mathcal{F}_\infty$-stably, which can be shown using the stable version of Slutsky's theorem  (Proposition \ref{app:prop:slutsky_stable}).
 By the stable version of Cram\'{e}r-Wold (Proposition \ref{app:prop:cramwold_stable}), we then have 
\begin{align*}
    r_N^{-1}\left(\theta_\infty - \theta_N\right) \to \mathcal{N}\{0,\mathcal{I}(\theta_\infty)^{-1}\}
\end{align*}
$\mathcal{F}_\infty$-stably. 

As $\theta_N \to \theta_\infty$ almost surely, we have $\mathcal{I}(\theta_N)^{1/2} \to \mathcal{I}(\theta_\infty)^{1/2}$ almost surely from the continuity of $\mathcal{I}(\theta)$, where $A^{1/2}$ is the principal square root of a positive definite matrix $A$. 
We can now move the normalizer back to the left using stable convergence, following Propositions \ref{app:prop:cont_map_stable} and \ref{app:prop:slutsky_stable}:
\begin{align*}
     r_N^{-1}\mathcal{I}(\theta_N)^{{1}/{2}}\left(\theta_\infty - \theta_N\right)= \widehat{\Sigma}_N^{-\frac{1}{2}}\left(\theta_\infty - \theta_N\right)  \to \mathcal{N}(0,I)
\end{align*}
$\mathcal{F}_\infty$-stably, as desired. 
\end{proof}
\subsection{Bernstein-von Mises theorem}
For the Bernstein-von Mises result, we require the multivariate version of Assumption \ref{as:bvm}.
\begin{assumption}\label{as:bvm_mv}
  Suppose  $\mathcal{I}(\theta)^{-1}$ is continuously differentiable in a neighbourhood of $\theta^*$. Furthermore, assume the conditions of Proposition \ref{prop:Z4_mv} hold. 
\end{assumption}
We then have the main theorem below.
\begin{theorem}\label{th:bvm_as_mv}
     Suppose $\left(\theta_n\right)_{n \geq 1}$ is estimated from $Y_{1:n}$ drawn from $P^*$ and is strongly consistent at $\theta^*$.
     Under Assumptions \ref{as:existence_mv} and  \ref{as:bvm_mv}, we have
    \begin{align*}
       r_n^{-1}\left(\theta_{n\infty} - \theta_n\right) \to \mathcal{N}\left\{0,\mathcal{I}(\theta^*)^{-1}\right\}
    \end{align*}
    in distribution
    $P^*$-almost surely as $n \to \infty$, where $r_n^2 = {\sum_{i = n+1}^\infty i^{-2}}$.
\end{theorem}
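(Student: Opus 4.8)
The plan is to mirror the two-stage structure of the univariate argument: first establish the statement for a \emph{deterministic} sequence $(\theta_n)$ converging to $\theta^*$, and then lift it to the strongly consistent estimator by the pointwise-in-$\omega$ device used in the proof of Theorem~\ref{th:bvm_as}. Since $\theta_n\to\theta^*$ holds $P^*$-almost surely, on the probability-one event where this convergence occurs I may treat $(\theta_n(\omega))_{n\ge1}$ as a fixed convergent sequence, apply the deterministic result, and conclude that the stated weak convergence holds with $P^*$-probability one. So the whole content reduces to a deterministic multivariate Bernstein--von Mises statement.

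For the deterministic multivariate case I would reduce to the univariate array central limit theorem through the Cram\'{e}r--Wold device. Fix $t\in\R^p$ and set $\zeta_{ni}(t)=t^\top\theta_{ni}$, so that for each $n$ the sequence $(\zeta_{ni}(t),\mathcal{F}_{ni})_{i\ge1}$ is a square-integrable martingale with differences $X_{ni}(t)=(n+i)^{-1}Z_{ni}(t)$, where $Z_{ni}(t)=t^\top\mathcal{I}(\theta_{n,i-1})^{-1}s(\theta_{n,i-1},\widetilde Y_{ni})$ and $E\{Z_{ni}(t)^2\mid\mathcal{F}_{n,i-1}\}=t^\top\mathcal{I}(\theta_{n,i-1})^{-1}t$. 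This yields a square-integrable infinite martingale array to which Theorem~\ref{app:th:array_clt_final} applies. Verifying its hypotheses is a direct transcription of Lemmas~\ref{app:lem:sn_UI}, \ref{app:lem:L2} and \ref{app:lem:var_bvm}, replacing $\mathcal{I}(\theta)^{-1}$ by the scalar $g_t(\theta)=t^\top\mathcal{I}(\theta)^{-1}t$, the absolute value by the norm $\|\cdot\|$, and the compact interval around $\theta^*$ by a closed ball. The required uniform integrability of $(Z_{ni}(t)^2)$ follows from Proposition~\ref{prop:Z4_mv} together with $Z_{ni}(t)^2\le\|t\|^2\|Z_{ni}\|^2$ and the array extension argument of Proposition~\ref{app:prop:Z4_bvm}; the uniform integrability of $(g_t(\theta_{ni}))$ then follows by domination and the conditional-expectation argument of Lemma~\ref{app:lem:sn_UI}.

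With these in hand, Theorem~\ref{app:th:array_clt_final} gives $V_n^{-1}(t)\{\zeta_{n\infty}(t)-\zeta_{n0}(t)\}\to\mathcal{N}(0,1)$ in distribution, where $V_n^2(t)=\sum_{i\ge1}(n+i)^{-2}g_t(\theta_{n,i-1})$. The multivariate variance lemma gives $r_n^{-2}V_n^2(t)\to g_t(\theta^*)=t^\top\mathcal{I}(\theta^*)^{-1}t$ in probability, so Slutsky's theorem converts the norming to the deterministic rate $r_n$, giving $r_n^{-1}t^\top(\theta_{n\infty}-\theta_n)\to\mathcal{N}\{0,t^\top\mathcal{I}(\theta^*)^{-1}t\}$ in distribution for every $t$. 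Crucially, because the limiting variance $\mathcal{I}(\theta^*)^{-1}$ is deterministic here---unlike the random $\mathcal{I}(\theta_\infty)^{-1}$ of Theorem~\ref{th:pred_CLT_mv}---the ordinary Slutsky and ordinary Cram\'{e}r--Wold theorems suffice, and I do not need the stable refinement of Proposition~\ref{app:prop:cramwold_stable}. Assembling the one-dimensional limits through Cram\'{e}r--Wold delivers $r_n^{-1}(\theta_{n\infty}-\theta_n)\to\mathcal{N}\{0,\mathcal{I}(\theta^*)^{-1}\}$ in distribution, which is the deterministic case.

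The main obstacle is the uniform variance convergence $r_n^{-2}V_n^2(t)\to t^\top\mathcal{I}(\theta^*)^{-1}t$ in probability, the multivariate analogue of Lemma~\ref{app:lem:var_bvm}. The difficulty is that $g_t(\theta_{n,i-1})\to g_t(\theta^*)$ pointwise in $i$ is not enough: I need control uniform over the entire imputed trajectory $i\ge1$. The argument I would use splits $E|g_t(\theta_{ni})-g_t(\theta^*)|$ over the events $\{\theta_{ni}\in I\}$ and $\{\theta_{ni}\in I^c\}$ for a closed ball $I$ around $\theta^*$, bounding the first contribution via Lipschitz continuity of $g_t$ on $I$ (from the continuous differentiability in Assumption~\ref{as:bvm_mv}) against $\sup_i E\|\theta_{ni}-\theta^*\|$, and the second via uniform integrability of $(g_t(\theta_{ni}))$ together with the Markov bound $P(\theta_{ni}\in I^c)\le r^{-2}\sup_i E\|\theta_{ni}-\theta^*\|^2$; both suprema vanish by the multivariate version of Lemma~\ref{app:lem:L2}. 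Everything else, including the Lindeberg condition, transcribes routinely from the univariate proofs.
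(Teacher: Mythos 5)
Your proposal is correct and follows essentially the same route as the paper: the paper likewise reduces to the univariate infinite-array CLT via the ordinary Cram\'{e}r--Wold device applied to $t^\top\theta_{ni}$, notes that Assumption \ref{as:bvm_mv} gives continuous differentiability of $t^\top\mathcal{I}(\theta)^{-1}t$ and uniform integrability of $(Z_{ni}(t)^2)$ through Proposition \ref{prop:Z4_mv}, observes that stable convergence is unnecessary because the limiting variance is the deterministic $\mathcal{I}(\theta^*)^{-1}$, and lifts from deterministic sequences to the strongly consistent estimator by the same pointwise-in-$\omega$ argument as Theorem \ref{th:bvm_as}. The only difference is that you spell out the transcription of Lemmas \ref{app:lem:sn_UI}, \ref{app:lem:L2} and \ref{app:lem:var_bvm} that the paper leaves implicit.
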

\begin{proof}
    The multivariate case is a simple extension of Theorem \ref{th:bvm} again using the Cram\'{e}r-Wold device, where we do not need to worry about stable convergence as $\theta_n\to \theta^*$ for some deterministic value $\theta^*$. 
Under Assumption \ref{as:bvm_mv}, it is not too challenging to verify that for each $t \in \mathbb{R}$, we have that $t^T{\mathcal{I}(\theta)^{-1}}t$ is continuously differentiable in a neighborhood of $\theta^*$ and $\left(Z_{ni}^2\left(t\right)\right)_{i \geq 1, n \geq 1}$ is uniformly integrable, where
\begin{align*}
    Z_{ni}(t) = t^T \mathcal{I}(\theta_{n,i-1})^{-1} s\left(\theta_{ni,i-1}, \widetilde{Y}_{ni}\right).
\end{align*}
We can thus apply the exact same proof as Theorem \ref{th:bvm}, which we omit, giving the following.
\begin{theorem}\label{app:th:bvm_as_mv_cw}
    Suppose $\left(\theta_n\right)_{n \geq 1}$ is estimated from $Y_{1:n}$ drawn from $P^*$ and is strongly consistent at $\theta^*$.
    Under Assumptions \ref{as:existence_mv} and \ref{as:bvm_mv}, for each $t \in \mathbb{R}$, we have
    \begin{align*}
       r_n^{-1}t^T\left(\theta_{n\infty} - \theta_n\right) \to \mathcal{N}\left\{0,t^T\mathcal{I}(\theta^*)^{-1}t\right\}
    \end{align*}
    in distribution
    $P^*$-almost surely as $n \to \infty$, where $r_n^2 = {\sum_{i = n+1}^\infty i^{-2}}$.
\end{theorem}
A simple application of the (regular) Cram\'{e}r-Wold device then gives Theorem \ref{th:bvm_as_mv}. 
\end{proof}

\setcounter{equation}{0}
\setcounter{theorem}{0}
\setcounter{proposition}{0}
\setcounter{lemma}{0}
\setcounter{corollary}{0}
\setcounter{assumption}{0}
\section{Regression extension}\label{sec:app_reg}
\subsection{Setup and assumptions}
We now cover the extension of Theorems \ref{th:pred_CLT_mv} and \ref{th:bvm_as_mv}  to the regression setup, where we follow exactly the setup of the main paper. Let $X \in \mathcal{X} \subseteq \R^p$, and $Y \in \mathcal{Y}$ where $\mathcal{Y}\subseteq \R$ or $\mathcal{Y}= \{0,1\}$ in the continuous and binary case respectively, and let $p_{\theta}(y \mid x)$ denote the conditional model.  For predictive resampling, we will assume that $X_{n+1:\infty}$ is drawn independent and identically distributed from the empirical distribution of $X_{1:n}$, which we write as $p_n^x=n^{-1}\sum_{i = 1}^n \delta_{X_i}$. We will then study predictive resampling under the following update rule:
\begin{align}\label{eq:sgd_reg}
        \theta_{N} = \theta_{N-1} + N^{-1}\widehat{\mathcal{I}}_n(\theta_{N-1})^{-1} s(\theta_{N-1}, Y_{N}; X_{N}),
\end{align}
for $N > n$, where $\theta_n$ is the initial estimate. The case where the deterministic sequence $X_{1:\infty}$ and $\mathcal{I}(\theta)$ is known is then a simplification, which we do not cover here. To begin, we have the following assumption.
\begin{assumption}\label{as:existence_reg}
For each $\theta \in \R^p$ we require the following, where expectations are understood to be over $Y \mid X \sim p_\theta(\cdot \mid X)$ and $X \sim p_n^x$.
The score function $s(\theta,y;x)= \nabla_\theta\log p_\theta(y\mid x)$ exists, each component is finite and has mean zero, that is 
    $
    E_{\theta}\left\{s_j(\theta,Y;x) \right\} = 0
    $
   for each $x$ and  $j = 1,\ldots,p$.
Furthermore, the score function is square-integrable, that is the (estimated) Fisher information matrix,
\begin{align*}
    \widehat{\mathcal{I}}_n(\theta) =  \frac{1}{n}\sum_{i = 1}^n \mathcal{I}(\theta; X_i)
\end{align*}
    exists. Finally, we require $\widehat{\mathcal{I}}_n(\cdot)$ to be continuous at $\theta$ and $\widehat{\mathcal{I}}_n(\theta)$ to be  positive definite.
\end{assumption}

Let $\mathcal{F}_N = \sigma\left\{(Y_{n+1},X_{n+1}),\ldots,(Y_N,X_N)\right\}$, and consider $p = 1$ and $\theta \in \mathbb{R}$ for now. Then we have that $(\theta_N, \mathcal{F}_N)_{N \geq n}$ is a martingale as
\begin{align*}
    E\left\{s(\theta_{N-1}, Y_{N}; X_{N}) \mid \mathcal{F}_{N-1}\right\} = E\left[E\left\{s(\theta_{N-1}, Y_{N}; X_{N}) \mid \mathcal{F}_{N-1}, X_N\right\} \mid \mathcal{F}_{N-1}\right]
\end{align*}
where the inner conditional expectation is 0 almost surely, regardless of the distribution of $X_N$. Furthermore, we have that the variance of the score function is as expected:
\begin{align*}
    E\left\{\widehat{\mathcal{I}}_n(\theta_{N-1})^{-2}s(\theta_{N-1}, Y_{N}; X_{N})^2 \mid \mathcal{F}_{N-1}\right\} &= \widehat{\mathcal{I}}_n(\theta_{N-1})^{-2}E\left[E\left\{s(\theta_{N-1}, Y_{N}; X_{N})^2 \mid \mathcal{F}_{N-1}, X_N\right\} \mid \mathcal{F}_{N-1}\right]\\
    &= \widehat{\mathcal{I}}_n(\theta_{N-1})^{-2} E\left[\mathcal{I}(\theta_{N-1}; X_N)\mid \mathcal{F}_{N-1}\right] = \widehat{\mathcal{I}}_n(\theta_{N-1})^{-1}
\end{align*}
where the final equality follows as $X_N \sim p_n^x$. This clearly extends for the case where $p > 1$.

We can now state the regression versions of Assumption \ref{as:UI_mv} and Proposition \ref{prop:Z4_mv}, where  $\widehat{\mathcal{I}}_n(\theta)$ is considered as fixed.   
 \begin{assumption}\label{as:UI_reg}
For each $t \in \R^p$, let $Z_N(t)$ denote the natural gradient of the Cram\'{e}r-Wold device:
\begin{align*}
    Z_N(t) =  t^T  \widehat{\mathcal{I}}_n(\theta_{N-1})^{-1}s(\theta_{N-1}, Y_N;X_N).
\end{align*}
The sequence $\left(Z^2_N(t)\right)_{N >n}$ is uniformly integrable under predictive resampling starting from the initial $\theta_n$.
\end{assumption}
\begin{corollary}\label{corr:Z4_reg}
       Let $Z(\theta,Y; X) = \widehat{\mathcal{I}}_n(\theta)^{-1}s(\theta, Y; X)$, where  $X \sim p_n^x$ and $Y \mid X \sim p_\theta(\cdot \mid X)$. Suppose there exists non-negative constants $B,C< \infty$ such that the following holds for all $\theta \in \Theta \subseteq \R^p$:
    \begin{align*}
    E_{\theta,X}\left\{\|Z(\theta,Y;X)\|^4\right\} \leq B + C \|\theta\|^4.
    \end{align*}
   Then $\left(Z^2_N(t)\right)_{N > n}$ is uniformly integrable for each $t \in \R^p$. 
\end{corollary}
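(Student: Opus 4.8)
The plan is to reduce the claim directly to the argument already carried out for Proposition~\ref{prop:Z4_mv}, the only genuinely new ingredient being the extra layer of conditioning on the imputed covariate $X_N$. Throughout, the conditional expectation $E(\cdot \mid \mathcal{F}_{N-1})$ is now taken over the predictive draw $X_N \sim p_n^x$ followed by $Y_N \mid X_N \sim p_{\theta_{N-1}}(\cdot \mid X_N)$, and I work with the vector-valued natural gradient $Z_N = \widehat{\mathcal{I}}_n(\theta_{N-1})^{-1} s(\theta_{N-1}, Y_N; X_N)$ from the update (\ref{eq:sgd_reg}), noting that the fixed preconditioner $\widehat{\mathcal{I}}_n(\theta)^{-1}$ is simply absorbed into the definition of $Z$.

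First I would record the two conditional moment facts that drive the recursion. By the tower property (conditioning on $X_N$ within $\mathcal{F}_{N-1}$) and the mean-zero property of the score, the martingale identity $E(\langle \theta_{N-1}, Z_N\rangle \mid \mathcal{F}_{N-1}) = 0$ holds exactly as verified in the display preceding the corollary; this is what eliminates the terms carrying an odd power of $Z_N$. The crucial new step is the conditional fourth-moment bound: applying the tower property once more,
$$E(\|Z_N\|^4 \mid \mathcal{F}_{N-1}) = E_{\theta_{N-1}, X}\{\|Z(\theta_{N-1}, Y; X)\|^4\} \leq B + C\|\theta_{N-1}\|^4,$$
where the inequality is precisely the hypothesis of the corollary, with the expectation understood over $X \sim p_n^x$ and $Y \mid X$. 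The point to emphasize is that the hypothesis is stated for exactly this joint law, so no additional work is needed to match it to the predictive conditional distribution.

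With these two facts in hand, the remainder is verbatim the multivariate computation in the proof of Proposition~\ref{prop:Z4_mv}. Expanding $E(\|\theta_N\|^4 \mid \mathcal{F}_{N-1})$ from $\theta_N = \theta_{N-1} + N^{-1} Z_N$, discarding the odd-power terms via the martingale identity, and controlling the survivors with Cauchy--Schwarz on $\langle \theta_{N-1}, Z_N\rangle$ and H\"{o}lder's inequality on the lower conditional moments of $\|Z_N\|$, I obtain a recursion of the form
$$E(\|\theta_N\|^4 \mid \mathcal{F}_{N-1}) \leq \|\theta_{N-1}\|^4\left(1 + D_1 N^{-2}\right) + D_2 N^{-2}$$
for finite non-negative constants $D_1, D_2$. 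Taking expectations, iterating, and using $\sum_N N^{-2} < \infty$ (so that $\prod_N (1 + D_1 N^{-2}) < \infty$) yields $\sup_{N>n} E(\|\theta_N\|^4) < \infty$, whence $\sup_{N>n} E(\|Z_N\|^4) < \infty$ by the hypothesis.

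Finally, for any fixed $t \in \R^p$, Cauchy--Schwarz gives $Z_N(t)^4 = |\langle t, Z_N\rangle|^4 \leq \|t\|^4 \|Z_N\|^4$, so $\sup_{N>n} E\{Z_N(t)^4\} \leq \|t\|^4 \sup_{N>n} E(\|Z_N\|^4) < \infty$, which is sufficient for uniform integrability of $(Z_N^2(t))_{N>n}$ and hence verifies Assumption~\ref{as:UI_reg}. I do not expect a deep obstacle here; the only care required is the bookkeeping of the extra conditioning on $X_N$, namely checking that the tower property leaves both the martingale identity and the fourth-moment bound in exactly the form used in the \emph{i.i.d.} case, after which the proof of Proposition~\ref{prop:Z4_mv} transfers without change.
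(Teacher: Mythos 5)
Your proof is correct and is exactly the argument the paper intends: the paper omits the proof with the remark that it is ``practically identical to Proposition \ref{prop:Z4_mv},'' and your write-up supplies precisely the two points that make the transfer work (the tower property over $X_N \sim p_n^x$ yielding the martingale identity and the conditional fourth-moment bound), after which the recursion and the Cauchy--Schwarz reduction to $Z_N(t)$ proceed verbatim. No gaps.
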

The proof is practically identical to Proposition \ref{prop:Z4_mv}, so we omit it.

\subsection{Predictive central limit theorem}
For the predictive central limit theorem, the proof of the regression case is identical to the multivariate case (Theorem \ref{th:pred_CLT_mv}), so we omit the proof. We define the covariate matrix estimate as
\begin{align}\label{eq:pred_as_cov_reg}
    \widehat{\Sigma}_N = r_N^2\widehat{\mathcal{I}}_n(\theta_N)^{-1},
    \end{align}
    where once again we have $r_N^2 =\sum_{i = N+1}^\infty i^{-2}$ and $\widehat{\mathcal{I}}_n(\cdot)$ is considered fixed.
\begin{theorem}\label{th:pred_CLT_reg}
    Under Assumptions \ref{as:existence_reg} and \ref{as:UI_reg}, the parametric  martingale posterior arising from (\ref{eq:sgd_reg}) satisfies
    \begin{align*}        \widehat{\Sigma}_N^{-\frac{1}{2}}\left(\theta_\infty - \theta_N\right) \to \mathcal{N}(0,I)
    \end{align*}
    $\mathcal{F}_\infty$-mixing, where $\widehat{\Sigma}_N$ is defined in (\ref{eq:pred_as_cov_reg}).
\end{theorem}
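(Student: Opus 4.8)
The plan is to follow the proof of Theorem \ref{th:pred_CLT_mv} essentially line by line, with the held-fixed estimated Fisher information $\widehat{\mathcal{I}}_n(\theta)$ playing the role that $\mathcal{I}(\theta)$ played in the pure multivariate case. First I would invoke the Cram\'{e}r-Wold device, setting $\zeta_N(t) = t^T \theta_N$ with martingale differences $X_N(t) = \zeta_N(t) - \zeta_{N-1}(t) = N^{-1} Z_N(t)$, where $Z_N(t) = t^T \widehat{\mathcal{I}}_n(\theta_{N-1})^{-1} s(\theta_{N-1}, Y_N; X_N)$. The crucial preliminary observation, already carried out in the setup of this section, is that iterated conditioning first on $X_N$ and then averaging over $X_N \sim p_n^x$ yields $E\{Z_N(t)^2 \mid \mathcal{F}_{N-1}\} = t^T \widehat{\mathcal{I}}_n(\theta_{N-1})^{-1} t$ almost surely. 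This is precisely the conditional-variance identity that drove the multivariate argument, now with $\widehat{\mathcal{I}}_n$ in place of $\mathcal{I}$; since $\widehat{\mathcal{I}}_n$ is deterministic once $X_{1:n}$ is fixed and is continuous and positive definite by Assumption \ref{as:existence_reg}, it serves exactly the same function.

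Next I would verify the three ingredients of Theorem \ref{app:th:tailsum_CLT_final} for the scalar martingale $(\zeta_N(t), \mathcal{F}_N)_{N > n}$. Boundedness in $L_2$ and uniform integrability of $(\widehat{\mathcal{I}}_n(\theta_{N-1})^{-1})_{N>n}$ follow from the regression analogue of Lemma \ref{lem:L2} together with Assumption \ref{as:UI_reg}, which in turn is checkable through Corollary \ref{corr:Z4_reg}. The variance condition is the analogue of Lemma \ref{app:lem:var}: by continuity of $t^T \widehat{\mathcal{I}}_n(\theta)^{-1} t$ and $\theta_N \to \theta_\infty$ almost surely, the weighted-average argument gives $r_N^{-2} V_N^2(t) \to t^T \widehat{\mathcal{I}}_n(\theta_\infty)^{-1} t$ almost surely and $r_N^{-2} s_N^2(t) \to t^T E\{\widehat{\mathcal{I}}_n(\theta_\infty)^{-1}\} t$, so their ratio converges almost surely to a strictly positive limit $\eta^2(t)$ for $t \neq 0$. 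The Lindeberg condition is the analogue of Lemma \ref{app:lem:lind}; since it relies only on the uniform integrability of $(Z_N^2(t))_{N>n}$ and the convergence of $r_N^{-2} s_N^2(t)$, it transfers verbatim. Theorem \ref{app:th:tailsum_CLT_final} then delivers $V_N^{-1}(t)\{\zeta_\infty(t) - \zeta_N(t)\} \to \mathcal{N}(0,1)$ $\mathcal{F}_\infty$-mixing.

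Then I would reassemble the multivariate statement exactly as in Theorem \ref{th:pred_CLT_mv}. Shifting the random norming back to the left via the stable Slutsky theorem (Proposition \ref{app:prop:slutsky_stable}) gives $r_N^{-1}\{\zeta_\infty(t) - \zeta_N(t)\} \to \mathcal{N}\{0, t^T \widehat{\mathcal{I}}_n(\theta_\infty)^{-1} t\}$ $\mathcal{F}_\infty$-stably; the stable Cram\'{e}r-Wold device (Proposition \ref{app:prop:cramwold_stable}) upgrades this to $r_N^{-1}(\theta_\infty - \theta_N) \to \mathcal{N}\{0, \widehat{\mathcal{I}}_n(\theta_\infty)^{-1}\}$ $\mathcal{F}_\infty$-stably; and finally, left-multiplying by $\widehat{\mathcal{I}}_n(\theta_N)^{1/2}$, using almost-sure convergence of the principal square root from continuity of $\widehat{\mathcal{I}}_n$ and the stable continuous-mapping and Slutsky results (Propositions \ref{app:prop:cont_map_stable} and \ref{app:prop:slutsky_stable}), lands on $\widehat{\Sigma}_N^{-1/2}(\theta_\infty - \theta_N) \to \mathcal{N}(0,I)$ $\mathcal{F}_\infty$-mixing.

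The main obstacle, and really the only place the regression structure genuinely enters, is the conditional-variance bookkeeping: one must confirm that the random covariate $X_N$ disrupts neither the martingale property nor the identity $E\{Z_N(t)^2 \mid \mathcal{F}_{N-1}\} = t^T \widehat{\mathcal{I}}_n(\theta_{N-1})^{-1} t$. Both hold because $X_N \sim p_n^x$ is drawn independently of the past and is integrated out by the very definition of $\widehat{\mathcal{I}}_n$ as the empirical mean of $\mathcal{I}(\theta; X_i)$; conditioning on $X_N$ first and then averaging is exactly what makes the preconditioner cancel the conditional second moment. Once this is secured, $\widehat{\mathcal{I}}_n$ behaves as a deterministic, continuous, positive-definite surrogate for $\mathcal{I}$, and every remaining step is a faithful transcription of the multivariate proof, which justifies omitting the repeated details.
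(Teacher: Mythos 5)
Your proposal is correct and matches the paper's approach exactly: the paper itself omits the proof with the remark that it is identical to the multivariate case (Theorem \ref{th:pred_CLT_mv}), having already established in the setup of Section \ref{sec:app_reg} the one genuinely new ingredient you single out, namely that conditioning on $X_N \sim p_n^x$ and averaging preserves the martingale property and yields $E\{Z_N(t)^2 \mid \mathcal{F}_{N-1}\} = t^T \widehat{\mathcal{I}}_n(\theta_{N-1})^{-1} t$. Your line-by-line transcription of the multivariate argument with $\widehat{\mathcal{I}}_n$ as a fixed, continuous, positive-definite surrogate for $\mathcal{I}$ is precisely the intended justification.
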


\subsection{Bernstein-von Mises theorem}
Once again, we begin by letting $(\theta_n)_{n\geq 1}$ be a deterministic sequence converging to $\theta^*$. The Bernstein-von Mises theorem requires a bit more care in the regression case, as the gradient preconditioner depends on the fixed design sequence $X_{1:n}$. 
We will now take $n\to \infty$, so $\widehat{\mathcal{I}}_n(\theta)$ will vary, unlike in previous subsections. We will consider the same predictive resampling scheme for the covariates from $p_n^x$ as in the previous subsections.
As in \citet[Example 2.28]{van2000}, we can make assumptions on the deterministic sequence $X_{1:\infty}$ in order to apply Theorem \ref{th:bvm_as_mv}. 
We now consider the relevant assumptions.
\begin{assumption}\label{app:as:bvm_reg}
        Suppose there exists $m$ such that Assumption \ref{as:existence_reg} holds for all $n \geq m$, and the limit $\mathcal{I}(\theta^*) = \lim_{n \to \infty} \widehat{\mathcal{I}}_n(\theta_n)$ exists. 
    \end{assumption}
The requirement of $n \geq m$ arises in the regression setting as $\widehat{\mathcal{I}}_n(\theta)$ may not be invertible if $n < p$, for example if $\widehat{\mathcal{I}}_n(\theta)$ contains a term like $n^{-1}\sum_{i = 1}^n X_i X_i^T$.
Note that the above is a deterministic convergence, as we are in the fixed design setting.
    
    We now introduce a regression version of Proposition \ref{prop:Z4_mv}, where we let
\begin{align*}
     \widetilde{X}_{ni} \sim p_n^x, \, \,  \widetilde{Y}_{ni}  \sim p_{\theta_{n,i-1}}(\cdot \mid \widetilde{X}_{ni}),\quad 
    \theta_{ni} = \theta_{n,i-1} + (n+i)^{-1}Z_{ni}
\end{align*}
and
\begin{align*}
    Z_{ni}  = \widehat{\mathcal{I}}_n\left(\theta_{n,i-1}\right)^{-1}s\left(\theta_{n,i-1}, \widetilde{Y}_{ni}; \widetilde{X}_{ni}\right)
\end{align*}
for  $i \geq 1, n \geq m$. Again, the condition to be checked will be an implicit assumption on the design matrix. 
\begin{proposition}\label{prop:Z4_reg}
       Let $Z_n(\theta,Y; X) = \widehat{\mathcal{I}}_n(\theta)^{-1}s(\theta, Y; X)$, where  $X \sim p_n^x\, $ and $\, Y \mid X \sim p_\theta(\cdot \mid X)$. Suppose there exists non-negative constants $B,C< \infty$ independent of $n$ such that the following holds for all $\theta \in \Theta \subseteq \R$:
    \begin{align*}
    E_{\theta,X}\left\{\|Z_n(\theta,Y;X)\|^4\right\} \leq B + C \|\theta\|^4
    \end{align*}
     for all $n\geq m$. Furthermore, let $(\theta_n)_{n \geq 1}$ be a deterministic convergent sequence. Then $\left(Z^2_{ni}(t)\right)_{i\geq 1, n \geq m}$ is uniformly integrable for each $t \in \mathbb{R}^p$, where $Z_{ni}(t) = t^T Z_{ni}$. 
\end{proposition}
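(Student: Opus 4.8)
The plan is to establish the single uniform bound $\sup_{i \geq 1,\, n \geq m} E\{Z_{ni}(t)^4\} < \infty$, since boundedness in $L_2$ of the family $\left(Z_{ni}^2(t)\right)_{i \geq 1,\, n \geq m}$ immediately yields its uniform integrability. I would first reduce this via two elementary steps. By Cauchy--Schwarz, $Z_{ni}(t)^4 = \langle t, Z_{ni}\rangle^4 \leq \|t\|^4 \|Z_{ni}\|^4$, so it suffices to control $\sup_{i,n} E\left(\|Z_{ni}\|^4\right)$. Next, conditioning on $\mathcal{F}_{n,i-1}$ (on which $\theta_{n,i-1}$ is measurable) and invoking the hypothesis at $\theta = \theta_{n,i-1}$ gives $E\left(\|Z_{ni}\|^4 \mid \mathcal{F}_{n,i-1}\right) \leq B + C\|\theta_{n,i-1}\|^4$ for all $n \geq m$, with $B,C$ independent of $n$. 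The task therefore reduces to bounding $\sup_{i,n} E\left(\|\theta_{ni}\|^4\right)$.

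For this I would run the fourth-moment recursion of Proposition \ref{prop:Z4_mv}, now applied row-by-row with step sizes $(n+i)^{-1}$ in place of $N^{-1}$. Expanding $\|\theta_{ni}\|^4 = \left(\|\theta_{n,i-1} + (n+i)^{-1} Z_{ni}\|^2\right)^2$ and taking conditional expectations, the cross term $E\left(\langle \theta_{n,i-1}, Z_{ni}\rangle \mid \mathcal{F}_{n,i-1}\right)$ vanishes by the martingale property verified earlier in this section, where the tower property over $\widetilde{X}_{ni} \sim p_n^x$ forces the conditional score mean to be zero. Bounding the remaining terms by Cauchy--Schwarz and H\"{o}lder exactly as in Proposition \ref{prop:Z4_mv}, together with the conditional moment hypothesis, would yield
\begin{align*}
E\left(\|\theta_{ni}\|^4 \mid \mathcal{F}_{n,i-1}\right) \leq \|\theta_{n,i-1}\|^4\left\{1 + D_1 (n+i)^{-2}\right\} + D_2 (n+i)^{-2}
\end{align*}
for constants $D_1, D_2 < \infty$ depending only on $B, C$, hence independent of $n$. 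Iterating expectations then gives
\begin{align*}
E\left(\|\theta_{ni}\|^4\right) \leq \|\theta_n\|^4 \prod_{j=1}^i \left\{1 + D_1(n+j)^{-2}\right\} + D_2 \sum_{j=1}^i (n+j)^{-2} \prod_{k=j+1}^i \left\{1 + D_1(n+k)^{-2}\right\}.
\end{align*}

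The crux --- and the one place where the array/regression setting demands more than a transcription of the earlier proofs --- is securing uniformity over the second index $n$. The key observation I would exploit is that $\sum_{j=1}^\infty (n+j)^{-2} = \sum_{j = n+1}^\infty j^{-2} \leq n^{-1} \leq m^{-1}$ by the integral-test bound used in the main text, so that
\begin{align*}
\prod_{j=1}^i \left\{1 + D_1 (n+j)^{-2}\right\} \leq \exp\left(D_1 \sum_{j=n+1}^\infty j^{-2}\right) \leq \exp\left(D_1 m^{-1}\right)
\end{align*}
is dominated by a single constant uniformly in both $i \geq 1$ and $n \geq m$. This would produce $\sup_{i \geq 1} E\left(\|\theta_{ni}\|^4\right) \leq D_3\|\theta_n\|^4 + D_4 n^{-1}$ with $D_3, D_4$ independent of $n$. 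Since $(\theta_n)_{n \geq 1}$ is convergent we have $\sup_{n \geq m}\|\theta_n\|^4 < \infty$, and $n^{-1} \leq m^{-1}$, so taking the supremum over $n \geq m$ gives $\sup_{i,n} E\left(\|\theta_{ni}\|^4\right) < \infty$. Combining with the two reductions yields $\sup_{i,n} E\left(Z_{ni}(t)^4\right) \leq \|t\|^4\left\{B + C \sup_{i,n}E\left(\|\theta_{n,i-1}\|^4\right)\right\} < \infty$, the required $L_2$-boundedness. The main obstacle is thus not any new estimate but keeping every constant free of $n$ throughout the recursion; this works precisely because the hypothesis supplies $B,C$ independent of $n$ and the tail sum $\sum_{j>n} j^{-2}$ is uniformly small for $n \geq m$.
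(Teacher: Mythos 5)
Your proposal is correct and follows essentially the same route as the paper: the paper likewise verifies that the cross term $E\left(\langle \theta_{n,i-1}, Z_{ni}\rangle \mid \mathcal{F}_{n,i-1}\right)$ vanishes via the tower property over $\widetilde{X}_{ni} \sim p_n^x$, then reruns the fourth-moment recursion of Propositions \ref{prop:Z4_mv} and \ref{app:prop:Z4_bvm} with $n$-independent constants to obtain $\sup_{i\geq 1} E\left(\|Z_{ni}\|^4\right) \leq D_1\|\theta_n\|^4 + D_2 n^{-1}$, and concludes by taking the supremum over $n \geq m$ using convergence of $(\theta_n)$. Your write-up is merely more explicit about the uniform product bound $\exp\left(D_1 m^{-1}\right)$, which the paper leaves implicit.
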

\begin{proof}
 The proof starts with an identical argument to that of Proposition \ref{prop:Z4_mv}. We can just need to check the following term is 0:
 \begin{align*}
    E\left(\langle \theta_{n,i-1}, Z_{ni}\rangle \mid \mathcal{F}_{n,i-1}\right) =  \langle \theta_{n,i-1}, E\left( Z_n \mid \mathcal{F}_{n,i-1}\right)\rangle = 0.
 \end{align*}
 This holds from the earlier argument that $E(Z_{ni} \mid \mathcal{F}_{n,i-1}) = E\left\{E\left(Z_{ni} \mid X_{ni},\mathcal{F}_{n,i-1}\right)\mid \mathcal{F}_{n,i-1}\right\} =0_p$.

 From the same argument as Proposition \ref{app:prop:Z4_bvm}, we can show that
 \begin{align*}
     E\left(\left\|Z_{ni}\right\|^4\right)&\leq D_1\|\theta_{n0}\|^4 + D_2 \sum_{i = n+1}^\infty i^{-2}  \\
     &\leq D_1\|\theta_n\|^4 + D_2n^{-1}
 \end{align*}
 for all $i \geq 1, n\geq m$,
 where $D_1,D_2 <\infty$ are non-negative finite constants which can be chosen independently of $n$, and we have used $\theta_{n0} = \theta_n$. As $\theta_n$ is convergent, we then have
\begin{align*}
    \sup_{n\geq m}\sup_{i \geq 1} E\left(\left\|Z_{ni}\right\|^4\right) < \infty,
\end{align*}
which implies $\left( Z^2_{ni}(t): i \geq 1, n\geq m \right)$ is uniformly integrable for each $t \in \mathbb{R}^p$ following the same argument as in Proposition \ref{prop:Z4_mv}.
\end{proof}

Finally, we require an extension of the continuous differentiability condition from Assumption \ref{as:bvm}, which is new for the regression setting due to the estimation of $\widehat{\mathcal{I}}_n(\cdot)$.
\begin{assumption}\label{app:as:bvm_reg2} 
      Suppose there exists a neighbourhood $U$ of $\theta^*$ such that for all $\theta \in U$, we have that $\mathcal{I}(\theta)^{-1}$ is continuously differentiable
      and
      \begin{align*}
          \sup_{\theta \in U} \, \rho\left\{\widehat{\mathcal{I}}_n(\theta)^{-1} - \mathcal{I}(\theta)^{-1}\right\} \to 0
      \end{align*}
      as $n \to \infty$, where $\rho(A)$ is the spectral radius of the matrix $A$. Furthermore, assume the conditions of Proposition \ref{prop:Z4_reg} hold.
\end{assumption}

We are now ready to state the main result for the regression setting. 
\begin{theorem}\label{th:bvm_as_reg}
     Let $(\theta_n)_{n \geq 1}$ be a deterministic sequence which converges to $\theta^*$ as $n \to \infty$. Under Assumptions \ref{app:as:bvm_reg} and \ref{app:as:bvm_reg2}, we have
    \begin{align*}
       r_n^{-1}\left(\theta_{n\infty} - \theta_n\right) \to \mathcal{N}\left\{0,\mathcal{I}(\theta^*)^{-1}\right\}
    \end{align*}
    in distribution
    $P^*$-almost surely as $n \to \infty$, where $r_n^2 = {\sum_{i = n+1}^\infty i^{-2}}$.
\end{theorem}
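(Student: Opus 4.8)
The plan is to follow the multivariate Bernstein--von Mises argument (Theorem~\ref{th:bvm_as_mv}) essentially verbatim, reducing to a scalar statement through the Cram\'er--Wold device and then invoking the infinite martingale array central limit theorem (Theorem~\ref{app:th:array_clt_final}). Fix $t \in \mathbb{R}^p$ and set $\zeta_{ni}(t) = t^T \theta_{ni}$, so that $\left(\zeta_{ni}(t), \mathcal{F}_{ni}\right)_{i \geq 1}$ is a scalar martingale for each $n \geq m$ with increments $X_{ni}(t) = (n+i)^{-1} Z_{ni}(t)$, where $Z_{ni}(t) = t^T \widehat{\mathcal{I}}_n(\theta_{n,i-1})^{-1} s(\theta_{n,i-1}, \widetilde{Y}_{ni}; \widetilde{X}_{ni})$. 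The iterated-expectation computation in the regression setup yields the clean conditional variance $E\{Z_{ni}(t)^2 \mid \mathcal{F}_{n,i-1}\} = t^T \widehat{\mathcal{I}}_n(\theta_{n,i-1})^{-1} t$, since $\widetilde{X}_{ni} \sim p_n^x$. I would first record that the $P^*$-almost sure form of the claim reduces, exactly as in the proof of Theorem~\ref{th:bvm_as}, to the deterministic-sequence case by conditioning on the event $\{\theta_n \to \theta^*\}$; it therefore suffices to treat a deterministic $(\theta_n)$ converging to $\theta^*$, with the limiting information pinned down by $\mathcal{I}(\theta^*) = \lim_n \widehat{\mathcal{I}}_n(\theta_n)$ from Assumption~\ref{app:as:bvm_reg}.

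The one genuinely new step is the variance condition of Theorem~\ref{app:th:array_clt_final}, namely showing $r_n^{-2} V_n^2(t) \to t^T \mathcal{I}(\theta^*)^{-1} t$ and hence $V_n^2(t)/s_n^2(t) \to 1$, where $V_n^2(t) = \sum_{i=1}^\infty (n+i)^{-2}\, t^T \widehat{\mathcal{I}}_n(\theta_{n,i-1})^{-1} t$. Following Lemma~\ref{app:lem:var_bvm}, the task is to control $\sup_{i \geq 1} E|t^T \widehat{\mathcal{I}}_n(\theta_{ni})^{-1} t - t^T \mathcal{I}(\theta^*)^{-1} t|$, and I would split the integrand through the triangle inequality as
\begin{align*}
t^T \widehat{\mathcal{I}}_n(\theta_{ni})^{-1} t - t^T \mathcal{I}(\theta^*)^{-1} t = t^T\{\widehat{\mathcal{I}}_n(\theta_{ni})^{-1} - \mathcal{I}(\theta_{ni})^{-1}\}t + t^T\{\mathcal{I}(\theta_{ni})^{-1} - \mathcal{I}(\theta^*)^{-1}\}t.
\end{align*}
On the compact neighbourhood $U$ of $\theta^*$, the first bracket is bounded by $\|t\|^2 \sup_{\theta \in U}\rho\{\widehat{\mathcal{I}}_n(\theta)^{-1} - \mathcal{I}(\theta)^{-1}\}$, which vanishes uniformly in $i$ by Assumption~\ref{app:as:bvm_reg2}, while the second bracket is handled exactly as in Lemma~\ref{app:lem:var_bvm} using the $K$-Lipschitz property of $\mathcal{I}(\cdot)^{-1}$ on $U$ together with $\sup_{i} E\{(\theta_{ni}-\theta^*)^2\} \to 0$. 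Off $U$, I would bound the contribution by the uniform integrability of $(t^T \widehat{\mathcal{I}}_n(\theta_{ni})^{-1} t)_{i \geq 1, n \geq m}$ combined with the Markov bound $\sup_i P(\theta_{ni} \in U^c) \le r^{-2}\sup_i E\{(\theta_{ni}-\theta^*)^2\} \to 0$, which is the same tail argument as in Lemma~\ref{app:lem:var_bvm}.

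The remaining ingredients are the regression analogues of the supporting lemmas. Proposition~\ref{prop:Z4_reg} supplies uniform integrability of $(Z_{ni}^2(t))_{i \geq 1, n \geq m}$; together with $E\{Z_{ni}^2(t)\mid\mathcal{F}_{n,i-1}\} = t^T \widehat{\mathcal{I}}_n(\theta_{n,i-1})^{-1} t$ this yields both $s_n^2(t) \to 0$ and the uniform integrability of $(t^T \widehat{\mathcal{I}}_n(\theta_{ni})^{-1} t)$ (the regression versions of Lemmas~\ref{app:lem:sn_UI} and~\ref{app:lem:L2}, so in particular $\sup_i E\{(\theta_{ni}-\theta^*)^2\}\to 0$). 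The Lindeberg condition~(\ref{app:infin_array_as4}) then follows verbatim from Lemma~\ref{app:lem:lind}, with $Z_i^2$ replaced by $Z_{ni}^2(t)$ and $s_N^2$ by $s_n^2(t)$, using only uniform integrability and $r_n^{-2}s_n^2(t)\to t^T\mathcal{I}(\theta^*)^{-1}t$. Theorem~\ref{app:th:array_clt_final} then gives $r_n^{-1} t^T(\theta_{n\infty}-\theta_n) \to \mathcal{N}\{0, t^T \mathcal{I}(\theta^*)^{-1} t\}$ for each $t$, and the ordinary Cram\'er--Wold device upgrades this to the stated $p$-dimensional limit.

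The main obstacle is the variance step: unlike the i.i.d.\ case, one must control two approximations simultaneously and uniformly over the column index $i$---the deterministic convergence $\widehat{\mathcal{I}}_n \to \mathcal{I}$ on the neighbourhood (Assumption~\ref{app:as:bvm_reg2}) and the uniform-in-$i$ $L^2$ convergence $\theta_{ni} \to \theta^*$---and make the compact/tail split interact correctly with both. Keeping the spectral-radius bound uniform in $i$ on the event $\{\theta_{ni}\in U\}$ while simultaneously using the uniform integrability to dispatch $\{\theta_{ni}\in U^c\}$ is the delicate bookkeeping; everything else is a transcription of the unconditional arguments already established for the multivariate case.
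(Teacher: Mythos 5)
Your proposal is correct and follows essentially the same route as the paper: reduce to a scalar martingale array via Cram\'er--Wold, note that the only genuinely new step is the variance condition, and handle it by the compact/tail split combined with the triangle-inequality decomposition of $\widehat{\mathcal{I}}_n(\theta_{ni})^{-1}-\mathcal{I}(\theta^*)^{-1}$ into an estimation error controlled by the uniform spectral-radius convergence of Assumption \ref{app:as:bvm_reg2} and a continuity error controlled by the Lipschitz property of $\mathcal{I}(\cdot)^{-1}$ near $\theta^*$. The remaining ingredients (Proposition \ref{prop:Z4_reg}, the regression analogues of Lemmas \ref{app:lem:sn_UI}--\ref{app:lem:var_bvm}, and the Lindeberg condition) are invoked exactly as in the paper's argument.
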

\begin{proof}
    The only deviation to the non-regression case will be when verifying  the variance condition (\ref{app:infin_array_as5}), specifically Lemma \ref{app:lem:var_bvm}. Consider the univariate case for now. For some $r \in \mathbb{R}^+$, consider a compact interval $ I = [\theta^* - r, \theta^*+r]$ where $I \subset U$. 
Now consider the term
\begin{align*}
    \sup_{i \geq 1} E\left\{\left| \widehat{\mathcal{I}}_n(\theta_{ni})^{-1} - \mathcal{I}(\theta^*)^{-1}\right|\right\} &\leq  \sup_{i \geq 1} E\left\{\left| \widehat{\mathcal{I}}_n(\theta_{ni})^{-1} - \mathcal{I}(\theta^*)^{-1}\right|\mathbbm{1}\left(\theta_{ni} \in I\right)\right\}\\
    &+ \sup_{i \geq 1} E\left\{\left| \widehat{\mathcal{I}}_n(\theta_{ni})^{-1} - \mathcal{I}(\theta^*)^{-1}\right|\mathbbm{1}\left(\theta_{ni} \in I^C\right)\right\},
\end{align*}
where we have split the expectation into $\theta_{ni}$ lying inside or outside $I$. 

The second term in the above can be handled as before. Since $\left(Z_{ni}^2\right)_{i \geq 1, n \geq m}$ is in $L_1$, and $E\left[Z_{ni}^2 \mid \mathcal{F}_{n,i-1} \right] = \widehat{\mathcal{I}}_n(\theta_{n,i-1})^{-1}$, we have that $(\widehat{\mathcal{I}}_n(\theta_{n,i-1})^{-1})_{i \geq 1, n \geq m}$ is uniformly integrable.\vspace{2mm}

For the first term, the condition that we want is 
\begin{align*}
     \sup_{i \geq 1} E\left\{\left| \widehat{\mathcal{I}}_n(\theta_{ni})^{-1} - \mathcal{I}(\theta^*)^{-1}\right|\mathbbm{1}\left(\theta_{ni} \in I\right)\right\} \leq K \sup_{i \geq 1} E\left(|\theta_{ni}- \theta^*|\right)\to 0. 
\end{align*}
A sufficient condition for the above is that for $\theta_{ni}\in I$, the neighborhood $I$ satisfies
\begin{align*}
    \left| \widehat{\mathcal{I}}_n(\theta_{ni})^{-1} - \mathcal{I}(\theta^*)^{-1}\right| \leq K \left|\theta_{ni} - \theta^* \right|
\end{align*}
for some finite constant $K$  for all $n \geq m$. To get a handle on this, the triangle inequality gives
\begin{align*}
    \left| \widehat{\mathcal{I}}_n(\theta_{ni})^{-1} - \mathcal{I}(\theta^*)^{-1}\right| \leq  \left| \widehat{\mathcal{I}}_n(\theta_{ni})^{-1} - {\mathcal{I}}(\theta_{ni})^{-1}\right| +  \left| {\mathcal{I}}(\theta_{ni})^{-1} - \mathcal{I}(\theta^*)^{-1}\right|.
\end{align*}
The second term can be controlled by the continuous differentiability of $\mathcal{I}(\theta)^{-1}$ within $I$ as before. The first term then satisfies
\begin{align*}
    \sup_{i \geq 1}\left| \widehat{\mathcal{I}}_n(\theta_{ni})^{-1} - {\mathcal{I}}(\theta_{ni})^{-1}\right| \to 0
\end{align*}
from the final part of Assumption \ref{app:as:bvm_reg2}.
This then gives
\begin{align*}
     \sup_{i \geq 1} E\left\{\left| \widehat{\mathcal{I}}_n(\theta_{ni})^{-1} - \mathcal{I}(\theta^*)^{-1}\right|\mathbbm{1}\left(\theta_{ni} \in I\right)\right\} \to 0. 
\end{align*}
Finally, this gives
\begin{align*}
     \sup_{i \geq 1} E\left\{\left| \widehat{\mathcal{I}}_n(\theta_{ni})^{-1} - \mathcal{I}(\theta^*)^{-1}\right|\right\} \to 0
\end{align*}
as $n \to \infty$ as desired. The remainder of the proof follows as in Theorem \ref{th:bvm}, where we can show a regression version of Lemma \ref{app:lem:var_bvm}. The Lindeberg condition then follows from the uniform integrability of $\left(Z_{ni}^2\right)_{i \geq 1, n \geq m}$.\vspace{1mm}

The multivariate case is a simple extension of the above. For each $t \in \R$:
\begin{align*}
  \sup_{i \geq 1}  E\left[\left|t^T \left\{\widehat{\mathcal{I}}_n(\theta_{ni})^{-1} -\mathcal{I}(\theta^*)^{-1} \right\}t\right|\right] &\leq   \sup_{i \geq 1}  E\left[\left|t^T \left\{\widehat{\mathcal{I}}_n(\theta_{ni})^{-1} -\mathcal{I}(\theta^*)^{-1} \right\}t\right|\mathbbm{1}\left(\theta_{ni} \in I\right)\right]\\
  &+   \sup_{i \geq 1}  E\left[\left|t^T \left\{\widehat{\mathcal{I}}_n(\theta_{ni})^{-1} -\mathcal{I}(\theta^*)^{-1} \right\}t\right|\mathbbm{1}\left(\theta_{ni} \in I^C\right)\right].
\end{align*} 
The second term is as before, controlled by $\left(Z^2_{ni}(t)\right)_{i \geq 1, n \geq 1}$ being uniformly integrable.\vspace{1mm}

For the first term, we consider $\theta_{ni} \in I$ and
\begin{align*}
    \left|t^T\left\{ \widehat{\mathcal{I}}_n(\theta_{ni})^{-1} - \mathcal{I}(\theta^*)^{-1} \right\} t\right| \leq  \left|t^T\left\{ \widehat{\mathcal{I}}_n(\theta_{ni})^{-1} - {\mathcal{I}}(\theta_{ni})^{-1}\right\}t\right| +  \left|t^T\left\{ {\mathcal{I}}(\theta_{ni})^{-1} - \mathcal{I}(\theta^*)^{-1}\right\}t\right|.
\end{align*}
The second term is controlled by the continuous differentiability of $t^T\mathcal{I}(\theta)^{-1}t$. We then use Assumption \ref{app:as:bvm_reg2} for the remaining first term:
\begin{align*}
    \sup_{i \geq 1}\left|t^T\left\{ \widehat{\mathcal{I}}_n(\theta_{ni})^{-1} - {\mathcal{I}}(\theta_{ni})^{-1}\right\}t\right|  \leq \|t\|^2\sup_{i \geq 1}\,  \rho\left\{\widehat{\mathcal{I}}_n(\theta_{ni})^{-1} - {\mathcal{I}}(\theta_{ni})^{-1}\right\} \to 0,
\end{align*}
where $\rho(A)$ is the spectral radius of $A$.
The above follows from a Rayleigh quotient argument as the difference of symmetric matrices is still symmetric (although not necessarily positive definite). Again, the rest of the proof is identical to that of Theorem \ref{th:bvm_as_mv} based on the uniform integrability of $\left(Z_{ni}^2(t)\right)_{i \geq 1, n \geq m}$.
\end{proof}

\setcounter{equation}{0}
\setcounter{theorem}{0}
\setcounter{proposition}{0}
\setcounter{lemma}{0}
\setcounter{corollary}{0}
\setcounter{assumption}{0}
\section{Examples}
\subsection{Introduction}
In this section, we illustrate the verification of the assumptions for a few multivariate and regression examples, complementing the univariate examples in the main paper. In general for the multivariate case, Assumption \ref{as:existence_mv} and the first part of Assumption \ref{as:bvm_mv} is easy to check, as it only depends on the Fisher information matrix which usually has a continuously differentiable inverse. The crux is often checking the uniform integrability condition (Assumption \ref{as:UI_mv} and the second part of Assumption \ref{as:bvm_mv}) using the fourth moment bound in  Proposition \ref{prop:Z4_mv}. 

For the regression examples, again Assumption \ref{as:existence_reg} is usually straightforward, with the conditions of Proposition \ref{prop:Z4_reg} being the crux. We will also have to put in more effort into checking Assumptions \ref{app:as:bvm_reg} and the first part of Assumption \ref{app:as:bvm_reg2} as they depend on the design matrix.
 
\subsection{Exponential example}\label{sec:expon}
Here, we verify the conditions for Proposition \ref{prop:Z4} for the exponential distribution example from the main paper. A standard calculation gives
\begin{align*}
    Z_N &= \theta_{N-1}^{2}\left(-\frac{1}{\theta_{N-1}} + \frac{1}{\theta_{N-1}^{2} }Y_N\right) = Y_N - \theta_{N-1}.
\end{align*}
We can then compute the 4th centralized moment of $Y_N$, which gives
\begin{align*}
    E\left(Z_N^4 \mid \mathcal{F}_{N-1}\right)  = 9\theta_{N-1}^4.
\end{align*}
We thus have the condition with $B = 0$.

\subsection{Normal unknown mean and variance example}\label{sec:normal_mv}
We illustrate the checking of the conditions for Proposition \ref{prop:Z4_mv} for the unknown mean and variance normal model, where $\theta = [\mu,\sigma^2]^T$. A standard calculation gives
\begin{align*}
    s(\theta,Y) &= \begin{bmatrix}
    \dfrac{Y - \mu}{\sigma^2} \vspace{1mm}\\
    -\dfrac{1}{2\sigma^2}+ \dfrac{(Y- \mu)^2}{2\sigma^4} \vspace{1mm}
    \end{bmatrix}, \quad 
\mathcal{I}(\theta) = \begin{bmatrix}
   \dfrac{1}{\sigma^2}\vspace{1mm} & \hspace{1mm}0\\
   0 &\dfrac{1}{2\sigma^4}\vspace{1mm}
\end{bmatrix},
\end{align*}
which then gives
\begin{align*}
    Z(\theta,Y) = \begin{bmatrix}
    {Y - \mu}\\
   (Y- \mu)^2- \sigma^2
    \end{bmatrix}\cdot
\end{align*}
The norm can be computed as
\begin{align*}
    \|  Z(\theta,Y)\|^4 &= \left[(Y - \mu)^2 + \left\{(Y- \mu)^2- \sigma^2\right\}^2\right]^2.
\end{align*}
Once again, we have 
\begin{align*}
    Q = \frac{(Y -\mu)^2}{\sigma^2}\sim \chi^2(1),
\end{align*}
which gives
\begin{align*}
    E_\theta\left( \|  Z(\theta,Y)\|^4\right)&= \sigma^4 E\left[\left\{Q + \sigma^2(Q-1)^2\right\}^2\right]=\sigma^4 \left(3 + 60 \sigma^4 + 20\sigma^2\right) \leq B + C(\sigma^2)^4 
\end{align*}
We have
\begin{align*}
    \|\theta\|^4 = \left\{\mu^2 + (\sigma^2)^2\right\} ^2 \geq (\sigma^2)^4.
\end{align*}
As a result, we have
\begin{align*}
      E_\theta\left( \|  Z(\theta,Y)\|^4\right)&\leq B + C \|\theta\|^4
\end{align*}
as desired for Proposition \ref{prop:Z4_mv}.\\
\subsection{Multivariate normal example}\label{sec:app_mvnormal}
The condition for Proposition \ref{prop:Z4_mv} can be similarly checked for the multivariate normal case as in Section \ref{sec:sim} of the main paper, although notation becomes much more involved. Consider the bivariate Gaussian case, where $\theta = [\mu_1, \mu_2, s_1, s_1, s_{12}]$ 
with the predictive as $\mathcal{N}(\mu, \Sigma)$
with $\mu = [\mu_1,\mu_2]^T$ and 
\begin{align*}
    \Sigma= \begin{bmatrix}
    s_1 & s_{12}\\
    s_{12} & s_2
\end{bmatrix}.
\end{align*}
This then gives
\begin{align}\label{eq:mv_normal_update}
    Z(\theta,Y) = \begin{bmatrix}
        Y_1- \mu_1\\
        Y_2- \mu_2\\
        (Y_1 -\mu_1)^2- s_1\\
        (Y_2 -\mu_2)^2- s_2\\
       (Y_1- \mu_1)(Y_2 - \mu_2) - s_{12}\\
    \end{bmatrix}, \quad
    \mathcal{I}(\theta)^{-1} = \begin{bmatrix}
        \Sigma & 0 \\
        0 & J
    \end{bmatrix}
\end{align}
where
\begin{align*}
    J = 2\begin{bmatrix}
        s_1^2 &s_{12}^2 & s_1 s_{12}\vspace{1mm}\\
        s_{12}^2 & s_2^2 & s_2 s_{12}\vspace{1mm}\\
        s_1 s_{12} \hspace{1mm}& s_2 s_{12}\hspace{1mm} & \frac{1}{2}{(s_{12}^2 + s_1 s_2)}
        \end{bmatrix}
\end{align*}
Although tedious, one can show that $E[\|Z(\theta,Y)\|^4]$ is a multivariate polynomial in $(s_1,s_2,s_{12})$ with degree 4. As a result, we have
\begin{align*}
    E[\|Z(\theta,Y)\|^4] \leq B_1 + C_1 s_1^4 + C_2 s_2^4 + C_3 s_{12}^4 \leq B + C \|\theta\|^4
\end{align*}
for some finite positive constants $B,C,B_1,C_1,C_2,C_3$, thereby satisfying the conditions for Proposition \ref{prop:Z4_mv}.

For the general case with $p>2$ dimensions, the update function has the same form, with
\begin{align*}
    Z_{\mu_j}(\theta,Y) = Y_j - \mu_j,\quad Z_{s_{jk}}(\theta,Y) = (Y_j - \mu_j)(Y_k - \mu_k) - s_{jk}.
\end{align*}
 for $j,k \in \{1,\ldots,p\}$, where $s_{jj} = s_j$ and $s_{kl} = s_{lk}$. The update vector is of length $ M+p $, where $M= p(p+1)/2$ is the number of unique entries in $\Sigma$.
 The Fisher information is the same form as in (\ref{eq:mv_normal_update}), though notation is a bit trickier now. 
The matrix $J$ is now of size $M \times M$.  
For entries corresponding to $(s_{jk}$, $s_{lm})$ with $j,k,l,m \in \{1,\ldots,p\}$ we have
$$J_{s_{jk},s_{lm}}= {s_{jm}s_{lk} + s_{jl} s_{mk}} \cdot$$
Again, we can extend the above proof to show that $E\left[\|Z(\theta,Y\|^4\right]$ is a degree 4 polynomial in $(s_{jk})_{1 \leq j,k \leq p}$ so again Proposition \ref{prop:Z4_mv} applies.

\subsection{Normal linear regression}\label{sec:norm_reg}
    We now verify Assumptions \ref{app:as:bvm_reg} and \ref{app:as:bvm_reg2} for  normal linear regression where for $y \in \mathbb{R}$ and $x \in \mathbb{R}^p$, our predictive is $p_\theta(y \mid x) = \mathcal{N}(y \mid \beta^T x, \sigma^2)$ with $\theta = [\beta, \sigma^2]^T$. The estimated Fisher information is
    \begin{align*}
    \widehat{\mathcal{I}}_n(\theta) = 
    \begin{bmatrix}\dfrac{\Sigma_{n,x}}{\sigma^2} & 0\\
    0 &\dfrac{1}{\sigma^4}
    \end{bmatrix}
    \end{align*}
where $\Sigma_{n,x} = n^{-1}\sum_{i = 1}^n X_i X_i^T$. Let $(\theta_n)_{n \geq 1}$ be a deterministic sequence converging to $\theta^*$, where ${\sigma^*}^2 > 0$. We omit the score function as it is a standard result. We now check Assumptions \ref{app:as:bvm_reg} and \ref{app:as:bvm_reg2}, which is enough for both the predictive and frequentist central limit theorems to hold.

We now make two reasonable assumptions on the sequence of design points.
\begin{assumption}\label{as:PSD_reg}
 The limit $\Sigma_x = \lim_{n\to \infty} \Sigma_{n,x}$ exists and is positive definite.
\end{assumption}
\begin{assumption}\label{as:bounded_reg}
The design values are bounded, that is $\sup_{n \geq 1}\|X_n\|^2 = K_{\text{max}} < \infty$.
\end{assumption}

 For Assumption \ref{app:as:bvm_reg}, we clearly have that $\widehat{\mathcal{I}}_n(\theta)$ is continuous in $\sigma^2$ for all $n$.  Furthermore, it is clear that  $\widehat{\mathcal{I}}_n(\theta)$ is positive definite for all sufficiently large $n$ from Assumption \ref{as:PSD_reg} for each value of $\sigma^2$. Finally, $\Sigma_{n,x}/{\sigma_n^*}^2$ converges as $\Sigma_{n,x}$ and $\sigma^2_n$ both converge, so the limit $\mathcal{I}(\theta^*) = \lim_{n\to \infty} \widehat{\mathcal{I}}_n(\theta_n)$ exists.  

 For Assumption \ref{app:as:bvm_reg2}, we first check the condition of Proposition \ref{prop:Z4_reg}. This is just an extension to Section \ref{sec:normal_mv}. One can verify that 
 \begin{align*}
     Z_n(\theta, Y; X) =\begin{bmatrix}
        (Y- \beta_n^T X)\, \Sigma_{n,x}^{-1}X \\
        \left(Y- \beta_n^T X\right)^2 - \sigma_n^2
     \end{bmatrix}
 \end{align*}
which gives
\begin{align*}
    \|Z_n(\theta,Y;X)\|^4 = \left\{(Y- \beta_n^T X)^2\, X^T\Sigma_{n,x}^{-2}X   + \left\{     \left(Y- \beta_n^T X\right)^2 - \sigma_n^2\right\}^2 \right\}^2
\end{align*}
We first upper bound $X^T\Sigma_{n,x}^{-2}X$ using Assumptions \ref{as:PSD_reg} and \ref{as:bounded_reg}. As the limit $\Sigma_x$ is positive definite, there exists $\delta > 0$ such that $\lambda_{\text{min}}(\Sigma_{n,x}) > \delta$ eventually, where $\lambda_{\text{min}}(A)$ is the minimum eigenvalue of the matrix $A$. This then gives
\begin{align*}
    X^T\Sigma_{n,x}^{-2}X \leq \frac{K_{\text{max}}}{\delta^2}
\end{align*}
eventually, so there exists a finite positive constant $L$ such that
\begin{align*}
    \|Z_n(\theta,Y;X)\|^4 = \left\{ L \varepsilon^2   + \left(     \varepsilon^2 - \sigma^2\right)^2 \right\}^2
\end{align*}
where $\varepsilon = (Y-\beta^T X) \sim \mathcal{N}(0,\sigma_n^2)$ under predictive resampling. We can thus carry out the exact same argument as Section \ref{sec:normal_mv}, which gives
\begin{align*}
    E\left[\|Z_n(\theta,Y;X)\|^4\right] \leq B + C\left(\sigma_n^2\right)^4 \leq B + C \|\theta\|^4
\end{align*}
for some finite positive constants $B, C$, where $L,B,C$ can all be chosen independently of $n$.

Finally, we want to check the uniform convergence condition, which is relatively straightforward. Pick a neighborhood $U = (\theta^* - r, \theta^* + r)$ for some small positive $r < {\sigma^*}^2$. We then have
\begin{align*}
    \sup_{\theta \in U} \|\widehat{\mathcal{I}}_n(\theta)^{-1} - \mathcal{I}(\theta)^{-1}\| \leq \left({\sigma^*}^2 + r\right){\rho\left(\Sigma_{n,x}^{-1} - \Sigma_{x}^{-1} \right)}\cdot 
\end{align*}
By Assumption \ref{as:PSD_reg}, we have that $\rho\left(\Sigma_{n,x}^{-1} - \Sigma_{x}^{-1} \right) \to 0$ from the continuity of the spectral radius, so $U$ satisfies Assumption \ref{app:as:bvm_reg2}. 

\subsection{Robust linear regression}\label{sec:app_robreg}
Consider the robust linear regression case as in Section \ref{sec:rwd}, where the model is $Y = \beta^T X + \tau \varepsilon$, where $\varepsilon$ is a standard Student's t-distribution with $\nu > 1$ degrees of freedom.
Our parameter of interest is then $\theta = [\beta,\tau^2]^T$, 
 where this is an extension of Example \ref{ex:t} and Section \ref{sec:norm_reg}. We compute the score function and Fisher information matrix:
\begin{align*}
\nabla_{\beta}{ \log p_\theta(Y \mid X)} = \frac{( \nu +1)( Y-\beta^T X) X}{\nu \tau^2 + (Y-\beta^T X )^2}, &\quad \frac{\partial \log p_\theta(Y \mid X)}{\partial \tau^2} = 
\frac{\nu\{(Y-\beta^T X)^2 - \tau^2\}}{2\tau^2 \left\{\nu \tau^2 + (Y- \beta^T X)^2\right\}}
\end{align*}
\begin{align*}
\widehat{\mathcal{I}}_n(\theta) &= \begin{bmatrix}
    \dfrac{(\nu + 1)\Sigma_{n,x}}{(\nu + 3)\tau^2} & 0\\
    0 & \dfrac{\nu}{2(\nu + 3)\tau^4}
\end{bmatrix}
\end{align*}
where $\Sigma_{n,x} = n^{-1}\sum_{i = 1}^n X_i X_i^T$ as before. 
This gives us  $Z_n(\theta,Y; X) = [Z_{\beta}(\theta,Y;X),Z_{\tau^2}(\theta,Y;X) ]^T$  with
\begin{align*}
Z_{\beta}(\theta,Y;X) &= \left\{ \frac{\tau^2(\nu + 3)(Y- \beta^T X)}{\nu \tau^2 + (Y-\beta^T X)^2}\right\}\Sigma_{n,x}^{-1}X,\\
Z_{\tau^2}(\theta,Y;X) &=  \frac{\tau^2(\nu + 3)\left\{\left(Y - \beta^T X\right)^2 - \tau^2\right\}}{\nu \tau^2 + \left(Y-\beta^T X\right)^2}
\end{align*}
as in the main paper. Again, we make Assumptions \ref{as:PSD_reg} and \ref{as:bounded_reg} and check Assumptions \ref{app:as:bvm_reg} and \ref{app:as:bvm_reg2}. Verifying Assumption \ref{app:as:bvm_reg}  is exactly the same as Section \ref{sec:norm_reg} so we omit this.
For Assumption \ref{app:as:bvm_reg2}, we first check the conditions for Proposition \ref{prop:Z4_reg}, where we want to upper bound
$E\left\{\|Z_n(\theta,Y;X)\|^4\right\}$. Conditional on $X$, we have that $R = (Y - \beta^T X)/ \tau \sim t(\nu)$, so we can write the above as
\begin{align*}
Z_{\beta}(\theta,Y;X) &= \left\{ \frac{\tau(\nu + 3)R}{ \nu  +R^2}\right\}\Sigma_{n,x}^{-1}X,\quad 
Z_{\tau^2}(\theta,Y;X) =  \frac{\tau^2(\nu + 3)\left(R^2 - 1\right)}{\nu  +R^2}
\end{align*}

We will have the following useful properties for $\nu > 1$:
\begin{align*}
   \left( \frac{R}{\nu + R^2}\right)^2 \leq \frac{1}{4\nu}, \quad 
   \left( \frac{R^2 -1}{\nu + R^2}\right)^2 \leq 1.
\end{align*}
The above bounds are simpler than the normal case. This then gives us
\begin{align*}
    \|Z_n(\theta,Y;X)\|^4 & = \left\{\|Z_{\beta}(\theta, Y;X)\|^2 + Z_{\tau^2}(\theta,Y;X)^2\right\}^2\\
    &\leq  (\nu + 3)^4\tau^4\left\{\frac{\|\Sigma_{n,x}^{-1}X\|^2}{4\nu} + \tau^2 \right\}^2\\
    &=  (\nu + 3)^4 \tau^4\left\{\frac{\|\Sigma_{n,x}^{-1}X\|^4}{16\nu^2} + \tau^4 + \frac{\tau^2\|\Sigma_{n,x}^{-1}X\|^2}{2\nu} \right\}.
\end{align*}
Once again, Assumptions \ref{as:PSD_reg} and \ref{as:bounded_reg} give us
\begin{align*}
\|\Sigma_{n,x}^{-1}X \|^2 = X^T \Sigma_{n,x}^{-T} \Sigma_{n,x}^{-1} X \leq \frac{\|X\|^2} {\lambda_{\text{min}}(\Sigma_{n,x})^2} \leq \frac{K_{\text{max}}}{\delta^2},
\end{align*}
for sufficiently large $n$. Since the above constants are chosen independently of $n$, we have that 
there exists positive $B,C < \infty$ such that
\begin{align*}
E\left\{ \|Z_n(\theta,Y;X)\|^4\right\} 
&\leq B + C (\tau^2)^4\leq B + C \|\theta\|^4
\end{align*}
for all $n$. For the uniform convergence condition, the argument is exactly the same as Section \ref{sec:norm_reg} with some extra scaling constants that depend on $\nu$.

\subsection{Logistic regression}\label{sec:app_logreg}

    For linear logistic regression, a standard calculation gives
    \begin{align*}
        s(\theta,Y; X) = \{Y - \sigma(\theta^{T}X )\}X, \quad \mathcal{I}(\theta; X) = \sigma(\theta^T X)\{1 - \sigma(\theta^T X)\}XX^T
    \end{align*}
    where $\sigma(\cdot)$ is the sigmoid function, and  we assume $X$ contains an intercept term as well. However, the Hessian term is challenging to bound as it grows exponentially with $\theta^T X$, so we instead define
    \begin{align*}
        {\mathcal{I}_{\kappa}}(\theta;X) = \max\{\sigma(\theta^T X)\{1 - \sigma(\theta^T X)\}, \kappa\} X X^T
    \end{align*}
    for some $\kappa > 0$. We then define
    \begin{align*}
        \widehat{\mathcal{I}}_{n,\kappa}(\theta) = \frac{1}{n}\sum_{i = 1}^n \mathcal{I}_{\kappa}(\theta;X_i).
    \end{align*}
We then have the update term as $Z(\theta,Y;X) = \widehat{\mathcal{I}}_{n,\kappa}(\theta)^{-1} s(\theta, Y;X)$.
This is still a martingale, as we are free to pick the preconditioner as long as it is $\mathcal{F}_{N}$-measurable. 
Again, we assume Assumptions \ref{as:PSD_reg} and \ref{as:bounded_reg}.

Noting that $|Y- \sigma(\theta^T X)|\leq 1$, we have
\begin{align*}
      \|\widehat{\mathcal{I}}_{n,\kappa}(\theta)^{-1} s(\theta, Y;X)\|^4 \leq \frac{\|s(\theta,Y;X)\|^4}{\kappa^4 \lambda_{\text{min}} \left(\Sigma_{n,x}\right)^4}
      \leq \frac{K_{\text{max}}^4}{\kappa^4 \delta ^4}< \infty
\end{align*}
for all $n \geq m$. This is finite with all constants independent of $n$, so we can automatically apply Proposition \ref{prop:Z4_reg} with $C = 0$. We suspect that Theorem \ref{th:bvm_as_reg} can be extended to this case as well, provided the truncation by $\kappa$ does not happen as $n$ increases.

\setcounter{table}{0}
\setcounter{figure}{0}

\section{Additional Experiments}
In this section, we provide additional experimental results for the simulation and real data analysis.

\subsection{Simulation}

In this section, we introduce additional results for the simulation from the main paper. Table \ref{tab2_supp} shows the additional results for $(\mu_2,s_2)$ for the Bayesian and martingale posterior, which match that of Table \ref{tab2} in the main paper as expected.

\begin{table}
    \caption{Coverage and average length of 95\% credible intervals of traditional Bayesian posterior (Markov Chain Monte Carlo) and the parametric martingale posterior (hybrid predictive resampling) on  different sample sizes \vspace{2mm}}
    \small
    \center
{   \begin{tabular}{ccrrcrrcrr}
        && \multicolumn{2}{c}{$n=20$} & &\multicolumn{2}{c}{$n=100$}&&\multicolumn{2}{c}{$n=500$}\vspace{2mm} \\
        Method& Parameter & Cov & Length&&Cov & Length && Cov & Length \\ 
        \multirow{5}{*}{Bayesian posterior}&$\mu_2$ & 94.2& 6.5&& 92.8 & 2.8&& 94.2&1.2\\ 
& $s_2$& 95.2 & 7.7 && 95.4 &2.9&& 93.4 & 1.3\\[5pt]
   &{Max. SE} & 1.0& 0.1 && 1.1 & 0.02 && 1.1 &0.004\\[2pt]
   & Run-time &   \multicolumn{2}{c}{1.2s}&&\multicolumn{2}{c}{3.7s}&& \multicolumn{2}{c}{14.7s} \\[7pt]
        \multirow{5}{*}{Martingale posterior} 
  & $\mu_2$ & 92.8 & 6.1&& 93.5 & 2.7&& 94.7& 1.2\\ 
    & $s_2$ &  90.4 & 5.8&& 94.0& 2.7&& 94.5& 1.2\\ [5pt]
      &{Max. SE} & 0.4& 0.03  && 0.3 &0.01 &&   0.3 & 0.001\\[2pt]
         & Run-time &   \multicolumn{2}{c}{0.003s} && \multicolumn{2}{c}{0.003s}&& \multicolumn{2}{c}{0.003s} \vspace{2mm}\\
         \multicolumn{10}{l}{\footnotesize Cov, coverage;  SE, standard error. Coverage is in \%; length has been multiplied by 10. }\\
         \multicolumn{10}{l}{\footnotesize Run-time is per sample; results are over 500 and 5000 repeats for Bayes and the martingale}\\
         \multicolumn{10}{l}{\footnotesize posterior respectively.}
    \end{tabular}}
    \label{tablelabel}
    \label{tab2_supp}
\end{table}

\subsection{Real data analysis}\label{sec:app_rwd}
In this section, we include additional results and details for the real world data example. Table \ref{tab_cov} outlines the baseline covariates and variable types for the real data example. Figure \ref{fig:samp_aids} compares the posterior 95\%
probability contours of the coefficients for treatment arms (2) and (3) for the martingale posterior with the exact predictive resampling ($N = n+50000$), truncated predictive resampling ($N = n+100$) and the hybrid predictive resampling scheme. The 95\% contours are estimated using bivariate kernel density estimation.
We see that the truncated scheme is too narrow as before, with the exact and hybrid schemes agreeing on the contour and marginal plots.
\begin{table}
    \caption{Covariates for the AIDS Clinical Trials Group Study 175 dataset. }
    \center
    \small
{   \begin{tabular}{cl}
        Covariate &  Variable Type (Units/values) \vspace{1mm}\\
  \hspace{10mm}Age\hspace{10mm} & Continuous (Years) \hspace{10mm}\\
  Weight & Continuous (kg)\\
  Karnofsky Score &Continuous (0-100)\\
  CD4 count at BL&Continuous (cells/$\text{mm}^3$)\\
CD8 count at BL& Continuous (cells/$\text{mm}^3$)\vspace{1mm}\\
Sex & Binary (M/F)\\
Race & Binary (White/Non-white)\\
Homosexual activity & Binary (Y/N)\\
Symptomatic & Binary (Y/N)\\
Hemophilia & Binary (Y/N)\\
IV drug use history & Binary (Y/N)\\
Antiretroviral history & Binary (Y/N)\vspace{1mm}\\
Treatment Arm & Categorical (1-4) \vspace{5mm}\\
      \multicolumn{2}{l}{\footnotesize   BL, baseline; IV, intravenous. Outcome is CD4 count} \\
      \multicolumn{2}{l}{\footnotesize (continuous; cells/$\text{mm}^3$) at $20 \pm 5$ weeks. }
    \end{tabular}}
    \label{tab_cov}
\end{table}
  \begin{figure}
  \center
\includegraphics[width=0.55\textwidth]{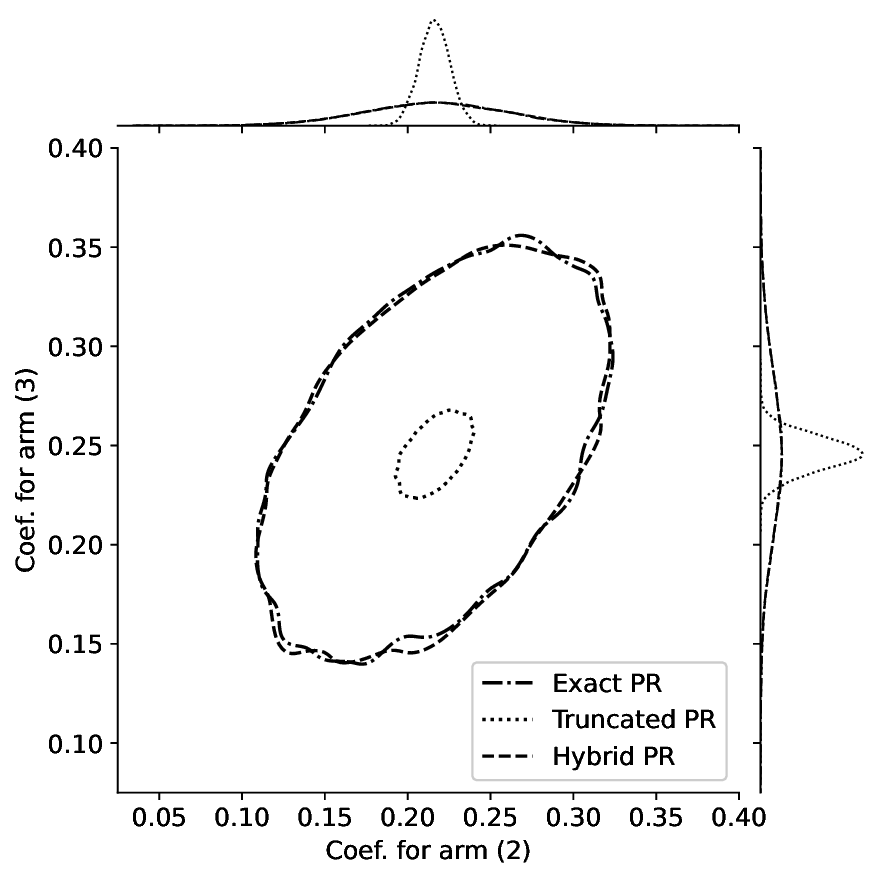}\vspace{2mm}
\caption{Posterior 95\% probability contour of the coefficients for treatment arms (2) and (3)
from the martingale posterior with exact sampling with $N = n + 50000$ (dot-dashes), hybrid sampling with $N = n+100$ (dashes) and truncation of $N = n+100$ (dot). }
\label{fig:samp_aids}
\end{figure}

\end{appendices}

\end{document}